\newtheorem{Thm}{Theorem}[section]
\newtheorem{Cor}[Thm]{Corollary}
\newtheorem{Lem}[Thm]{Lemma}
\newtheorem{Prop}[Thm]{Proposition}
\theoremstyle{definition}
\newtheorem{Def}[Thm]{Definition}
\newtheorem{Exm}[Thm]{Example}
\newtheorem{Rk}[Thm]{Remark}
\def \ZZ{{\mathbb{Z}}}
\def \QQ{{\mathbb{Q}}}
\def \RR{{\mathbb{R}}}
\def \CC{{\mathbb{C}}}
\def \FF{{\mathbb{F}}}
\def \PP{{\mathbb{P}}}
\def \TTT{{\mathcal{T}}}
\def \qq{{\mathfrak q}}
\def\det{\mathop{\mathrm{det}}}
\def\Ell{\mathop{\mathrm{Ell}}}
\def\mod{\mathop{\mathrm{mod}}}
\def\vert{\mathop{\mathrm{vert}}}
\def\edge{\mathop{\mathrm{edge}}}
\def\star{\mathop{\mbox{\Large$*$}}}
\def\Cl{\mathop{\mathrm{Cl}}}
\def\Cusp{\mathop{\mathrm{Cusp}}}
\def\Gamma{\varGamma}
\def\Delta{\varDelta}
\begin{document}

\begin{center}
\bf{\LARGE{Quasi-inner automorphisms of Drinfeld modular groups}}
\end{center}

\title[Quasi-inner automorphisms]{{}}
\author[A.~W.~Mason and A. Schweizer]{A.~W.~Mason and Andreas Schweizer}

\address{A.~W.~Mason, 
Department of Mathematics, University of Glasgow,
Glasgow G12 8QW, Scotland, U.K.}
\email{awm@maths.gla.ac.uk}

\address{Andreas Schweizer,
Am Felsenkeller 61,
78713 Schramberg, 
Germany}

\begin{abstract}
 Let $A$ be the set of elements in an algebraic function field $K$ over $\FF_q$ which are integral outside a fixed place $\infty$. Let $G=GL_2(A)$ be a {\it Drinfeld modular group}. The normalizer of $G$ in $GL_2(K)$,
 where $K$ is the quotient field of $A$, gives rise to automorphisms of $G$, which we refer to as {\it quasi-inner}. Modulo the inner automorphisms of $G$ they form a group $Quinn(G)$ which is isomorphic to $\Cl(A)_2$, the $2$-torsion in the ideal class group $\Cl(A)$. \\
 \noindent The group $Quinn(G)$ acts on all kinds of objects associated with $G$. For example, it acts freely on the cusps and elliptic points of $G$. If $\TTT$ is the associated Bruhat-Tits tree the elements of $Quinn(G)$ induce non-trivial automorphisms of the quotient graph $G \backslash \TTT$, generalizing an earlier result of Serre. It is known that the ends of $G \backslash \TTT$ are in one-one correspondence with the cusps of $G$. Consequently $Quinn(G)$ acts freely on the ends. In addition $Quinn(G)$ acts transitively on the those ends which are in one-one correspondence with the vertices of $G \backslash \TTT$ whose stabilizers are isomorphic to $GL_2(\FF_q)$.\\ \\
{\bf 2020 Mathematics Subject Classification:} 11F06, 20E08, 20E36, 20G30 
\\ \\
{\bf Keywords:} Drinfeld modular group; quasi-inner automorphism; elliptic point; cusp; quotient graph
\end{abstract}

\maketitle 

\section{Introduction}

Let $K$ be an algebraic function field of one variable with constant
field $\FF_q$, the finite field of order $q$. Let $\infty$ be a
fixed place of $K$ and let $\delta$ be its degree. The ring $A$ of
all those elements of $K$ which are integral outside $\infty$ is a Dedekind domain. Denote by $K_{\infty}$ the completion of $K$ with respect to 
$\infty$ and let $C_{\infty}$ be the $\infty$-completion of an algebraic closure of $K_{\infty}$. The group $GL_2(K_{\infty})$ (and its subgroup $G=GL_2(A)$) act as M\"obius transformations on $C_{\infty}$, 
$K_{\infty}$ and hence $\Omega = C_{\infty} \backslash K_{\infty}$,
the \it Drinfeld upper halfplane. \rm   This is part of a far-reaching analogy, initiated by Drinfeld \cite{Drinfeld}, where $\QQ, \RR, \CC$ are replaced by $K,K_{\infty}, C_{\infty}$, respectively. The roles of the classical upper half plane (in $\CC$) and the classical modular group $SL_2(\ZZ)$ are assumed by $\Omega$ and $G$, repspectively. \\
\noindent Modular curves, that is quotients of the complex upper half plane by finite index subgroups of $SL_2(\ZZ)$, are an indispensable tool when proving deep theorems about elliptic curves. Of similar importance in the theory of Drinfeld $A$-modules of rank $2$ are \it Drinfeld modular curves, \rm which are (the ``compactifications" of) the quotient spaces $H \backslash \Omega$, where $H$ is a finite index subgroup of $G$. Consequently we refer to $G$ as a \it Drinfeld modular group. \rm \\
\noindent A complicating factor in this correspondence between $SL_2(\ZZ)$ and $G$ is that, while the genus of the former is zero, for different choices of $K$ and $\infty$ the genus of $G$ can take many values. The simplest case, where $K=\FF_q(t)$ and $A=\FF_q[t]$ (equivalently $g=0$ and $\delta=1$), has to date attracted most attention. \\
\noindent An element $\omega \in \Omega$  which is stabilized by a non-scalar matrix in $G$ is called {\it elliptic}. Let $E(G)$ be the set of all such elements. It is known \cite[p.50]{Gekeler} that $E(G) \neq \emptyset$ if and only if $\delta$ is odd. Clearly $G$ acts on $E(G)$ and the elements of the set of $G$-orbits, $\Ell(G)= G\backslash E(G)=\{G\omega:\omega \in E(G)\}$, are called the {\it elliptic points} of $G$. It is known \cite[p.50]{Gekeler} that $\Ell(G)$ is finite. \\
 \noindent In addition $G$ acts on $\PP^1(K)=K \cup \{\infty\}$. (Here, of course, $\infty$ refers to the one point
compactification of $K$.) We refer to the elements of $\PP^1(K)$ as {\it rational points}. For each finite index subgroup, $H$, of $G$ the elements of $\Cusp(H)=H\backslash \PP^1(K)$ are called the {\it cusps} of $H$. Since $A$ is a Dedekind domain it is well-known that $\Cusp(G)$ can be identified with $\Cl(A)$, the {\it ideal class group} of $A$. As M\"obius transformations $G$ acts without inversion on $\TTT$, the  Bruhat-Tits tree associated with $GL_2(K_{\infty})$ and the \it ends \rm of the quotient graph $G \backslash \TTT$ are determined by $\Cusp(G)$ \cite[Theorem 9, p.106]{Serre}.\\ 
\noindent Cusps and elliptic points are important for several reasons. If $H$ is a finite index subgroup of $G$, the quotient space $H \backslash \Omega$ will, after adding $\Cusp(H)$, be the $C_{\infty}$-analog of a compact Riemann surface, which is called the \it Drinfeld modular curve \rm associated with $H$. Moreover, in the covering of Drinfeld modular curves induced by the natural map $H \backslash \Omega \rightarrow G \backslash \Omega$ ramification can only occur above the cusps and elliptic points of $G$. Also, for (classical and Drinfeld) modular forms, analyticity at the cusps and elliptic points requires special care.\\

 \noindent This paper is a continuation and extension of \cite{MSelliptic} which is concerned with the elliptic points of $G$. There the starting point 
 \cite[p.51]{Gekeler} is the existence of a bijection between $\Ell(G)$
 and $\ker \overline{N}$, where $\overline{N}: \Cl(\widetilde{A}) \rightarrow \Cl(A)$ is the norm map and $\widetilde{A}=A.\FF_{q^2}$. It can be shown \cite{MSelliptic} that $\Cl(\widetilde{A})_2 \cap \ker \overline{N}$, the $2$-torsion subgroup of $\ker \overline{N}$, is in bijection with $\Ell(G)^= =\{G\omega: \omega \in E(G),\;G\omega=G\overline{\omega}\}$,  where $\overline{\omega}$, the \it conjugate \rm of $\omega$, is the image of $\omega$ under the Galois automorphism of $K.\FF_{q^2}/K$. (In \cite{MSelliptic} $\Ell(G)^=$ is denoted by $\Ell(G)_2$.) Here we show that, when $\delta$ is odd, $\Cl(A)_2$ and the $2$-torsion in $\ker \overline{N}$ are isomorphic. This is the starting point for this paper where the principal focus of attention is the group $\Cl(A)_2$ and its actions on various objects related to $G$. \it Unless otherwise stated results hold for all $\delta$. \rm \\
 
\noindent Let $g \in N_{GL_2(K)}(G)$, the normalizer of $G$ in $GL_2(K)$.
Then $g$, acting by conjugation, induces an automorphism $\iota_g$ of $G$ which we refer to as \it quasi-inner. \rm If $g \in G.Z(K)$ then  $\iota_g$ reduces to an inner automorphism. If $g \in N_{GL_2(K)}(G) \backslash G.Z(K)$ we call $\iota_g$ \it non-trivial. \rm  We denote the quotient group 
$N_{GL_2(K)}(G)/G.Z(K)$ by $Quinn(G)$. It is well-known \cite{Cremona} that $Quinn(G)$ is isomorphic to $\Cl(A)_2$. Hence $G$ has non-trivial quasi-inner automorphisms if and only if $|\Cl(A)|$ is \it even. \rm Now, as an element of $GL_2(K)$, $\iota_g$ acts as a M\"obius transformation on the rational points and elliptic elements of $G$, as well as $\TTT$. In particular $\overline{g(\omega)}=g(\overline{\omega})$. Since all of these actions are trivial for scalar matrices they extend to actions of $Quinn(G)$ on $\Cusp(G)$, $\Ell(G)$ and the quotient graph, $G \backslash \TTT$. In this paper we study of the (often surprising) properties of these actions.
\newline
\begin{Thm}\label{freeint}  $Quinn(G)$ acts freely on \\
(i) $\Cusp(G)$, \\
(ii) $\Ell(G)$ ($\delta$ odd).
\end {Thm}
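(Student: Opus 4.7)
\noindent\emph{Proof plan.} The plan is to realize each of the two actions of $Quinn(G)$ as translation within a finite abelian group, after which freeness on non-trivial elements is automatic.

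For (i), I would use the standard bijection $\Cusp(G)\longleftrightarrow\Cl(A)$ that sends the $G$-orbit of $[a:c]\in\PP^{1}(K)$ to the class $[aA+cA]$. Under the Cremona isomorphism $Quinn(G)\cong\Cl(A)_{2}$, a non-trivial class $[\aa]\in\Cl(A)_{2}$ is represented by a matrix $g\in N_{GL_{2}(K)}(G)$ constructed from an ideal $\aa$ of $A$ with $\aa^{2}$ principal. A direct verification then shows that $\iota_{g}$ acts on $\Cusp(G)$ via $[\mathfrak{b}]\mapsto[\aa\mathfrak{b}]$; as translation by a non-identity element of $\Cl(A)$ has no fixed point, the action is free.

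For (ii), I would use the bijection $\Ell(G)\longleftrightarrow\ker\overline{N}\subseteq\Cl(\widetilde{A})$ from \cite[p.51]{Gekeler}, which sends $G\omega$ to the class of an $\widetilde{A}$-ideal $\widetilde{\aa}_{\omega}$ extracted from the stabilizer $G_{\omega}$. Since $g$ normalizes $G$, the stabilizer of $g(\omega)$ is the conjugate $gG_{\omega}g^{-1}$, and a parallel ideal-theoretic computation yields $\widetilde{\aa}_{g(\omega)}=e([\aa])\cdot\widetilde{\aa}_{\omega}$, where $e\colon\Cl(A)\to\Cl(\widetilde{A})$ is extension of scalars. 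Hence $Quinn(G)$ acts on $\ker\overline{N}$ by translation through $e|_{\Cl(A)_{2}}$, whose image lies in the $2$-torsion because $\overline{N}\circ e$ is squaring on $\Cl(A)$. By the opening result of the paper, this restriction is an isomorphism $\Cl(A)_{2}\cong(\ker\overline{N})_{2}$ when $\delta$ is odd; in particular it is injective, so the action is free.

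The main obstacle is verifying that each action really is multiplication by the associated ideal class. For cusps this reduces to the routine identity $\aa\cdot(aA+cA)=a'A+c'A$ with $(a',c')^{t}=g(a,c)^{t}$, valid because of how the matrix representing $[\aa]$ is chosen. For elliptic points one must track how conjugation by $g$ affects the $\widetilde{A}$-order attached to an elliptic stabilizer and confirm that the induced change of ideal class is precisely $e([\aa])$. Once these two identifications are in hand, the theorem follows from elementary group theory combined with the injectivity of $e|_{\Cl(A)_{2}}$.
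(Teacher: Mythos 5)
Your plan is essentially the paper's own proof: both actions are identified with translation in an ideal class group (Theorem \ref{multiplicationcusp} for cusps, Theorem \ref{multiplicationelli} composed with Lemma \ref{2torsion} for elliptic points), after which freeness is immediate from the injectivity of $[\aa]\mapsto[\aa\widetilde{A}]$ on $\Cl(A)_2$ in case (ii) and from triviality in case (i). The one caveat is that the step you label routine --- the equality $\qq(M)\,(aA+cA)=a'A+c'A$, and its analogue $J_{M(\omega)}=\qq(M)J_{\omega}$ --- is precisely where the paper does its only real work: the inclusion $a'A+c'A\subseteq\qq(M)(aA+cA)$ is immediate from $a'=\alpha a+\beta c$, etc., but the reverse inclusion is not, and it does not depend on any special choice of representing matrix. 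The paper obtains it by writing $J_{M(c)}=\qq(M)J_c I_1$ with $I_1$ integral (Dedekind property), iterating to get $J_{M^2(c)}=\qq(M)^2J_cI_1I_2=\Delta J_cI_1I_2$, and comparing with $J_{M^2(c)}=\Delta J_c$, which holds because $\Delta^{-1}M^2\in SL_2(A)$ (Corollary \ref{M^2}(i), itself a consequence of $\qq(M)^2=(\Delta)$); this forces $I_1=I_2=A$. If you supply that argument (or an equivalent one), your proposal is complete and coincides with the paper's.
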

\noindent From the above it is clear that $Quinn(G)$ can be embedded as a subgroup $\Ell(G)^=$ (resp. $\Cl(A)_2$) of $\Ell(G)$ (resp. $\Cusp(G)$). We show that the action of $Quinn(G)$ is equivalent to multiplication by the elements of the subgroup. The ``freeness" in this result follows immediately.
Restricting to these subsets yields stronger results.

\begin{Cor}\label{freetransint} $Quinn(G)$ acts freely and transitively on \\ 
(i) $\Cl(A)_2$, \\ 
(ii) $\Ell(G)^=$ ($\delta$ odd). 

\end{Cor}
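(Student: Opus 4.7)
The plan is to leverage the observation stated just before the corollary: under the natural identifications $\Cusp(G) \leftrightarrow \Cl(A)$ and $\Ell(G)^= \leftrightarrow \Cl(\widetilde{A})_2 \cap \ker \overline{N}$, the action of $Quinn(G) \cong \Cl(A)_2$ on each of these sets is equivalent to multiplication (i.e.~translation) by elements of $\Cl(A)_2$. Once this translation description is in place, restricting the action to the subset corresponding to $\Cl(A)_2$ (respectively $\Ell(G)^=$) reduces the problem to the regular action of a finite abelian group on itself, which is automatically free and transitive. Thus Theorem \ref{freeint} would follow as a by-product, and the corollary is the strengthening we get by noticing that these subsets are invariant.

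For part (i), I would first recall the standard bijection $\Cusp(G) \leftrightarrow \Cl(A)$, sending $G \cdot (\alpha_1 : \alpha_2)$ to the ideal class of $A\alpha_1 + A\alpha_2$, and the isomorphism $Quinn(G) \cong \Cl(A)_2$ from \cite{Cremona}. The key technical step is to verify that a quasi-inner automorphism $\iota_g$ corresponding to a class $[\mathfrak{b}] \in \Cl(A)_2$ acts on $\Cusp(G) \cong \Cl(A)$ by $[\mathfrak{a}] \mapsto [\mathfrak{b}] \cdot [\mathfrak{a}]$. Once this is established, the subset $\Cl(A)_2 \subseteq \Cl(A)$ is invariant under $Quinn(G)$, and the induced action is precisely the regular action of $\Cl(A)_2$ on itself, which is free and transitive.

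Part (ii) is entirely analogous, but routed through the parameterization of elliptic points recalled in the introduction. When $\delta$ is odd we have $\Cl(A)_2 \cong \Cl(\widetilde{A})_2 \cap \ker \overline{N}$, and the latter is in bijection with $\Ell(G)^=$. I would check that $Quinn(G)$ preserves $\Ell(G)^=$ (this uses the compatibility $\overline{g(\omega)} = g(\overline{\omega})$ noted in the paper, so that $G\omega = G\overline{\omega}$ is stable under the $Quinn(G)$-action), and that through the above identifications the action is again translation by $\Cl(A)_2$ on itself.

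The main obstacle is the verification that under these identifications the quasi-inner action coincides with ideal-class multiplication. For cusps it reduces to tracking how a representative $g \in N_{GL_2(K)}(G)$ of $[\mathfrak{b}]$ transforms the fractional ideal $A\alpha_1 + A\alpha_2$; compatibility with the equivalence relations defining $\Cusp(G)$ and $\Cl(A)$ is where the normalizer condition (as opposed to mere membership in $GL_2(K)$) is essential. For elliptic points the verification goes through the parameterization via $\Cl(\widetilde{A})$ in \cite{MSelliptic}; one must check that ideal-class multiplication in $\Cl(\widetilde{A})$ by a class lifted from $\Cl(A)_2$ commutes with the Galois involution and so preserves $\ker \overline{N}$ and its $2$-torsion. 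With these compatibilities settled, the corollary follows immediately from the triviality that a group acts regularly on itself.
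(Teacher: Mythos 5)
Your route is the same as the paper's: establish that, under the identifications $\Cusp(G)\leftrightarrow\Cl(A)$ and $\Ell(G)\leftrightarrow\ker\overline{N}$, a quasi-inner automorphism represented by $M$ acts by multiplication by $[\qq(M)]$ (resp.\ $[\iota(\qq(M))]$) — this is Theorem \ref{multiplicationcusp} and Theorem \ref{multiplicationelli} — then identify $\Ell(G)^=$ with $\iota(\Cl(A)_2)\cong\Cl(A)_2$ (Lemma \ref{2torsion}, using that $\delta$ odd makes the canonical map $\Cl(A)\to\Cl(\widetilde{A})$ injective), and conclude by observing that the restricted action is the regular action of $\Cl(A)_2$ on itself. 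The one place where your plan is materially thinner than the paper is the step you describe as ``tracking how a representative $g$ transforms the fractional ideal'': direct computation only yields the containment $J_{M(c)}=(ax+by)A+(cx+dy)A\subseteq\qq(M)J_c$, i.e.\ $J_{M(c)}=\qq(M)J_cI_1$ for some integral ideal $I_1$, and a priori $[J_{M(c)}]$ could differ from $[\qq(M)][J_c]$. The paper closes this by iterating: applying the same containment twice gives $J_{M^2(c)}=\qq(M)^2J_cI_1I_2=\varDelta J_cI_1I_2$, while Corollary \ref{M^2}(i) (namely $\varDelta^{-1}M^2\in SL_2(A)$, itself a consequence of the normalizer criterion $\qq(M)^2=(\varDelta)$) gives $J_{M^2(c)}=\varDelta J_c$ outright, forcing $I_1=I_2=A$. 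You correctly sense that the normalizer condition is where the content lies, but without this squaring argument (or an equivalent local computation at each prime) the multiplication formula — and with it the freeness in both parts — is not yet established. The rest of your outline (invariance of $\Ell(G)^=$ via $\overline{g(\omega)}=g(\overline{\omega})$, the $2$-torsion identification for odd $\delta$, and the reduction to a regular action) matches the paper.
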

\begin{Cor}\label{4int}  When $\delta$ is odd $Quinn(G)$ acts freely on $\Ell(G)^{\neq}=\{G\omega:G\omega \neq G\overline{\omega}\}$. Moreover, if $\ker \overline{N}$ has no element of order $4$, then $Quinn(G)$ acts freely on $$\{\{G\omega,G\overline{\omega}\}: G\omega \in \Ell(G)^{\neq}\}.$$
\end{Cor}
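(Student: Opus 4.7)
My approach is: (i) is immediate from Theorem~\ref{freeint}(ii) once we observe that $\Ell(G)^{\neq}$ is $Quinn(G)$-stable, and (ii) reduces to an elementary group-theoretic calculation after transferring everything to the group $N:=\ker\overline{N}$.

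For (i), the identity $\overline{g(\omega)}=g(\overline{\omega})$ (recorded in the introduction for $g\in N_{GL_2(K)}(G)$) shows that the $Quinn(G)$-action commutes with conjugation, hence preserves both $\Ell(G)^=$ and its complement $\Ell(G)^{\neq}$. Since the action on all of $\Ell(G)$ is free by Theorem~\ref{freeint}(ii), the restriction to $\Ell(G)^{\neq}$ is free as well.

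For (ii), I would use the identifications already built up in the paper: $\Ell(G)\leftrightarrow N$ with $\Ell(G)^{=}\leftrightarrow N_2$, the action of $Quinn(G)\cong N_2$ being translation (this is the ``equivalent to multiplication'' statement from which Corollary~\ref{freetransint} is deduced), and the conjugation $G\omega\mapsto G\overline{\omega}$ corresponding to $\alpha\mapsto -\alpha$ on $N$, which follows from $\overline{N}(\alpha)=\alpha+\sigma(\alpha)=0$ on $N=\ker\overline{N}$. Under these, a pair $\{G\omega,G\overline{\omega}\}$ with $G\omega\in\Ell(G)^{\neq}$ becomes a pair $\{\alpha,-\alpha\}$ with $\alpha\in N\setminus N_2$, and $a\in N_2$ sends it to $\{\alpha+a,-\alpha+a\}$. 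Equality $\{\alpha+a,-\alpha+a\}=\{\alpha,-\alpha\}$ forces either $a=0$, or the swap $\alpha+a=-\alpha$, i.e.\ $a=-2\alpha$. In the latter case $a\in N_2$ gives $0=2a=-4\alpha$, so $\alpha$ has order dividing $4$; together with $\alpha\notin N_2$ this produces an element of order exactly $4$ in $\ker\overline{N}$, contradicting the hypothesis. Hence only $a=0$ fixes the pair, and the action is free.

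The only non-routine point I foresee is pinning down precisely that the geometric involution $\omega\mapsto\overline{\omega}$ corresponds to algebraic inversion on $N$; but this is essentially built into the bijection $\Ell(G)\leftrightarrow\ker\overline{N}$ from \cite{MSelliptic} (it is exactly what makes $\Ell(G)^{=}$ correspond to the $2$-torsion), so once it is cited cleanly the rest of (ii) is a two-line calculation.
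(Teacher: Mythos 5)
Your proposal is correct and takes essentially the same route as the paper: part (i) is the freeness of the action on $\Ell(G)$ (Corollary \ref{freeelli}) restricted to the $Quinn(G)$-stable subset $\Ell(G)^{\neq}$, and part (ii) is precisely the computation of Lemma \ref{order4}, which shows that a quasi-inner automorphism can send $G\omega$ to $G\overline{\omega}$ only when $[J_{\omega}]$ has order $4$ in $\ker\overline{N}$ (with $\iota(\qq(M))=[J_{\omega}]^{2}$). The only cosmetic difference is that you write $\ker\overline{N}$ additively where the paper argues multiplicatively with ideal classes.
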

\begin{Thm}\label{auto2int} Every non-trivial element of $Quinn(G)$ determines an automorphism of $G \backslash \TTT$ of order $2$ which preserves the structure of all its vertex and edge stabilizers.
\end{Thm}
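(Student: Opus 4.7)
\medskip

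\noindent\emph{Proof proposal.} Pick a representative $g \in N_{GL_2(K)}(G) \setminus G.Z(K)$ of a non-trivial class in $Quinn(G)$. As an element of $GL_2(K) \subseteq GL_2(K_\infty)$, $g$ acts as a simplicial automorphism of the Bruhat--Tits tree $\TTT$. Since $g$ normalizes $G$, we have $g \cdot Gv = G \cdot (gv)$ on vertices and similarly on edges, so $g$ descends to a well-defined graph automorphism $\overline{\iota_g}$ of $G \backslash \TTT$, with inverse induced by $g^{-1}$. This descended map depends only on the class of $g$ in $Quinn(G)$: any inner factor from $G$ is absorbed by the quotient, and any scalar factor from $Z(K)=K^{*}$ acts trivially on all of $\TTT$.

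For the order I would observe that, since $Quinn(G) \cong \Cl(A)_2$ is $2$-torsion, $g^2 \in G.Z(K)$; as both $G$ and $K^{*}$ act trivially on $G \backslash \TTT$, it follows that $\overline{\iota_g}^{\,2} = \id$, so the order divides $2$. To see that the order is exactly $2$, I would invoke the identification \cite[Theorem 9, p.~106]{Serre} of the ends of $G \backslash \TTT$ with $\Cusp(G)$. This bijection is canonically compatible both with the $GL_2(K_\infty)$-action on $\TTT$ and with the M\"obius action on $\PP^{1}(K)$, hence is $Quinn(G)$-equivariant. Theorem \ref{freeint}(i) then guarantees that $\overline{\iota_g}$ moves every cusp, so it moves every end, and in particular is not the identity on $G \backslash \TTT$.

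The stabilizer statement is then essentially a conjugation calculation. For any vertex $\tilde v$ of $\TTT$ lifting a vertex of $G \backslash \TTT$,
\[
 \mathrm{Stab}_{G}(g \tilde v) \;=\; g \, \mathrm{Stab}_{G}(\tilde v) \, g^{-1},
\]
and this conjugate lies in $G$ precisely because $g \in N_{GL_2(K)}(G)$. Thus the quasi-inner automorphism $\iota_g$ itself provides an explicit group isomorphism between the two vertex stabilizers; the identical argument applies verbatim to edge stabilizers. Hence the induced automorphism of $G \backslash \TTT$ not only permutes its vertices and edges but preserves the full stabilizer structure.

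The main obstacle I anticipate is the equivariance used in the non-triviality step. Serre's bijection between ends and cusps is given via a parametrization of ends by fixed rational points of unipotent subgroups in $GL_2(K_\infty)$, and one has to verify that this parametrization behaves correctly under conjugation by elements of $N_{GL_2(K)}(G)$. Once that compatibility is established, every remaining piece of the argument is routine bookkeeping about simplicial automorphisms and conjugation inside a normalizer.
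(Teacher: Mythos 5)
Your proposal is correct, and its descent step, the order-divides-$2$ step (via $g^2\in G.Z(K)$, i.e.\ Corollary \ref{M^2}(i)), and the stabilizer computation $G_{g(v)}=gG_vg^{-1}$ all coincide with what the paper does in Theorem \ref{quasiquot} and Corollary \ref{quasiquotcor}. Where you genuinely diverge is the non-triviality step. The paper argues directly on the tree: it picks a vertex $v_0$ with non-central $G_{v_0}\leq G(\infty)$, assumes $\overline{\iota_g}$ fixes $Gv_0$, reduces to an upper-triangular representative, and uses Theorem \ref{Misquasi} to force $g\in G.Z(K)$. You instead transport the question to the cusps via Serre's ends--cusps correspondence and invoke the freeness of the $Quinn(G)$-action on $\Cusp(G)$ (Theorem \ref{freeint}(i), proved in the paper as Corollary \ref{actiononcusps} from Theorem \ref{multiplicationcusp}, using only ideal arithmetic and Corollary \ref{M^2} -- so there is no circularity in quoting it here). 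Your route buys a stronger conclusion, namely that the induced automorphism moves \emph{every} cuspidal end (essentially Corollary \ref{cuspidalaction}), at the price of the equivariance of the ends--cusps bijection, which you rightly flag as the one point needing verification; it does hold, since the bijection of Theorem \ref{Serre} sends a cusp $c$ to the end of the projected ray converging to $c$, and $g$ carries a ray toward $c$ to a ray toward $g(c)$. The paper's route is more self-contained on the tree and, unlike yours, extends verbatim to the quotients $N\backslash\TTT$ for finite-index normal subgroups $N$ treated in Corollary \ref{quasiquotcor}, where the cusp bookkeeping would change.
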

\noindent Serre \cite[Exercise 2 e), p. 117]{Serre} states this result for the special case $K=\FF_q(t)$ with $\delta$ even. Our result shows that in general the quotient graph has symmetries of this type provided $|\Cl(A)|$ is even. (In general this restriction is necessary.)

\noindent We now list more detailed results on the action of $Quinn(G)$ on  $G \backslash \TTT$. Serre \cite[Theorem 9, p.106]{Serre} has described the basic structure of $G \backslash \TTT$. Its \it ends \rm (i.e. the equivalence classes of semi-infinite paths without backtracking) are in one-one correspondence with the elements of $\Cl(A)$. To date the only cases for which the precise structures of $G \backslash \TTT$ are known are $g=0$,
\cite{MasonJGT}, \cite{KMS}, and $g=\delta=1$, \cite{Takahashi}.
\begin{Thm}\label{endsint}   $Quinn(G)$ acts freely on the ends of $G \backslash \TTT$ and, in addition, transitively on
 the ends of $G \backslash \TTT$ corresponding to the elements of $\Cl(A)_2$,
\end{Thm}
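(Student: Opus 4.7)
The plan is to transport the known action of $Quinn(G)$ on $\Cusp(G)$ across Serre's bijection to the set of ends of $G \backslash \TTT$, and then read off both assertions from Theorem~\ref{freeint} and Corollary~\ref{freetransint}.

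First, I would recall Serre's explicit description of the correspondence between $\Cusp(G)$ and the ends of $G \backslash \TTT$: for each rational point $r \in \PP^1(K)$ there is a geodesic ray (half-line) $L_r$ in $\TTT$ whose tail converges to $r$ under the $K_\infty$-topology, and the image of $L_r$ in $G \backslash \TTT$ determines an end $e([r])$ depending only on the $G$-orbit $[r] \in \Cusp(G)$. By \cite[Theorem~9]{Serre} the map $[r] \mapsto e([r])$ is a bijection onto the set of ends. In particular, \emph{every} end of $G \backslash \TTT$ arises this way.

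Next I would check that this bijection is $Quinn(G)$-equivariant. Given $g \in N_{GL_2(K)}(G)$, the M\"obius action of $g$ on $\PP^1(K)$ sends $r$ to $g(r)$, and its action on $\TTT$ (which, by Theorem~\ref{auto2int}, descends to an automorphism of $G \backslash \TTT$) sends any half-line converging to $r$ to a half-line converging to $g(r)$. Hence $g \cdot L_r$ is equivalent (up to finite initial segment) to $L_{g(r)}$, so the induced automorphism of $G \backslash \TTT$ sends the end $e([r])$ to the end $e([g(r)])$. Since $\iota_g$ and the action on cusps depend only on the class of $g$ in $Quinn(G)$, this shows the bijection $\Cusp(G) \longleftrightarrow \{\text{ends}\}$ intertwines the two $Quinn(G)$-actions.

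With equivariance in hand, both conclusions are immediate. Freeness of the action on ends follows from Theorem~\ref{freeint}(i), which gives freeness of the action on $\Cusp(G)$. For the second statement, under the identification $\Cusp(G) \cong \Cl(A)$ the subset of $\Cl(A)_2$ corresponds exactly to a distinguished family of ends, and Corollary~\ref{freetransint}(i) already asserts that $Quinn(G) \cong \Cl(A)_2$ acts freely and transitively on $\Cl(A)_2$; transferring this through the equivariant bijection gives the claimed free transitive action on the corresponding ends. The only nontrivial point along the way is verifying that the graph automorphism provided by Theorem~\ref{auto2int} really does permute ends according to the rule $e([r]) \mapsto e([g(r)])$; this is the step I expect to require the most care, and it is handled by chasing a representative half-line $L_r$ through the M\"obius action of $g$ on $\TTT$ and using that $g$ normalizes $G$ so that passage to the quotient is compatible.
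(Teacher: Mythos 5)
Your proposal is correct and follows essentially the same route as the paper: in Section~\ref{sect:graph} the authors likewise transport the $Quinn(G)$-action on $\Cusp(G)$ through Serre's correspondence to the ends of $G\backslash\TTT$ and then invoke the freeness and free-transitivity results for cusps (Corollary~\ref{actiononcusps}, Theorem~\ref{transandfree}, Lemma~\ref{order4cusp}) to obtain Corollary~\ref{cuspidalaction}. The only cosmetic difference is in how the equivariance of the bijection is verified: you chase a boundary point $r\in\PP^1(K)$ under the M\"obius action, whereas the paper pins down where the induced graph automorphism sends the tail of a cuspidal ray by exploiting the strictly increasing vertex stabilizers along the lifts supplied by Theorem~\ref{Serre} --- the same key step, established by a slightly different bookkeeping.
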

\noindent  We show that the ends corresponding to $\Cl(A)_2$ are in one-one correspondence with those vertices whose stabilizers are isomorphic to $GL_2(\FF_q)$. (Each such vertex is ``attached" to the corresponding end.)  It is known \cite[Corollary 2.12]{MSstabilizer} that if $G_v$ contains a cyclic subgroup of order $q^2-1$ then $G_v \cong \FF_{q^2}^*\; \mathrm{or}\; GL_2(\FF_q)$. \\
\noindent The \it building map \rm \cite[p.41]{Gekeler} extends to a map $\lambda: \Ell(G) \rightarrow \vert(G \backslash \TTT)$. This map leads to another action of $Quinn(G)$ on the quotient graph.

\begin{Thm} (a) $Quinn(G)$ acts freely and transitively on 
$$\{\tilde{v} \in \vert(G\backslash \TTT): G_v \cong GL_2(\FF_q)\}.$$
(b) Suppose that $\delta$ is odd and that $\ker\overline{N}$ has no element of order $4$. Then $Quinn(G)$ acts freely on
$$\{\tilde{v} \in \vert(G\backslash \TTT): G_v \cong \FF_{q^2}^*\}.$$
\end{Thm}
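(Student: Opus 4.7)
The plan is to use the building map $\lambda \colon \Ell(G) \to \vert(G\backslash \TTT)$, together with the end-cusp correspondence of \cite[Theorem 9, p.~106]{Serre}, to transport the actions of $Quinn(G)$ on $\Cusp(G)$ and on $\Ell(G)$ (already studied in the paper) to the two vertex sets of interest, then invoke Theorem \ref{endsint} and Corollary \ref{4int} respectively.

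For part (a), I would apply the correspondence, announced in the paragraph preceding the theorem, between the ends of $G\backslash \TTT$ indexed by $\Cl(A)_2 \subseteq \Cl(A) = \Cusp(G)$ and the vertices $\tilde v$ with $G_v \cong GL_2(\FF_q)$, each such vertex being attached to its end as the outermost vertex on the corresponding infinite ray. This attachment is intrinsic to the graph $G\backslash \TTT$, so it is preserved by the graph automorphisms induced by $Quinn(G)$ via Theorem \ref{auto2int}. Hence the bijection is $Quinn(G)$-equivariant, and the free, transitive action on the $\Cl(A)_2$-ends from Theorem \ref{endsint} transfers directly to the vertex set in (a).

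For part (b), I would identify the set of $\FF_{q^2}^*$-vertices with $\{\{G\omega, G\overline\omega\}: G\omega \in \Ell(G)^{\neq}\}$ via $\lambda$. First, if $G_v \cong \FF_{q^2}^*$, any non-scalar element of $G_v$ has an elliptic fixed point $\omega \in E(G)$ with $v = \lambda(G\omega)$; part (a) has already absorbed the orbits in $\Ell(G)^=$, so necessarily $G\omega \in \Ell(G)^{\neq}$. Conversely, the non-split torus $\FF_{q^2}^* \subseteq G_v$ has exactly the conjugate pair $\{\omega, \overline\omega\}$ as its fixed points in $\Omega$, forcing $\lambda(G\omega) = \lambda(G\overline\omega)$, so $\lambda$ descends to a well-defined bijection
$$\{\{G\omega, G\overline\omega\}: G\omega \in \Ell(G)^{\neq}\} \;\longleftrightarrow\; \{\tilde v \in \vert(G\backslash \TTT): G_v \cong \FF_{q^2}^*\}.$$
Equivariance of this bijection follows from the identity $\overline{g(\omega)} = g(\overline\omega)$ noted in the introduction, combined with the fact that $\lambda$ intertwines the M\"obius action of $N_{GL_2(K)}(G)$ on $\Omega$ with its action on $\TTT$. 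Corollary \ref{4int} then yields freeness of the $Quinn(G)$-action on the vertex set in (b).

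The main obstacle is the vertex parametrization in (b): verifying that $\lambda$ collapses precisely the conjugation pairs within $\Ell(G)^{\neq}$ and no further. This requires a careful analysis of the action of the non-split torus $\FF_{q^2}^* \subseteq GL_2(K_\infty)$ on both $\Omega$ and $\TTT$, in particular that its unique fixed vertex coincides with $\lambda(G\omega)=\lambda(G\overline\omega)$ and that the $G$-orbits $G\omega$ and $G\overline\omega$ (rather than being further identified by some element of $G_v$) are distinguished in $\Ell(G)^{\neq}$ by the normalizer of this torus. Once this structural fact is pinned down, the freeness statements in (a) and (b) are immediate consequences of Theorem \ref{endsint} and Corollary \ref{4int}.
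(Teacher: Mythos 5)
Your treatment of part (b) follows essentially the same route as the paper: Section 4 establishes exactly the $Quinn(G)$-invariant bijection
$$\left\{\{G\omega,G\overline{\omega}\}: G\omega\in\Ell(G)^{\neq}\right\}\longleftrightarrow\left\{\tilde v\in\vert(G\backslash\TTT): G_v\cong\FF_{q^2}^*\right\}$$
via the building map, citing \cite[Proposition 3.4]{MSelliptic} for precisely the structural fact you flag as the main obstacle (that $\lambda$ collapses the conjugate pairs in $\Ell(G)^{\neq}$ and nothing more), and then Lemma \ref{order4} and Proposition \ref{2,3,odd}(c) give freeness exactly when $\ker\overline{N}$ has no element of order $4$. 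So (b) is sound modulo supplying that reference.

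Part (a), however, has a genuine circularity. The correspondence you invoke between the $\Cl(A)_2$-ends and the vertices with $G_v\cong GL_2(\FF_q)$ is Corollary \ref{Serrecor}(i), and the paper proves it \emph{from} Lemmas \ref{transGL} and \ref{freeGL}, which are the content of part (a) (= Theorem \ref{transandfree}); the logical order cannot be reversed. In particular, transitivity on the vertex set is the assertion that every subgroup of $G$ isomorphic to $GL_2(\FF_q)$ is conjugate to the standard copy by an element of $N_{\widehat{G}}(G)$ (Lemma \ref{transGL}, proved from the classification of stabilizers in \cite{MSstabilizer}); knowing that $Quinn(G)$ acts freely and transitively on the $\Cl(A)_2$-ends (Theorem \ref{endsint}, which the paper does obtain independently from the cusp action) tells you nothing about this until you have already shown that each such subgroup is ``attached'' to a $\Cl(A)_2$-end and that distinct vertices are attached to distinct ends — and that is exactly what Lemmas \ref{transGL} and \ref{freeGL} deliver. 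Your geometric picture is also off: the $GL_2(\FF_q)$-vertices do not lie on the cuspidal rays (the stabilizers along a ray form the increasing metabelian chain of Theorem \ref{Serre}); only for $\delta=1$ do they appear as spikes adjacent to a ray's terminal vertex, and in general the attachment of such a vertex to a cusp is made through the unipotent elements of its stabilizer, not through the graph structure alone. The paper's actual proof of (a) is direct: Lemma \ref{transGL} for transitivity, Lemma \ref{freeGL} (a matrix computation forcing any normalizing element that stabilizes the conjugacy class into $Z(K).G$) for freeness, and Corollary \ref{vertstab} (resting on Proposition \ref{noedgeGL}) to pass from conjugacy classes of such subgroups to vertices of $G\backslash\TTT$. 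Those three ingredients are missing from your argument and are not recoverable from the end action.
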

\noindent As an illustration of our results, especially the existence of reflective symmetries as in Theorem \ref{auto2int}, we conclude with diagrams of two examples of $G \backslash \TTT$ for each of which $g=\delta=1$, the so called ``elliptic" case. For these we make use of Takahashi's paper \cite{Takahashi}. Special features of these cases include the following. For part (i) see \cite[Theorem 5.1]{MSelliptic}.
\begin{Cor}\label{isolint} Suppose that $\delta=1$. \\ \\
(i) The isolated (i.e. (graph) valency $1$) vertices of $G\backslash \TTT$ are precisely those whose stabilizers are isomorphic to $GL_2(\FF_q)$ or $\FF_{q^2}^*$. \\ \\
(ii) If $\ker \overline{N}$ has no element of order $4$ then $Quinn(G)$ acts freely on the isolated vertices of $G \backslash \TTT$.
\end{Cor}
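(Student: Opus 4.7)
The plan is to combine part (i), which the paper flags as following from \cite[Theorem 5.1]{MSelliptic}, with the preceding theorem on the $Quinn(G)$-action on vertices with stabilizers $GL_2(\FF_q)$ or $\FF_{q^2}^*$. Neither ingredient needs reworking here; the content of the corollary lies in packaging them.

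For part (i), I would simply invoke \cite[Theorem 5.1]{MSelliptic}, which in the $\delta=1$ case precisely characterizes the valency-one vertices of $G\backslash\TTT$ as those whose stabilizer is isomorphic to $GL_2(\FF_q)$ or $\FF_{q^2}^*$.

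For part (ii), since $\delta=1$ is odd and by assumption $\ker\overline{N}$ has no element of order $4$, both parts (a) and (b) of the preceding theorem apply. Setting
$$V_1 = \{\tilde v \in \vert(G\backslash\TTT) : G_v \cong GL_2(\FF_q)\}, \qquad V_2 = \{\tilde v \in \vert(G\backslash\TTT) : G_v \cong \FF_{q^2}^*\},$$
these statements give freeness of the $Quinn(G)$-action on each of $V_1$ and $V_2$ separately. To promote this to freeness on $V_1 \cup V_2$, I would check two quick points. First, $V_1 \cap V_2 = \emptyset$, because $GL_2(\FF_q)$ is non-abelian while $\FF_{q^2}^*$ is cyclic, so the stabilizer isomorphism types are genuinely disjoint. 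Second, each $V_i$ is $Quinn(G)$-stable, since any quasi-inner $\iota_g$ acts on $\TTT$ via the M\"obius transformation $g\in GL_2(K)\subset GL_2(K_\infty)$, whence $G_{g(v)}$ is the $g$-conjugate of $G_v$ inside $GL_2(K_\infty)$ and is in particular isomorphic to it.

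Combining these observations, a non-identity element of $Quinn(G)$ permutes $V_1$ among itself and $V_2$ among itself, freely on each, and therefore has no fixed vertex in $V_1 \cup V_2$. By part (i) this union is exactly the set of isolated vertices, giving the desired freeness. I do not anticipate a real obstacle; the only conceptual content beyond citation is the invariance of stabilizer-isomorphism-type under quasi-inner automorphisms, which is automatic because conjugation in a group is an isomorphism.
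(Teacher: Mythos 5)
Your argument coincides with the paper's: part (i) is exactly the cited characterization of the isolated vertices, and part (ii) is obtained, as in the paper, by combining the free action on the vertices with stabilizer $GL_2(\FF_q)$ with the free action on those with stabilizer $\FF_{q^2}^*$ (the latter being where the no-element-of-order-$4$ hypothesis enters, via the order-$4$ criterion for a quasi-inner automorphism to swap $G\omega$ and $G\overline{\omega}$), the disjointness and $Quinn(G)$-invariance of the two vertex types being the same routine glue. No gaps.
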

\noindent By looking at the stabilizers in $G$ of the objects discussed above we obtain several statements about the action of $Quinn(G)$ on the conjugacy classes of certain types of subgroups of $G$. (See Sections $3$ and $5$.) \\
\noindent For convenience we begin with a list of notations which will be used
throughout this paper.
\\ \par
\begin{tabular}{ll}
$\FF_q$              & the finite field with $q=p^n$ elements;\\
$K$                      & an algebraic function field of one variable with constant field $\FF_q$;\\
$g$                 & the genus of $K$;\\
$\infty$               & a chosen place of $K$;\\
$\delta$               & the degree of the place $\infty$;\\
$A$                      & the ring of all elements of $K$ that are integral outside $\infty$;\\
$K_{\infty}$        & the completion of $K$ with respect to $\infty$;\\
$\Omega$           & Drinfeld's half-plane;\\
$\TTT$                  & the Bruhat-Tits tree of $GL_2(K_{\infty})$;\\
$G$                      & the Drinfeld modular group $GL_2(A)$;\\
$Gx$ & the orbit of $x$ under the action of $G$ on the object $x$;\\ 
$\widehat{G}$    & $GL_2(K)$;\\
$Z(K)$                      &  the set of scalar matrices in $\widehat{G}$;\\
$Z$        &  $Z(K)\cap G$;\\
$\widetilde{K}$   & the quadratic constant field extension $K.\FF_{q^2}$;\\
$\widetilde{A}$   & $A.\FF_{q^2}$, the integral closure of $A$ in $\widetilde{K}$;\\
$\Cl(R)$               & the ideal class group of the Dedekind ring $R$;\\
${\Cl}^0(F)$       & the divisor class group of degree $0$ of the function field $F$;\\
$\Cusp(G)$          & $G \backslash \PP^1(K)$, the set of cusps of $G$;\\
$E(A)$               & the set of elliptic elements of $G$: \\
$\Ell(G)$                & $G\backslash E(A)$, the set of elliptic points of $G$;\\
$\overline{\omega}$   & the image of $\omega \in E(A)$ under the Galois automorphism of $\widetilde{K}/K$; \\
$\Ell(G)^{=}$            & $\{G\omega: \omega \in E(A),G\omega=G\overline{\omega} \}$;  \\
$\Ell(G)^{\neq}$ & $\Ell(G) \backslash \Ell(G)^{=}$;\\
$S(s)$ & the stabilizer in a finite index subgroup $S$ (of $G$) of $s \in\PP^1(K)$;\\
$G^{\omega}$&  the stabilizer in $G$ of $\omega \in C_{\infty} \backslash K$; \\ 
$S_w$ & the stabilizer in $S$ of $w \in \vert(\TTT) \cup \edge(\TTT)$; \\
$\mathcal {H}$ & $\{H \leq G: H \cong GL_2(\FF_q) \}$ ; \\
$\mathcal{C}$ &  $\{C \leq G: C \cong \FF_{q^2}^*\}$ ; \\
$\mathcal{C}_{mf}$ & $\{ C \in \mathcal{C}: C \;\mathrm{maximally \; finite \; in}\; $G$\}$; \\
$\mathcal{C}_{nm}$ & $\mathcal{C} \backslash \mathcal{C}_{mf}$; \\
$\mathcal{V}$& $\{ \widetilde{v} \in \vert(G \backslash \TTT): G_v \in \mathcal{C}\}$; \\
\end{tabular}
 \space
\\

\section{Quasi-inner automorphisms}\label{sect:quasi-inner}

\noindent \rm  Let $F$ be any field containing $A$ (and hence $K$) and let $Z(F)$ denote the set of scalar matrices in $GL_2(F)$. 
We are interested here in automorphisms of $G$ arising from conjugation by a non-scalar element of $GL_2(F)$. We first show this 
problem reduces to $N_{\widehat{G}}(G)$, the normalizer of $G$ in $\widehat{G}=GL_2 (K)$. For each $x \in F$ we use $(x)$ as 
a shorthand for the fractional ideal $Ax$.
 
\begin{Lem}\label{normalizer}
Let $M_0 \in GL_2(F)$ normalize $G$. Then
$$ M_0 \in Z(F).N_{\widehat{G}}(G).$$
\end{Lem}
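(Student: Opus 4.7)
The goal is to find a scalar $\alpha \in F^*$ with $\alpha^{-1} M_0 \in GL_2(K) = \widehat{G}$; then $\alpha^{-1} M_0$ automatically lies in $N_{\widehat{G}}(G)$, since conjugation by a scalar is trivial, and $M_0 = (\alpha I)\cdot(\alpha^{-1} M_0) \in Z(F)\cdot N_{\widehat{G}}(G)$ as required. Writing $M_0 = \begin{pmatrix} a & b \\ c & d \end{pmatrix}$ with $D = ad - bc \neq 0$, it therefore suffices to show that every ratio of the entries $a,b,c,d$ lies in $K$, whereupon one may take $\alpha$ to be any nonzero entry of $M_0$.

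The plan is to extract these ratios by conjugating a small collection of explicit elements of $G$ by $M_0$ and reading off the resulting integrality conditions. For the elementary matrix $\begin{pmatrix} 1 & 1 \\ 0 & 1 \end{pmatrix}\in G$,
$$ M_0 \begin{pmatrix} 1 & 1 \\ 0 & 1 \end{pmatrix} M_0^{-1} \;=\; I + \frac{1}{D}\begin{pmatrix} -ac & a^2 \\ -c^2 & ac \end{pmatrix} \;\in\; G, $$
which forces $a^2/D,\ c^2/D,\ ac/D \in A$; conjugating $\begin{pmatrix} 1 & 0 \\ 1 & 1 \end{pmatrix}$ symmetrically gives $b^2/D,\ d^2/D,\ bd/D \in A$. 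To pick up the remaining cross-products $ab/D,\ bc/D,\ cd/D$, I would conjugate one further element chosen from the finite subgroup $GL_2(\FF_q)\subseteq G$: for $q\geq 3$ the diagonal matrix $\mathop{\mathrm{diag}}(\alpha,1)$ with $\alpha\in\FF_q^*\setminus\{1\}$, and for $q=2$ the order-$3$ matrix $\begin{pmatrix}1 & 1\\ 1 & 0\end{pmatrix}$. Combining the resulting integrality conditions with the previous ones, every quadratic monomial $xy/D$ with $x,y \in \{a,b,c,d\}$ lies in $A$.

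With this achieved, choose a nonzero entry of $M_0$, say $a\neq 0$; then for every entry $y$,
$$ \frac{y}{a} \;=\; \frac{ay/D}{a^2/D} \;\in\; K, $$
since the numerator and nonzero denominator are both in $K$. Hence $a^{-1}M_0 \in \widehat{G}$, completing the argument; if $a = 0$, any other nonzero entry serves the same role. The only real obstacle is ensuring that the three conjugation test matrices supply \emph{enough} independent linear conditions on the six bilinear expressions in the entries of $M_0$ to isolate each one, and the small-$q$ case requires slightly more care because the diagonal matrices of $G$ over $\FF_2$ are trivial. Everything beyond that is routine bookkeeping.
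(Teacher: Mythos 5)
Your proof is correct and follows essentially the same strategy as the paper's: conjugate explicit elements of $G$ by $M_0$, read off that the resulting bilinear expressions in the entries lie in $A$, and divide by a nonzero entry to land in $GL_2(K)=\widehat{G}$. The paper streamlines the bookkeeping by first replacing $M_0$ with $\gamma^{-1}M_0$ (so one entry equals $1$), after which conjugating the single unipotent $T(1)$ by $M_0^{\pm 1}$ already forces $\det(M_0)$, $\alpha$, $\delta$, and hence $\beta$, into $K$; your version needs the two extra test matrices only because you postpone the scalar normalization to the end and therefore must recover all ten products $xy/D$.
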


\begin{proof} 
Let
$$ M_0= \left[\begin{array}{cc}\alpha&\beta\\\gamma&\delta
\end{array}\right].$$
\noindent Suppose that $\gamma \neq 0$. Replacing $M_0$ with $\gamma^{-1}M_0$ we may assume that $\gamma=1$. Now
$$NT(1)N^{-1} \in G,$$
\noindent where $N=M_0^{\pm 1}$. It follows that $\det(M_0)$, $\alpha$, $\delta \in K$ and hence that 
$\beta= \alpha\delta -\det(M_0) \in K$. The proof for the case where $\gamma=0$ is similar. 
\end{proof} 

\noindent We state a special case $(n=2)$ of a result of Cremona \cite{Cremona} . 
 
\begin{Thm}\label{Misquasi}
Let
$$ M= \left[\begin{array}{cc}a&b\\c&d
\end{array}\right] \in \widehat{G}$$
\noindent and define
$$\qq(M):=(a)+(b)+(c)+(d).$$
Then $M \in N_{\widehat{G}}(G)$ if and only if
$$\qq(M)^2=(\varDelta),$$
\noindent where $\varDelta=\det(M)$. 
\end{Thm}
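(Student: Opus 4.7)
One inclusion is automatic: $\Delta = ad - bc \in (ad) + (bc) \subseteq \qq(M)^2$, hence $(\Delta) \subseteq \qq(M)^2$ unconditionally. So the theorem reduces to showing that $M \in N_{\widehat{G}}(G)$ is equivalent to the reverse containment $\qq(M)^2 \subseteq (\Delta)$, i.e.\ to $\Delta$ dividing each of the ten pairwise products of the entries of $M$ in $A$.

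For the ``if'' direction I would expand $MgM^{-1} = \Delta^{-1}\,Mg\,\mathrm{adj}(M)$ for a general $g = \sm x & y \\ z & w \esm \in G$. A direct calculation shows that each of its four entries is $\Delta^{-1}$ times an $A$-linear combination, with coefficients drawn from $\{x,y,z,w\} \subseteq A$, of the ten pairwise products of the entries of $M$. Under the hypothesis each product lies in $\Delta A$, so $MgM^{-1} \in M_2(A)$, and $\det(MgM^{-1}) = \det(g) \in A^{\times}$, so in fact $MgM^{-1} \in G$. The hypothesis is preserved by $M \mapsto M^{-1}$, since $\qq(M^{-1}) = \Delta^{-1}\qq(M)$ yields $\qq(M^{-1})^2 = (\Delta^{-1})$; applying the same argument to $M^{-1}$ gives $M^{-1}GM \subseteq G$ and hence $MGM^{-1} = G$.

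For the ``only if'' direction I would generate the ten required divisibilities by conjugating $M$ by a short list of test elements of $G$. Letting $\alpha$ range over $A$ and demanding $MT(\alpha)M^{-1} \in M_2(A)$ for $T(\alpha) = \sm 1 & \alpha \\ 0 & 1 \esm$ forces $(a^2),\,(c^2),\,(ac) \subseteq (\Delta)$; symmetrically, the transposed lower unipotents $\sm 1 & 0 \\ \alpha & 1 \esm$ yield $(b^2),\,(d^2),\,(bd) \subseteq (\Delta)$. Finally, conjugation by $\sm 1 & 1 \\ 1 & 0 \esm \in G$ (whose determinant is $-1 \in A^{\times}$) supplies the mixed products $(ab),\,(bc),\,(cd) \subseteq (\Delta)$ from its off-diagonal entries, after which $(ad) \subseteq (\Delta)$ follows from the identity $\Delta = ad - bc$. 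Combining the ten divisibilities gives $\qq(M)^2 \subseteq (\Delta)$.

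The computation is essentially book-keeping; the only subtle point is choosing test matrices that let one separate the ten pairwise-product divisibilities from their $A$-linear entanglements. Triangular unipotents alone reach only the ``column-wise'' products $\{a^2,c^2,ac\}$ and $\{b^2,d^2,bd\}$, and an obvious alternative such as $\mathrm{diag}(1,-1)$ degenerates in characteristic $2$; the non-triangular element $\sm 1 & 1 \\ 1 & 0 \esm$ sidesteps this and captures all the remaining mixed products at once.
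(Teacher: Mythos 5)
Your proof is correct in substance, but it is worth pointing out that the paper itself offers no proof of this statement at all: Theorem \ref{Misquasi} is simply quoted as the $n=2$ case of Cremona's result on $GL_n$ of a Dedekind domain, so your self-contained elementary argument is genuinely new relative to the text. Both directions are sound. The unconditional inclusion $(\Delta)\subseteq\qq(M)^2$, the expansion $MgM^{-1}=\Delta^{-1}Mg\,\mathrm{adj}(M)$ showing every entry is an $A$-combination of the ten pairwise products, and the observation that $\qq(M^{-1})^2=(\Delta^{-1})=(\det M^{-1})$ so that the hypothesis is inherited by $M^{-1}$, are all exactly right and give the ``if'' direction cleanly.

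One small bookkeeping slip in the ``only if'' direction: conjugating by $\sm 1&1\\1&0\esm$ does \emph{not} yield $(bc)\subseteq(\Delta)$ from the off-diagonal entries. Those entries are $\Delta^{-1}(a^2-ab-b^2)$ and $\Delta^{-1}(cd+d^2-c^2)$, which (given the squares already obtained from the unipotents) deliver only $(ab)$ and $(cd)$; the product $bc$ never appears off the diagonal. You must instead read off the $(1,1)$ entry $\Delta^{-1}(ad+bd-ac)$, which together with $(bd),(ac)\subseteq(\Delta)$ gives $(ad)\subseteq(\Delta)$ and then $(bc)=(ad-\Delta)\subseteq(\Delta)$ — the reverse of the order you state. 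Alternatively, and more economically, the third test matrix is unnecessary: from the unipotents alone you have $a^2,b^2,c^2,d^2,ac,bd\in(\Delta)$, and then for any mixed product, e.g.\ $(ab/\Delta)^2=(a^2/\Delta)(b^2/\Delta)\in A$, so $ab/\Delta$ is integral over $A$ and hence lies in $A$ since $A$ is integrally closed — the same trick the paper uses in Corollary \ref{M^2}(ii). This disposes of $ab$, $cd$, $ad$, $bc$ simultaneously and also removes your worry about characteristic $2$.
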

 
\begin{Cor}\label{M^2}
Let $M \in N_{\widehat{G}}(G)$ with $\varDelta=\det(M)$.
\begin{itemize}
\item[(i)] $ \varDelta^{-1}M^2 \in SL_2(A).$
\item[(ii)] If $\varDelta \in A^*$, then $M \in G$.
\end{itemize}
\end{Cor}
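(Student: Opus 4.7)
The plan is to reduce both parts to Theorem~\ref{Misquasi}, which supplies the ideal identity $\mathfrak{q}(M)^2 = (\Delta)$. Writing $M = \begin{pmatrix} a & b \\ c & d \end{pmatrix}$, I would unpack this identity to observe that the fractional ideal generated by all pairwise products $xy$ with $x,y \in \{a,b,c,d\}$ equals $(\Delta)$; in particular each such product lies in $(\Delta)$, so $xy/\Delta \in A$ for every such pair.

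For part (i), I would first note that $\det(\Delta^{-1}M^2) = \Delta^{-2}\Delta^2 = 1$, so only membership of the entries in $A$ needs checking. A direct expansion
\[ M^2 = \begin{pmatrix} a^2+bc & (a+d)b \\ (a+d)c & bc+d^2 \end{pmatrix} \]
shows that each entry of $\Delta^{-1}M^2$ is an $A$-linear combination of the quotients $xy/\Delta$, which by the preceding paragraph all lie in $A$. Hence $\Delta^{-1}M^2 \in SL_2(A)$.

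For part (ii), the assumption $\Delta \in A^*$ gives $(\Delta) = A$, so $\mathfrak{q}(M)^2 = A$. Since $A$ is a Dedekind domain, every nonzero fractional ideal factors uniquely as a product of integer powers of prime ideals; the only fractional ideal whose square is the unit ideal is therefore $A$ itself, so $\mathfrak{q}(M) = A$. In particular $(a),(b),(c),(d) \subseteq A$, i.e., $a,b,c,d \in A$, and together with $\det M \in A^*$ this yields $M \in GL_2(A) = G$.

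I do not foresee substantial obstacles: both parts follow almost mechanically from Theorem~\ref{Misquasi}. The only mildly non-trivial input is the uniqueness of square roots in the monoid of fractional ideals of a Dedekind domain, used in (ii), which is a standard consequence of unique prime factorization.
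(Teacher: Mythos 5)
Your proof is correct, and part (i) is in substance the paper's own argument: the paper simply observes that every entry of $M^2$ lies in $\qq(M)^2=(\varDelta)$, which is exactly what your explicit expansion of $M^2$ into sums of products $xy$ with $x,y\in\{a,b,c,d\}$ verifies (you are slightly more careful in also recording that the determinant of $\varDelta^{-1}M^2$ is $1$, which the paper leaves implicit).

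For part (ii) you take a genuinely different, though equally standard, route. The paper argues elementwise: for any entry $x$ of $M$, Theorem~\ref{Misquasi} gives $x^2\in\qq(M)^2=(\varDelta)=A$, and since $A$ is integrally closed in $K$ and $x\in K$ satisfies the monic equation $T^2-x^2=0$, one concludes $x\in A$. You instead argue at the level of ideals: $\qq(M)^2=A$ forces $\qq(M)=A$ because the group of fractional ideals of a Dedekind domain is free abelian on the primes, hence torsion-free, and then $(a),(b),(c),(d)\subseteq\qq(M)=A$. Both arguments are complete; the paper's uses only integral closedness (so it would survive in any integrally closed domain), whereas yours leans on the full unique factorization of fractional ideals, which is of course available here and is the same machinery already needed for Theorem~\ref{Misquasi}. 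Neither approach has an advantage worth insisting on; this is just two standard faces of the Dedekind property.
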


\begin{proof}
(i) By Theorem \ref{Misquasi} every entry of $M^2$ is in $\qq(M)^2 =(\varDelta)$.
\par 
For part (ii) let $x$ be any entry of $M$. Then $x^2 \in A$ by Theorem \ref{Misquasi} 
and so $ x \in A$, since $A$ is integrally closed. 
\end{proof} 

\noindent Another important consequence \cite{Cremona} of  Theorem \ref{Misquasi} is the following. 
 
\begin{Thm}\label{isoto2tors}
The map $M\mapsto\qq(M)$ induces an isomorphism
$$N_{\widehat{G}}(G)/Z(K).G \; \cong \;\mathrm{Cl}(A)_2,$$
\noindent where $\mathrm{Cl}(A)_2$ is the subgroup of all involutions in $\mathrm{Cl}(A)$. 
\end{Thm}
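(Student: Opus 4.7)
Define $\phi: N_{\widehat{G}}(G) \to \Cl(A)_2$ by $\phi(M) = [\qq(M)]$. The map is well-defined on the target: Theorem \ref{Misquasi} gives $\qq(M)^2 = (\Delta)$, so $[\qq(M)]^2$ is trivial in $\Cl(A)$. My plan is to verify the four standard steps (homomorphism, surjectivity, containment of the alleged kernel, reverse containment) using the ideal-theoretic structure of a Dedekind domain at each step.

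For the homomorphism property, I would first observe that matrix multiplication forces each entry of $MN$ to lie in $\qq(M)\qq(N)$, so $\qq(MN) \subseteq \qq(M)\qq(N)$. Squaring and applying Theorem \ref{Misquasi} twice gives
\[
\qq(MN)^2 \;=\; (\det M)(\det N) \;=\; \qq(M)^2 \qq(N)^2 \;=\; (\qq(M)\qq(N))^2.
\]
In the Dedekind domain $A$, unique factorization of fractional ideals forces $\qq(MN) = \qq(M)\qq(N)$ on the nose; hence $\phi$ is a homomorphism (in fact a lift of $\phi$ to the group of fractional ideals is already multiplicative). This rigidity is the step I expect to do the most work: the inclusion is cheap, but one has to remember to square and exploit the Dedekind property to upgrade the inclusion to an equality.

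For surjectivity, given $[\aa] \in \Cl(A)_2$ pick a representative $\aa$ with $\aa^2 = (\Delta)$ principal. Write $\aa = (a_1,a_2)$ using that ideals in a Dedekind domain are two-generated. The image of the $A$-linear map $\aa \oplus \aa \to K$, $(c,d)\mapsto a_1 d - a_2 c$, is $\aa(a_1) + \aa(a_2) = \aa \cdot (a_1,a_2) = \aa^2 = (\Delta)$, so there exist $c,d \in \aa$ with $a_1 d - a_2 c = \Delta$. The resulting matrix
\[
M \;=\; \left[\begin{array}{cc} a_1 & a_2 \\ c & d \end{array}\right]
\]
has $\qq(M) = \aa$ and $\det(M) = \Delta$, so $\qq(M)^2 = (\Delta)$, and Theorem \ref{Misquasi} places $M \in N_{\widehat{G}}(G)$ with $\phi(M) = [\aa]$.

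Finally, for the kernel, the inclusion $Z(K)\cdot G \subseteq \ker\phi$ is direct: if $N \in G$ then $\qq(N) \subseteq A$ and $\qq(N)^2 = (\det N) = A$, whence $\qq(N) = A$ by unique factorization, so $\qq(\lambda N) = (\lambda)$ is principal for any $\lambda \in K^*$. Conversely, suppose $\phi(M) = 1$, so $\qq(M) = (\lambda)$ for some $\lambda \in K^*$. Then all entries of $\lambda^{-1}M$ lie in $A$, and its determinant $\lambda^{-2}\det(M)$ generates the ideal $(\lambda)^{-2}\qq(M)^2 = A$, hence is an element of $A^*$. Therefore $\lambda^{-1}M \in GL_2(A) = G$, giving $M \in Z(K)\cdot G$. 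This finishes the identification $N_{\widehat{G}}(G)/Z(K)\cdot G \cong \Cl(A)_2$.
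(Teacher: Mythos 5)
Your proof is correct and complete, and it takes a genuinely different route from the paper: the paper's own proof is essentially a citation of Cremona's theorem for $GL_n$ of a Dedekind domain (it records only the fact, from Cremona's Remarks 2, that every row and column of an $M\in N_{\widehat{G}}(G)$ generates $\qq(M)$, and then asserts that the induced map is an isomorphism), whereas you give a self-contained verification. The one non-routine ingredient you supply is multiplicativity of $M\mapsto\qq(M)$ on fractional ideals: the cheap inclusion $\qq(MN)\subseteq\qq(M)\qq(N)$ is upgraded to equality by squaring, applying Theorem \ref{Misquasi} to $M$, $N$ and $MN$, and invoking unique factorization of fractional ideals in the Dedekind domain $A$ (indeed equality of squares alone already forces equality). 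This neatly sidesteps the row/column generation fact that the paper imports from \cite{Cremona}. Your surjectivity argument (two-generation of ideals together with solving $a_1d-a_2c=\Delta$ with $c,d\in\aa$, which is possible because the image of that linear map is exactly $\aa^2=(\Delta)$) and your two kernel computations are all correct; note that the reverse kernel inclusion is essentially the same integrality argument as Corollary \ref{M^2}(ii). What your approach buys is independence from \cite{Cremona} beyond Theorem \ref{Misquasi} itself; what the paper's citation buys is brevity and the statement in the generality of $GL_n$.
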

 
\begin{proof}
This is another special case ($n=2$) of a result in \cite{Cremona}. 
If $\left[\begin{array}{cc}a&b\\c&d
\end{array}\right] \in N_{\widehat{G}}(G)$, 
it can be shown \cite[Remarks $2$]{Cremona} that
$$(a)+(b)=(a)+(c)=(d)+(b)=(d)+(c)=\qq(M).$$
Consequently there is a map from $N_{\widehat{G}}(G)$ to ${\Cl}(A)_2$,
which turns out to be an isomorphism. 
\end{proof}

\begin{Def}\label{def:quasi-inner}
An automorphism $\iota_g$ of $G$ is called \bf quasi-inner \rm if
$$\iota_g(x)=gxg^{-1}\;(x \in G),$$
\noindent for some $ g \in N_{\widehat{G}}(G)$.
We call $\iota_g$ {\it non-trivial}
if $g \notin Z(K).G$, i.e. if $\iota_g$ does not act like an inner automorphism. We note that
$$\iota_{g_1}=\iota_{g_2} \Leftrightarrow  g_1g_2^{-1} \in Z(K).$$ 
\noindent Finally we define 
$$Quinn(G):= N_{\widehat{G}}(G)/Z(K).G\cong\Cl(A)_2.$$
\end{Def}
\noindent So $Quinn(G)$ is the group of quasi-inner automorphisms modulo the inner ones. We note that in particular all quasi-inner automorphisms of $G$ act like inner automorphisms if 
$|\Cl(A)|$ is odd.

\noindent Let $\mathrm{Cl}^0(K)$ be the {\it group of divisor classes of degree zero} \cite[p.186]{Stich}. 
It is known \cite[p.104]{Serre} that the following exact sequence holds

\begin{equation}\label{exactsequence}
0\; \rightarrow \; {\Cl}^0(K) \; \rightarrow\; \Cl(A) \; \rightarrow \; \ZZ/ \delta \ZZ\; \rightarrow\; 0.
\end{equation}

\medskip
\noindent Our next result is an immediate consequence of Theorem \ref{isoto2tors}. 

\begin{Cor}\label{exquasi-inner}
$G$ has non-trivial quasi-inner automorphisms if and only if
$$ |\mathrm{Cl}(A)|=\delta|\mathrm{Cl}^0(K)|\; is\;even.$$ 
\end{Cor}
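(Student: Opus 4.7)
The plan is to string together three essentially immediate observations. By Definition \ref{def:quasi-inner}, $G$ has a non-trivial quasi-inner automorphism precisely when $N_{\widehat{G}}(G) \not\subseteq Z(K).G$, i.e.\ when $Quinn(G)$ is non-trivial. So the task reduces to characterising when $Quinn(G) \neq 1$.

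Next I would invoke Theorem \ref{isoto2tors}, which identifies $Quinn(G) \cong \Cl(A)_2$. Since $\Cl(A)$ is a finite abelian group (a standard fact for function-field Dedekind domains of the form $A$), its $2$-torsion subgroup $\Cl(A)_2$ is non-trivial if and only if $|\Cl(A)|$ is even. This is essentially Cauchy's theorem: a finite abelian group has an element of order $2$ iff its order is divisible by $2$.

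Finally I would read off $|\Cl(A)| = \delta|\Cl^0(K)|$ directly from the exact sequence (\ref{exactsequence}), since orders are multiplicative in short exact sequences of finite abelian groups. Combining these three steps gives the stated equivalence.

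There is no real obstacle; everything follows formally once Theorem \ref{isoto2tors} and the exact sequence are in hand. The only thing worth flagging is that one implicitly uses finiteness of $\Cl(A)$ (to apply Cauchy), but this is classical for the class groups arising in this setting.
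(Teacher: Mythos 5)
Your proposal is correct and follows exactly the route the paper intends: the paper presents this corollary as an immediate consequence of Theorem \ref{isoto2tors} together with the exact sequence (\ref{exactsequence}), which is precisely your chain of Theorem \ref{isoto2tors}, Cauchy's theorem for the finite abelian group $\Cl(A)$, and multiplicativity of orders in the exact sequence. Nothing further is needed.
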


\begin{Exm} We illustrate the results of this section with the simplest case $K= \FF_q(t)$, the rational function field over $\FF_q$. Then there exists a (monic) polynomial $\pi(t) \in \FF_q[t]$,
of degree $\delta$, irreducible over $\FF_q$, such that 
$$A= \left\{\frac{f}{\pi^m}: f \in k[t], \; m \geq 0, \; \deg f \leq \delta m\right\}.$$
\noindent It is known \cite[Theorem 5.1.15, p.193]{Stich} that here $\mathrm{Cl}^0(K)$ is trivial so that
$$\Cl(A) \cong \ZZ/\delta\ZZ.$$
\noindent Hence $G$ has non-trivial quasi-inner automorphisms if and only if $\delta$ is even. 
Hence here either $Quinn(G)$ is trivial or cyclic of order $2$. 

\noindent For a specific illustration of Theorem \ref{isoto2tors} we restrict further to $\delta=2$. In this case $\pi(t)=t^2+\sigma t+\tau$, where $\sigma \in \FF_q$ and $\tau \in \FF_q^*$. We begin with 
the $A$-ideal generated by $\pi^{-1}$ and $t\pi^{-1}$ which is {\it not} principal. 
Let $\pi(t)=tt'+\tau$ and put
$$g_0=\left[\begin{array}{lll} \tau& t\\[10pt]
-t' & 1\end{array}\right].$$ 

\noindent Then by Theorem \ref{Misquasi} $g_0 \in N_{\widehat{G}}(G)$ 
and from Theorem \ref{isoto2tors} we see $g_0 \notin Z(K).G$. Hence $g_0$ provides a generator of $\Cl(A)_2$.
\end{Exm}

\begin{Rk}\label{quasi-innersect}  \rm
From the theory of Jacobian varieties we know that the $2$-torsion in $\mathrm{Cl}^0(K)$ is bounded 
by $2^{2g}$, and even by $2^g$ if the characteristic of $K$ is $2$ \cite[Theorem 11.12]{Rosen}. 
Hence by the exact sequence (1) it follows that $|Quinn(G)|=|\mathrm{Cl}(A)_2 |\leq 2^{2g+1}$ (and 
 $\leq 2^{g+1}$, when $\mathrm{char}(K)=2$).
\par 
\noindent In odd characteristic we can easily find examples with $|\Cl(A)_2|=2^{2g}$, provided we are willing to accept a big constant field. Given a function field $F$ of genus $g$ with constant field $\FF_{p^r}$, just pick $q=p^{rn}$ such that all $2$-torsion points of $Jac(F)$ are $\FF_q$-rational and consider $K=F.\FF_q $. Then $\Cl^0(K)_2  \cong (\ZZ/ 2\ZZ)^{2g}$. Choosing a place $\infty$ of $K$ of odd degree $\delta$, from the exact sequence (1), we see that $|\mathrm{Cl}(A)_2 |=2^{2g}$. 
\par
\noindent Similarly in characteristic $2$ examples for which $|\Cl(A)_2|=2^g$ can be found by choosing $F$ suitably, namely $F$ has to be \it ordinary. \rm \\
\noindent Whether for even $\delta$ one can reach the bound $2^{2g+1}$ (resp. $2^{g+1}$) depends on whether or not the induced short exact sequence for the Sylow $2$-subgroup of $\mathrm{Cl}(A)$ splits or not.
\end{Rk}
\begin{Def}  Let $R,\;S$ be subgroups of a group $T$. We write
$$R \sim S$$
\noindent if and only if $R=S^t=tSt^{-1}$, for some $t\in T$. 
We put
$$R^T=\left\{R^t: t \in T \right\}.$$
Let $\mathcal{S}$ be a set of subgroups of $T$. We put
$$\mathcal{S}^G=\left\{ S ^G: S \in \mathcal{S} \right\}.$$
 \end{Def} 
 \noindent This paper is principally concerned with various actions of $Quinn(G)$. It is appropriate at this point to describe in detail the most important of these. Let $\iota_g$ be as above.\\ \\
\noindent (i) It is clear that $GL_2(K_{\infty})$ acts on $\Omega$ as 
M\"obius transformations and that this action is trivial for all scalar matrices. Then $\iota_g$ acts on $E(G)$ since, for all $\omega \in E(G)$, 
$$ G^{g(\omega)}=(G^{\omega})^g( \leq G).$$
\noindent Recall that $\Ell(G)=\{G\omega:\omega \in E(G) \}$. The map
$$G\omega \mapsto Gg(\omega)$$
extends naturally to a well-defined action of $Quinn(G)$ on $\Ell(G)$. \\
\noindent (ii) Clearly $G$ acts as M\"obius transformations on $\PP^1(K)$ and it is well-known that 
$$G \backslash \PP^1(K) \leftrightarrow \Cl(A).$$
As we shall see later from the structure of the quotient graph it follows that, for all $k \in \PP^1(K)$, $G(k)$ is infinite, metabelian. Recall that $\Cusp(G)=\{Gk: k \in \PP^1(K)\}$. As before the map $$Gk \mapsto Gg(k)$$ extends to a well-defined action of $Quinn(G)$ on $\Cusp(G)$. \\
\noindent (iii) Serre \cite[Chapter II, Section 1.1, p.67]{Serre} uses \it lattice classes \rm as a model for the vertices and edges of $\TTT$. It is clear that $GL_2(K_{\infty})$ acts naturally on these. In particular the scalar matrices act trivially. The map
$$Gw\mapsto Gg(w),$$
where $w \in \vert(\TTT) \cup \edge(\TTT)$,
extends to a well-defined action of $Quinn(G)$ on the quotient graph $G \backslash \TTT$. Note that $ G_{g(w)}= ((G_w))^g \leq G$. We will use this action to extend a result of Serre. \\
\noindent (iv) Suppose that  
$$\mathcal{S}=\left\{ H \leq G: H \cong GL_2(\FF_q)\right\}$$ 
or $\mathcal{S}$ is a $G$-conjugacy closed subset of  $ \mathcal{C}=\left\{ C \leq G: C \cong \FF_{q^2}^*\right\}$.
\noindent  Then $Quinn(G)$ acts by conjugation on $\mathcal{S}^G$. We use these to define actions of $Quinn(G)$ on significant subsets of $\vert(\TTT)$.

\section{Action on vertex stabilizers}\label{sect:stabilizers}

\noindent Almost all the results in this section hold for all $\delta$. We record the important general properties of subgroups of vertex stabilizers.
\begin{Lem}\label{vertfinite}
(i) $G_v$ is finite, for all $v \in \vert(\TTT)$. \\ \\
(ii) Let $S$ be a finite subgroup of $G$. Then 
$$S \leq G_{v_0},$$ for some $v_0 \in \vert(\TTT)$.

\end{Lem}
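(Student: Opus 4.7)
Part (i) follows from the standard discrete-versus-compact principle applied to the action of $G$ on $\TTT$, while part (ii) is the classical fixed-point theorem for finite groups of tree automorphisms. Both are essentially contained in Serre's book \cite{Serre}; the task is simply to check that they carry over to the function-field setting.

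For (i), represent the vertex $v$ by an $\OOO_\infty$-lattice $L=g\OOO_\infty^2$ with $g\in GL_2(K_\infty)$, so that the stabilizer of $v$ inside $GL_2(K_\infty)$ is $K_\infty^*\cdot g\,GL_2(\OOO_\infty)\,g^{-1}$. For any $M\in G_v$ one has $\det M\in A^*=\FF_q^*$ (global units of $A$ are just the constants), hence $v_\infty(\det M)=0$. Writing $ML=\lambda L$ with $\lambda\in K_\infty^*$, a conjugation-and-determinant computation yields $v_\infty(\det M)=2\,v_\infty(\lambda)$, so $\lambda\in\OOO_\infty^*$ and therefore $ML=L$ exactly. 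Consequently
\[
G_v \;\subset\; GL_2(A)\,\cap\, g\,GL_2(\OOO_\infty)\,g^{-1}.
\]
The group $g\,GL_2(\OOO_\infty)\,g^{-1}$ is compact in $GL_2(K_\infty)$ (the residue field at $\infty$ being finite), while $GL_2(A)$ is discrete there: indeed $A$ is discrete in $K_\infty$, since every non-zero element of $A$ is regular outside $\infty$, and any non-constant such element must have a pole at $\infty$, so $A\cap\pi\OOO_\infty=\{0\}$ for any local uniformizer $\pi$. A discrete subgroup meets any compact set in a finite set, proving (i).

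For (ii), I would invoke the classical theorem (Serre \cite[Ch.~I, \S6]{Serre}) that a group acting without inversion on a tree fixes a vertex whenever all its orbits are bounded. Given a finite $S\leq G$, the orbit $Sv$ of any $v\in\vert(\TTT)$ is finite, hence bounded; since the $G$-action, and therefore the $S$-action, on $\TTT$ is without inversion as recorded in the introduction, $S$ fixes some vertex $v_0\in\vert(\TTT)$, giving $S\leq G_{v_0}$.

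The only genuine obstacle is the discreteness of $GL_2(A)$ in $GL_2(K_\infty)$ used in (i). This is where the argument deviates from the classical $SL_2(\ZZ)\subset SL_2(\RR)$ case, but the single observation on pole orders at $\infty$ handles it in one line. Once discreteness is granted, both parts are routine.
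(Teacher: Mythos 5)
Your proof is correct, and it is essentially the standard argument: the paper itself gives no argument here but simply cites Serre \cite[Proposition 2, p.76]{Serre}, whose proof is exactly the discreteness-plus-compactness computation you carry out for (i) and the bounded-orbit fixed-point theorem for (ii). The two points you single out as needing care in the function-field setting --- that $A\cap\pi\OOO_\infty=\{0\}$ (hence $GL_2(A)$ is discrete) and that $\det M\in A^*=\FF_q^*$ forces $v_\infty(\det M)=0$ (hence the action is without inversion and the homothety factor is a unit) --- are indeed the only places where the classical argument needs checking, and you check them correctly.
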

\begin{proof} See \cite[Proposition 2, p.76]{Serre}.
\end{proof}
\noindent In this section we are concerned with subgroups of $G_v$ which contain a cyclic subgroup of order $q^2-1$. We record the following result.
\begin{Lem}\label{cyclicsubgroup} Suppose that $G_v$ contains a cyclic subgroup of order $q^2-1$. Then
$$ G_v \cong GL_2(\FF_q)\; or \; G_v \cong \FF_{q^2}^*.$$
\end{Lem}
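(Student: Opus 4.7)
The plan is to reduce this to the classification of finite subgroups of $G$ containing a large cyclic subgroup established in our earlier paper; indeed, the statement is precisely \cite[Corollary 2.12]{MSstabilizer}. For a self-contained sketch I would proceed in three steps, beginning by extracting a canonical copy of $\FF_{q^2}^*$ inside $G_v$. Let $\gamma\in G_v$ generate a cyclic subgroup of order $q^2-1$. Since $\gcd(q^2-1,p)=1$, $\gamma$ is semisimple, and its eigenvalues are primitive $(q^2-1)$-th roots of unity, hence lie in $\FF_{q^2}\setminus\FF_q$. As the characteristic polynomial of $\gamma$ is $K$-rational and has a root outside the constant field $\FF_q$ of $K$, it must be the minimal polynomial over $\FF_q$ of a primitive $(q^2-1)$-th root of unity, an irreducible quadratic. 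Consequently $\FF_q[\gamma]\subset M_2(A)$ is a copy of $\FF_{q^2}$, and $C:=\FF_q[\gamma]^*=\langle\gamma\rangle\cong\FF_{q^2}^*$ is a subgroup of $G_v$.

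Second, I would exploit the ambient structure of $C$ in $GL_2(K)$. Because $\gamma$ has distinct eigenvalues over $\overline{K}$, the centralizer of $C$ in $M_2(K)$ is the maximal subfield $L:=K[\gamma]=K\otimes_{\FF_q}\FF_{q^2}\cong\widetilde{K}$, and the normalizer of $C$ in $GL_2(K)$ is $L^*\cup L^*\tau$ for any $\tau$ inducing the nontrivial element of $\mathrm{Gal}(\widetilde{K}/K)$. Every finite subgroup of $L^*$ consists of roots of unity, which by constant-field considerations all lie in $\FF_{q^2}^*$; hence $G_v\cap L^*=C$.

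Finally I would split into two cases according to whether $G_v$ normalizes $C$. If it does, then $G_v$ is either $C$ itself—giving $G_v\cong\FF_{q^2}^*$—or a degree-two extension $C\cdot\langle\tau\rangle$ of order $2(q^2-1)$; the second possibility is ruled out by a direct calculation showing that any such $\tau\in G$ would force a contradiction with the determinant-in-$\FF_q^*$ constraint satisfied by torsion elements of $GL_2(A)$. If $G_v$ does not normalize $C$, I would pick $h\in G_v$ with $hCh^{-1}\neq C$ and feed the image of $G_v$ in $PGL_2(\overline{\FF_q})$ into Dickson's classification of finite subgroups of $PGL_2$. The presence of two distinct non-split Cartan subgroups of order $q+1$ (the images of $C$ and $hCh^{-1}$), together with the divisibility constraints imposed by $|C/(C\cap Z)|=q+1$, narrows the image to $PGL_2(\FF_q)$; lifting, and again using the determinant constraint together with $Z(G)\cap G_v=\FF_q^*$, then yields $G_v\cong GL_2(\FF_q)$. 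The main obstacle is executing the Dickson step cleanly while staying inside $GL_2(A)$, which is precisely the technical content of \cite{MSstabilizer}, making the citation the most efficient route.
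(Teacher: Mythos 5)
Your proof--by--citation coincides exactly with the paper's: the paper's entire proof of this lemma is the reference to \cite[Corollaries 2.2, 2.4, 2.12]{MSstabilizer}, so on that level the two agree completely.

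The supplementary sketch, however, contains a genuine error at the dihedral step. You claim that when $G_v$ normalizes $C\cong\FF_{q^2}^*$, the index-two extension $C\cdot\langle\tau\rangle$ of order $2(q^2-1)$ is ruled out by a ``determinant-in-$\FF_q^*$'' constraint on torsion elements of $GL_2(A)$. It is not: the normalizer of a non-split torus inside $GL_2(\FF_q)\leq G$ is precisely such a group $C\rtimes\langle\tau\rangle$, and every one of its elements has determinant in $\FF_q^*$, so groups of this shape certainly occur as subgroups of $G$. What the lemma actually asserts is that such a group is never the \emph{full} stabilizer of a vertex (for $q>2$, where $2(q^2-1)\neq|GL_2(\FF_q)|$): whenever $C\rtimes\langle\tau\rangle$ fixes $v$, the stabilizer $G_v$ is forced to be strictly larger, namely a conjugate of $GL_2(\FF_q)$. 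No determinant computation can detect this; it requires the explicit lattice-class description of vertex stabilizers developed in \cite{MSstabilizer}. The same remark applies to your Dickson step in the non-normalizing case, where one must also exclude images such as $PGL_2(\FF_{q^k})$ with $k>1$ and the exceptional subgroups, again using constant-field and integrality input rather than divisibility alone. Since you ultimately defer both points to the citation, the proof as submitted is acceptable, but the sketch would not survive being made self-contained in its present form.
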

\begin{proof} See \cite[Corollaries 2.2, 2.4, 2.12]{MSstabilizer}.
\end{proof}
\noindent  In the first part of this section we look at the action of quasi-inner automorphisms on the following set
$$\mathcal{H}=\left\{ H \leq G: H \cong GL_2(\FF_q)\right\}.$$ 
\begin{Lem}\label{vertGL} Let $H \in \mathcal{H}$. Then there exists $v_0 \in \vert(\TTT)$ for which
$$H=G_{v_o}.$$
\end{Lem}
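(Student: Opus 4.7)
The plan is to combine Lemma \ref{vertfinite}(ii) with Lemma \ref{cyclicsubgroup} and a simple order comparison. Since $H \cong GL_2(\FF_q)$ is a \emph{finite} subgroup of $G$, Lemma \ref{vertfinite}(ii) immediately yields some vertex $v_0 \in \vert(\TTT)$ with
$$H \leq G_{v_0}.$$
The whole task therefore reduces to upgrading this inclusion to an equality.

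Next, I would exploit the fact that $GL_2(\FF_q)$ contains a Singer cycle, i.e.\ a cyclic subgroup of order $q^2 - 1$ coming from the embedding $\FF_{q^2}^{*} \hookrightarrow GL_2(\FF_q)$. This cyclic subgroup sits inside $H$ and therefore inside $G_{v_0}$. Hence $G_{v_0}$ contains a cyclic subgroup of order $q^2 - 1$, and Lemma \ref{cyclicsubgroup} applies: we must have
$$G_{v_0} \;\cong\; GL_2(\FF_q) \qquad \text{or} \qquad G_{v_0} \;\cong\; \FF_{q^2}^{*}.$$

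The second alternative is ruled out by cardinalities: $|H| = (q^2-1)(q^2-q) > q^2-1 = |\FF_{q^2}^{*}|$ for every prime power $q$, so $H$ cannot embed into a group of order $q^2-1$. Therefore $G_{v_0} \cong GL_2(\FF_q)$, whence $|G_{v_0}| = |H|$, and the inclusion $H \leq G_{v_0}$ forces $H = G_{v_0}$.

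I do not anticipate a real obstacle; the only point that requires any care is verifying that $H$ really does contain a cyclic subgroup of order $q^2-1$ (so that Lemma \ref{cyclicsubgroup} is applicable), but this is standard since $GL_2(\FF_q)$ visibly contains the image of $\FF_{q^2}^{*}$ acting on $\FF_{q^2}$ viewed as a two-dimensional $\FF_q$-vector space.
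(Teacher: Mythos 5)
Your proof is correct and follows exactly the paper's route: the paper's own proof is the one-line remark that the lemma follows from Lemma \ref{vertfinite}(ii) and Lemma \ref{cyclicsubgroup}, and your argument simply spells out the details (finiteness gives $H \leq G_{v_0}$, the Singer cycle makes Lemma \ref{cyclicsubgroup} applicable, and the order comparison rules out $\FF_{q^2}^*$ and forces equality). Nothing to correct.
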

\begin{proof} Follows from Lemmas \ref{vertfinite} (ii) and \ref{cyclicsubgroup}.
\end{proof}
\begin{Rk} (i)  Every $\TTT$ contains a particular vertex $v_s$, usually referred to as \bf standard \rm (after Serre), for which
$$G_{v_s}=GL_2(\FF_q).$$
See \cite[Remark 3), p.97]{Serre} \\
(ii) On the other hand for the case $A=\FF_q[t]$ (equivalently $g(K)=0,\;\delta =1$) it follows from Nagao's Theorem \cite[Corollary, p.87]{Serre} that here $\vert(\TTT)$ has no stabilizer which is cyclic of order $q^2-1$.  
\end{Rk}

\begin{Lem} \label{transGL}
Let $H \in \mathcal{H}$.  Then there exists a quasi-inner automorphism $\kappa=\iota_g$ of $G$ such that 
$$H=\kappa(GL_2(\FF_q)).$$ 
\end{Lem}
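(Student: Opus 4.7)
The plan is to construct an explicit $g \in N_{\widehat{G}}(G)$ satisfying $g\,GL_2(\FF_q)\,g^{-1} = H$, and then take $\kappa = \iota_g$.

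First I would observe that, since $H \cong GL_2(\FF_q)$ is embedded in $GL_2(K)$, its natural action on $K^2$ is a faithful $2$-dimensional representation which has no $H$-invariant $K$-line: otherwise the derived subgroup $SL_2(\FF_q)$ would act by unipotent upper-triangular matrices, a group of exponent $p$, contradicting the presence in $SL_2(\FF_q)$ of elements whose order is prime to $p$ (treating the small cases $q=2,3$ separately). Hence this representation is absolutely irreducible. The absolutely irreducible $2$-dimensional representations of $GL_2(\FF_q)$ in characteristic $p$ are exhausted by the standard one and its twists by $\det$-characters and Frobenius, each of which arises from an automorphism of $GL_2(\FF_q)$. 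Consequently, after possibly re-choosing the abstract isomorphism $GL_2(\FF_q)\cong H$, there exists $g\in\widehat{G}=GL_2(K)$ with $g^{-1}Hg = GL_2(\FF_q)$, the standard copy of $GL_2(\FF_q)$ inside $G$.

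Next I would study the $A$-lattice $L := g^{-1} A^2 \subset K^2$. Since $A^2$ is $H$-stable, $L$ is $GL_2(\FF_q)$-stable. Because $GL_2(\FF_q)$ generates $M_2(\FF_q)$ as an $\FF_q$-algebra (e.g.\ $\sm 0 & 1 \\ 0 & 0 \esm = \sm 1 & 1 \\ 0 & 1 \esm - I$ belongs to the $\FF_q$-span of $GL_2(\FF_q)$), the $A$-module $L$ is in fact $M_2(A)$-stable. Applying the idempotents $e_{11},e_{22}\in M_2(A)$ to $L$ decomposes it as $\bb_1\oplus\bb_2$ for fractional $A$-ideals $\bb_i$, and the remaining matrix units $e_{12},e_{21}$ then force $\bb_1=\bb_2=:\bb$. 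Since $L \cong A^2$ as abstract $A$-modules (via $g^{-1}$), comparison of Steinitz classes (second exterior powers) yields $[\bb]^2 = 1$ in $\Cl(A)$, so $[\bb]\in\Cl(A)_2$.

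To finish, writing $g^{-1} = \sm a & b \\ c & d \esm$, the condition that the columns of $g^{-1}$ generate $L = \bb\oplus\bb$ as an $A$-module gives $(a)+(b) = (c)+(d) = \bb$, so $\qq(g^{-1}) = \bb$. A local computation at each prime $\pp$ of $A$ (using that $g^{-1}$ induces an $A_\pp$-module isomorphism $A_\pp^2 \to \pi_\pp^{n} A_\pp^2$ with $n = v_\pp(\bb)$, hence $g^{-1}$ locally has the form $\pi_\pp^n N_\pp$ with $N_\pp \in GL_2(A_\pp)$) yields $v_\pp(\det g^{-1}) = 2n$, so $(\det g^{-1}) = \bb^2 = \qq(g^{-1})^2$. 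Theorem \ref{Misquasi} then gives $g^{-1}\in N_{\widehat{G}}(G)$, whence $g\in N_{\widehat{G}}(G)$, and $\kappa = \iota_g$ is the required quasi-inner automorphism with $\kappa(GL_2(\FF_q)) = g\cdot GL_2(\FF_q)\cdot g^{-1} = H$.

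The main obstacle I anticipate is the very first step, namely ensuring that $H$ is actually $\widehat{G}$-conjugate (as a subgroup, not merely as an abstract group) to the standard $GL_2(\FF_q)$; this requires a careful accounting of the outer automorphisms of $GL_2(\FF_q)$ to guarantee that the representation realised by $H\hookrightarrow GL_2(K)$ is equivalent to the standard one after suitably re-labelling. Once that reduction is in place, the decomposition of $M_2(A)$-stable lattices over the Dedekind domain $A$ and the prime-by-prime determinant calculation feeding into Cremona's criterion (Theorem \ref{Misquasi}) are essentially routine.
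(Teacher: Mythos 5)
Your argument is correct and arrives at the conclusion by a genuinely different route from the paper's. The paper imports from \cite{MSstabilizer} the existence of a matrix $g\in GL_2(\widetilde{K})$, $\widetilde{K}=K.\FF_{q^2}$, conjugating $GL_2(\FF_q)$ to $H$, and then descends to $GL_2(K)$ by showing that the ratios of the entries of $g$ are Galois-invariant (using $gE_{ij}g^{-1}\in M_2(A)$); the same containment then yields Cremona's criterion $\qq(g_0)^2=(\det g_0)$ essentially for free. You instead produce the conjugator over $K$ directly, by classifying the faithful absolutely irreducible $2$-dimensional representations of $GL_2(\FF_q)$ in defining characteristic (Steinberg's tensor product theorem plus $\det$-twists), and then verify Cremona's criterion through the $M_2(A)$-stable lattice $g^{-1}A^2=\mathfrak{b}\oplus\mathfrak{b}$ and a place-by-place determinant computation. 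Your lattice argument is cleaner and more informative than the paper's entry-by-entry manipulation --- it exhibits the class $[\mathfrak{b}]\in\Cl(A)_2$ to which $H$ corresponds under Theorem \ref{isoto2tors} --- but your first step trades the paper's reliance on \cite{MSstabilizer} for a reliance on modular representation theory. Two small points to tighten there: absolute irreducibility requires the absence of an invariant line over $\overline{K}$, not merely over $K$ (the same exponent argument, run over $\overline{K}$, gives this); and a twist $g\mapsto\det(g)^{j}\,g^{(p^{i})}$ is an automorphism of $GL_2(\FF_q)$ only when it is injective, which holds precisely in the faithful case relevant here. Neither point affects the validity of the proof.
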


\begin{proof}
From the proofs of \cite[Theorem 2.6, Corollary 2.8]{MSstabilizer}, as well as 
\cite[Corollary 2.12]{MSstabilizer} it is clear that there exists 
$$ g= \left[\begin{array}{cc}a&b\\c&d\end{array}\right] \in GL_2(\widetilde{K})$$
\noindent such that $$H=g(GL_2(\FF_q))g^{-1}.$$
\noindent We denote by $\overline{x}$ the image of $x \in \widetilde{K}$ under
 the extension of the Galois automorphism of $\FF_{q^2}/\FF_q$ to $\widetilde{K}$. 
It is clear that $gE_{ij}g^{-1} \in M_2(A)$, where $1 \leq i,j \leq 2$ and so 
$$xy/\Delta(= \overline{xy}/\overline{\Delta}) \in A,$$
\noindent for all $x,y \in \{a,b,c,d\}$, where $\Delta =\det (g)$. 
\\ \\
\noindent Now we may assume without loss of generality that $c \neq 0$. Let $ z \in \{a,b,d\}$. Then
$$c^2/\Delta=\overline{c}^2/\overline{\Delta},\;\;\;cz/\Delta=\overline{cz}/\overline{\Delta}.$$
\noindent It follows that $z/c=\overline{z}/\overline{c}$  so that $z/c \in K$. We now replace $g=M$ with 
$g_0=c^{-1}M$. Then by Theorem \ref{Misquasi}  the map $\kappa_0:G \rightarrow G$ defined 
by $\kappa_0(x)=g_0xg_0^{-1}$  is a quasi-inner automorphism of $G$.   
\end{proof}

\begin{Lem} \label{freeGL}
Let $\kappa_0=\iota_{g_o}$ be a non-trivial quasi-inner automorphism of $G$ and let $H \in \mathcal{H}$. Then
$$\kappa_0(H) \not\sim H.$$ 
\end{Lem}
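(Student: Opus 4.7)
The plan is to prove the lemma by contradiction. Suppose $g_0 H g_0^{-1} = xHx^{-1}$ for some $x \in G$, and set $y := x^{-1}g_0 \in N_{\widehat{G}}(H)$. The desired contradiction --- namely that $\kappa_0 = \iota_{g_0}$ is inner --- follows provided we show
\[
N_{\widehat{G}}(H) \;\subseteq\; Z(K) \cdot G,
\]
since then $g_0 = xy \in Z(K) \cdot G$.

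By Lemma \ref{transGL} there is some $h \in N_{\widehat{G}}(G)$ with $H = h\, GL_2(\FF_q)\,h^{-1}$, so $N_{\widehat{G}}(H) = h\,N_{\widehat{G}}(GL_2(\FF_q))\,h^{-1}$. Because $h$ normalizes $G$ and $Z(K)$ is central in $\widehat{G}$, conjugation by $h$ preserves $Z(K) \cdot G$. The whole problem therefore reduces to proving
\[
N_{\widehat{G}}(GL_2(\FF_q)) \;=\; Z(K) \cdot GL_2(\FF_q).
\]
The inclusion $\supseteq$ is immediate. For the reverse, a direct calculation (imposing commutation of $y = \begin{pmatrix} a & b\\ c & d \end{pmatrix}$ with $\begin{pmatrix} 1 & 1\\ 0 & 1 \end{pmatrix}$ and $\begin{pmatrix} 1 & 0\\ 1 & 1\end{pmatrix}$) yields $C_{\widehat{G}}(GL_2(\FF_q)) = Z(K)$. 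Any $y \in N_{\widehat{G}}(GL_2(\FF_q))$ therefore induces, by conjugation, an automorphism of $GL_2(\FF_q)$ that preserves the trace and determinant of every element, because these invariants are preserved by conjugation inside $GL_2(K)$. One then shows that every such automorphism is inner: a non-trivial Frobenius automorphism alters the trace on $\FF_q \setminus \FF_p$, while the contragredient $g \mapsto g^{-T}$ satisfies $\operatorname{tr}(g^{-T}) = \operatorname{tr}(g)/\det(g)$, which fails trace preservation on suitable elements; the same two arguments rule out all compositions with non-trivial outer factors. Hence $y$ differs from an element of $GL_2(\FF_q)$ by a factor in $Z(K)$, as required.

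The main obstacle is the last step --- the claim that a trace- and determinant-preserving automorphism of $GL_2(\FF_q)$ must be inner. A clean route that avoids invoking the structure of $\operatorname{Aut}(GL_2(\FF_q))$ is a torus argument: after left multiplication by an element of $GL_2(\FF_q)$ we may assume $y$ normalizes the standard diagonal split torus, so $y$ is itself diagonal or anti-diagonal in $GL_2(K)$; then conjugating $\begin{pmatrix} 1 & 1\\ 0 & 1 \end{pmatrix}$ by $y$ forces the ratio of the two non-zero entries of $y$ to lie in $\FF_q^{*}$, which directly yields $y \in Z(K) \cdot GL_2(\FF_q)$. The edge case $q = 2$ (where the diagonal torus is trivial) requires a separate check, but $\operatorname{Out}(GL_2(\FF_2)) = \operatorname{Out}(S_3) = 1$ makes this immediate.
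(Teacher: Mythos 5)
Your reduction is valid and the lemma does follow from it, but you end up proving something genuinely stronger than the paper does, by a partly different route. The paper also reduces, via Lemma \ref{transGL} together with the abelianness of $N_{\widehat{G}}(G)/Z(K).G$, to the case where $g_0$ itself normalizes $GL_2(\FF_q)$, and it likewise uses the Sylow $p$-subgroup $S_p$ to force $g_0$ to be upper triangular; but from that point it finishes \emph{arithmetically}: since $g_0\in N_{\widehat{G}}(G)$, Corollary \ref{M^2}(i) gives $\gamma+\gamma^{-1}\in A$ for the ratio $\gamma$ of the diagonal entries, integral closedness of $A$ forces $\gamma\in\FF_q^*$, and Corollary \ref{M^2}(ii) then puts a scalar multiple of $g_0$ in $G$. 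You instead establish the purely group-theoretic fact $N_{GL_2(K)}(GL_2(\FF_q))=Z(K)\cdot GL_2(\FF_q)$, which is true and makes no use of the hypothesis $g_0\in N_{\widehat{G}}(G)$. This buys a cleaner logical structure (no appeal to the abelianness of $Quinn(G)$ or to Corollary \ref{M^2}), at the cost of having to control an arbitrary element of $GL_2(K)$ normalizing $GL_2(\FF_q)$.

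Two points in your treatment of that normalizer need tightening. First, the route through automorphisms is incomplete as written: $\mathrm{Out}(GL_2(\FF_q))$ is not generated by field automorphisms and the contragredient alone; there are also the central (radial) automorphisms $g\mapsto\chi(\det g)\,g$ for characters $\chi$ of $\FF_q^*$, which you must also exclude (those with $\chi^2=1\neq\chi$ preserve determinants but not traces, so they are excludable, but you have to say so), and in any case this route imports the full classification of $\mathrm{Aut}(GL_2(\FF_q))$. Second, in the torus route the step ``we may assume $y$ normalizes the standard diagonal torus'' is not free: you must show that $yTy^{-1}$, a subgroup of $GL_2(\FF_q)$, is $GL_2(\FF_q)$-conjugate to $T$, e.g.\ because it is the centralizer in $GL_2(\FF_q)$ of a regular element with distinct eigenvalues in $\FF_q^*$ and hence is diagonalizable over $\FF_q$. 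A smoother variant, closer in spirit to the paper's, is to run your argument on the Sylow $p$-subgroups instead of the torus: Sylow's theorem lets you assume $y$ normalizes $S_p$; the unique fixed point of a nontrivial unipotent element forces $y$ to fix $\infty$ and hence to be upper triangular; and conjugating $E_{12}(1)$, a nontrivial diagonal element, and the Weyl element then pins $y$ down to $Z(K)\cdot GL_2(\FF_q)$ uniformly in $q$, with no separate $q=2$ case.
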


\begin{proof} By definition $g_0 \in N_{\widehat{G}}(G) \backslash G.Z(K)$. Suppose to the contrary that 
$$\kappa_0(H)=gHg^{-1}$$
\noindent for some $g \in G$. Replacing $g_0$ with $g^{-1}g_0$ we may assume that $g=1$. Now by Lemma \ref{transGL} 
$H=\kappa_0'(GL_2(\FF_q))$ for some quasi-inner $\kappa_0'= \iota_{g_0'}$, say.
\noindent It follows that 
$$g_1(GL_2(\FF_q))g_1^{-1}=GL_2(\FF_q),$$
\noindent where $g_1=(g_0')^{-1}g_0g_0'$. As $N_{\widehat{G}}(G) / G.Z(K)$ is abelian this implies that   
$$ g_1 \equiv g_0\; (\mod Z(K).G)$$ and so we may further assume that $g_1=g_0$. Let
 $$ S_p=\left\{ T(a)=E_{12}(a): a \in \FF_q \right\}.$$
\noindent Now $S_p$ is a Sylow $p$-subgroup of $GL_2(\FF_q)$ and so from the above 
$$g_0(S_p)g_0^{-1} = h(S_p)h^{-1},$$
\noindent for some $h \in GL_2(\FF_q)$. As above we may assume then that $h=1$. It follows that $g_0$  ``fixes" $\infty$ and so 

$$g_0=\left[\begin{array}{cc}\alpha&*\\0&\beta\end{array}\right].$$ 

\noindent By Corollary \ref{M^2} (i) we note that  
$$(\det(g_0)^{-1}\mathrm{tr}((g_0)^2)=\gamma+\gamma^{-1} \in A,$$
\noindent where $\gamma=\alpha\beta^{-1}$. Since $A$ is integrally closed it follows that $\gamma \in A^*(=\FF_q^*)$. 
Then we can replace $g_0$ with $\beta^{-1}g_0$ which belongs to $G$  by Corollary \ref{M^2} (ii). Thus $g_0 \in Z(K).G$.  
\end{proof}

\begin{Lem}\label{stanedge} Let $e \in \edge(\TTT)$ be incident with $v_s$. Then
$$G_e \lneqq GL_2(\FF_q).$$
 	\end{Lem}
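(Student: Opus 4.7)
The plan is to exploit the natural action of $G_{v_s}$ on the set of edges of $\TTT$ incident to $v_s$. Since $G$ acts on $\TTT$ without inversion, any element that fixes $e$ also fixes $v_s$, so the inclusion $G_e\leq G_{v_s}=GL_2(\FF_q)$ is automatic; the content of the lemma is that it is \emph{strict}. I intend to deduce strictness by showing that the $G_{v_s}$-orbit of $e$ has cardinality at least two, whereupon the orbit--stabilizer theorem forces $|G_e|<|G_{v_s}|$.

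The first step is to recall Serre's local picture of $\TTT$ \cite[Chap.~II]{Serre}: the tree has valency $q^\delta+1$ at every vertex, where $\FF_{q^\delta}$ is the residue field of $K_\infty$. If $L_0$ is a lattice representing $v_s$ and $\pi_\infty$ denotes a uniformizer of $K_\infty$, then the neighbours of $v_s$ in $\TTT$ are in canonical bijection with the set of lines in the two-dimensional $\FF_{q^\delta}$-vector space $L_0/\pi_\infty L_0$, i.e.\ with $\PP^1(\FF_{q^\delta})$. Under this identification the action of $G_{v_s}=GL_2(\FF_q)$ on the set of neighbours (equivalently on the set of edges at $v_s$) factors as the inclusion $GL_2(\FF_q)\hookrightarrow GL_2(\FF_{q^\delta})$ followed by the usual M\"obius action on $\PP^1(\FF_{q^\delta})$.

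The key step is the observation that $GL_2(\FF_q)$ has no common fixed point on $\PP^1(\FF_{q^\delta})$, for any $\delta\geq 1$. This is elementary: the transvection $T(1)=E_{12}(1)$ fixes only $\infty$ in $\PP^1(\FF_{q^\delta})$, while its transpose $E_{21}(1)$ fixes only $0$, so no point of $\PP^1(\FF_{q^\delta})$ is fixed by both. Consequently, for the point $P\in\PP^1(\FF_{q^\delta})$ corresponding to $e$, the $G_{v_s}$-orbit of $P$ has size at least $2$, whence the same holds for the $G_{v_s}$-orbit of $e$. This yields $G_e\lneqq GL_2(\FF_q)$, as required.

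I do not expect a serious obstacle; the proof is essentially a routine invocation of Serre's structural results. The one conceptual point worth flagging is that $G_{v_s}\cong GL_2(\FF_q)$ is a group defined over the small constant field $\FF_q$, whereas the neighbours of $v_s$ are parametrized by $\PP^1$ over the possibly larger residue field $\FF_{q^\delta}$; for $\delta>1$ the action on this set is no longer transitive, but fixed-point-freeness, which is all that is needed here, still holds.
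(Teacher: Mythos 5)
Your proof is correct and follows essentially the same route as the paper: identify the edges at $v_s$ with $\PP^1(\FF_{q^\delta})$ on which $G_{v_s}=GL_2(\FF_q)$ acts by M\"obius transformations, then exhibit elements of $GL_2(\FF_q)$ showing no point is fixed by the whole group (the paper uses the translations for finite points and $\bigl(\begin{smallmatrix}0&1\\1&0\end{smallmatrix}\bigr)$ for $\infty$, where you use $E_{12}(1)$ and $E_{21}(1)$). The only cosmetic difference is that you package the conclusion via orbit--stabilizer rather than pointwise, which changes nothing of substance.
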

 
 \begin{proof} The edges attached to $v_s$ are parametrized by $\PP^1(\FF_{q^{\delta}})$ and $GL_2(\FF_q)$ acts on these as M\"obius transformations. See \cite[Exercise 6), p.99]{Serre}. \\
 If the edge corresponds to $f\in\FF_{q^{\delta}}$ it is not fixed by the translations in $GL_2(\FF_q)$, and if it corresponds to $\infty$, it is not fixed by ${0\ 1\choose 1\ 0}\in GL_2(\FF_q)$.
 \end{proof}

\begin{Prop} \label{noedgeGL}
	No edge of $\TTT$ can have a stabilizer isomorphic to $GL_2(\FF_q)$.
\end{Prop}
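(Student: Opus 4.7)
The plan is to argue by contradiction, using Lemma \ref{transGL} to transport the hypothetical edge stabilizer back to the standard vertex $v_s$, and then derive a contradiction from Lemma \ref{stanedge}.

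Suppose for contradiction that some edge $e$ of $\TTT$ has $G_e \cong GL_2(\FF_q)$, i.e., $G_e \in \mathcal{H}$. By Lemma \ref{transGL} there exists $g \in N_{\widehat{G}}(G)$ such that
$$ G_e \;=\; g \cdot GL_2(\FF_q) \cdot g^{-1} \;=\; g \cdot G_{v_s} \cdot g^{-1}. $$
Since $g \in GL_2(K) \leq GL_2(K_\infty)$, the matrix $g$ acts on $\TTT$ as a M\"obius transformation (preserving the graph structure), so $e' := g^{-1}(e)$ is a well-defined edge of $\TTT$. Its $G$-stabilizer is
$$ G_{e'} \;=\; g^{-1} G_e g \;=\; G_{v_s} \;=\; GL_2(\FF_q). $$

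Next I would examine the fixed point set of $G_{v_s}$ in $\TTT$. Because $G$ acts without inversion on $\TTT$ (as noted in the introduction), the fixed point set of any subgroup is a (possibly empty) subtree, and the stabilizer of an edge is contained in the stabilizer of each of its endpoints. Suppose some vertex $v' \neq v_s$ were fixed by $G_{v_s}$; then the first edge $e_0$ on the geodesic from $v_s$ to $v'$ would also be fixed by $G_{v_s}$, so $G_{v_s} \leq G_{e_0}$. Combined with the reverse containment $G_{e_0} \leq G_{v_s}$, this gives $G_{e_0} = GL_2(\FF_q)$, directly contradicting Lemma \ref{stanedge}. Hence the fixed point subtree of $G_{v_s}$ equals $\{v_s\}$.

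Now let $v_1', v_2'$ be the two endpoints of $e'$. Since $G$ acts without inversion, $G_{e'}$ fixes each of $v_1'$ and $v_2'$; that is, $G_{v_s}$ fixes both. By the previous paragraph, this forces $v_1' = v_2' = v_s$. But $\TTT$ is a tree, so no edge has coincident endpoints, a contradiction. I expect the only subtle step to be the fixed-point-subtree observation in paragraph two — it is where the strength of Lemma \ref{stanedge} is actually used and where the without-inversion hypothesis becomes essential; everything else is formal bookkeeping with Lemma \ref{transGL}.
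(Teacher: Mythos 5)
Your proof is correct and follows essentially the same route as the paper's: conjugate the hypothetical edge stabilizer to $GL_2(\FF_q)=G_{v_s}$ via Lemma \ref{transGL}, then use the geodesic/fixed-point argument (relying on the action being without inversion) together with Lemma \ref{stanedge} to reach a contradiction. The only cosmetic difference is that you package the final step as ``the fixed-point subtree of $G_{v_s}$ is $\{v_s\}$,'' whereas the paper (after invoking Lemma \ref{cyclicsubgroup} to identify the endpoint stabilizers) observes directly that $GL_2(\FF_q)$ would then fix an edge incident with $v_s$.
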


\begin{proof} For odd $\delta$ this follows from \cite[Corollary 2.16]{MSstabilizer}. We provide a proof that holds for all $\delta$.  Suppose to the contrary that there is an edge $e$ whose stabilizer is isomorphic to $GL_2(\FF_q)$. Then by Lemma \ref{cyclicsubgroup} the stabilizers of its terminal vertices are both $G_e$.\\ By Lemma \ref{transGL} and the action of quasi-inner automorphisms on $\TTT$ we can assume that 
$$G_e=GL_2(\FF_q).$$ \\
It follows that $GL_2(\FF_q)$ stabilizes the geodesic from $v_s$ to one of the terminal vertices of $e$ which includes $e$ and hence an edge incident with $v_s$. This contradicts Lemma \ref{stanedge}.
\end{proof}
 \begin{Cor}\label{vertstab} 
Let $H \in \mathcal{H}$. Then there exists a unique vertex $v \in \vert (\TTT)$ such that
	$$ G_v =H.$$  
\end{Cor}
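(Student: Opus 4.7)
The plan is to derive the statement by combining existence (already recorded earlier) with a rigidity argument powered by Proposition \ref{noedgeGL}.

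First, existence is immediate from Lemma \ref{vertGL}: given $H\in\mathcal{H}$, there is some $v\in\vert(\TTT)$ with $H=G_v$. So the only real content is uniqueness, and this is what I would focus on.

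For uniqueness, I would argue by contradiction: suppose $G_v=G_{v'}=H$ for two distinct vertices $v,v'\in\vert(\TTT)$. Since $\TTT$ is a tree there is a unique reduced path $v=v_0,e_1,v_1,e_2,\ldots,v_n=v'$ joining them, with $n\geq 1$. Because $G$ acts on $\TTT$ without inversion, any element fixing both endpoints of this path must fix the entire path pointwise, edges included. Hence every element of $H$ stabilizes the edge $e_1$ incident with $v$, i.e. $H\leq G_{e_1}$. On the other hand $G_{e_1}\leq G_{v_0}=G_v=H$, so $G_{e_1}=H\cong GL_2(\FF_q)$.

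This directly contradicts Proposition \ref{noedgeGL}, which says no edge of $\TTT$ has stabilizer isomorphic to $GL_2(\FF_q)$. Therefore $v=v'$, proving uniqueness. The main (and only nontrivial) point is the observation that fixing two vertices of a tree under an action without inversion forces fixing of the connecting edges; once one has that, Proposition \ref{noedgeGL} closes the argument immediately, so I would not expect any real obstacle beyond stating these two ingredients cleanly.
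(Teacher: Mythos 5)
Your proof is correct and follows essentially the same route as the paper, which derives the corollary from Lemma \ref{vertGL} (existence) and Proposition \ref{noedgeGL} (uniqueness); you have simply spelled out the standard geodesic-fixing argument that the paper leaves implicit. One minor remark: the ``without inversion'' hypothesis is not actually needed for your key step, since an automorphism of a tree fixing two vertices automatically fixes the unique geodesic between them pointwise, edges included.
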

\begin{proof} Follows from Lemma \ref{vertGL} and Proposition \ref{noedgeGL}.
\end{proof}
\begin{Rk} Another interesting consequence of Lemma \ref{transGL} and Proposition \ref{noedgeGL} is the following. Suppose that $G_v \in \mathcal{H}$. Then there exists $\kappa=\iota_g$ such that  $\kappa(v)=v_s$
Since $\kappa$ is an automorphism of $\TTT$ the action of $G_v$ on the $q^{\delta}+1$ edges of $\TTT$ incident with $v$ is identical to the action of $GL_2(\FF_q)$ on the edges of $\TTT$ incident with $v_s$ as described in Lemma \ref{stanedge}.
 \end{Rk}
\begin{Def} By definition
$$\vert(G \backslash \TTT)=\left\{Gv: v \in \vert(\TTT) \right\}.$$
We put $\widetilde{v}=Gv$ and define its \bf stabilizer \rm
$$G_{\widetilde{v}}=(G_v)^G.$$
We refer to $G_{\widetilde{v}}$ as being \bf isomorphic to $G_v$.
\end{Def}
 
\begin{Lem}\label{bijection}  There exists a bijection
$$\mathcal{H}^G \longleftrightarrow \left\{ \widetilde{v} \in \vert(G \backslash \TTT): G_v \in \mathcal{H} \right\}.$$
\end{Lem}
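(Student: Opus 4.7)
The plan is to use Corollary \ref{vertstab} to define a natural map from the left-hand side to the right-hand side and verify that it is a bijection. By Corollary \ref{vertstab}, for each $H\in\mathcal{H}$ there is a \emph{unique} vertex $v_H\in\vert(\TTT)$ with $G_{v_H}=H$. This uniqueness is the engine that makes everything work.

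First I would define
$$\Phi:\mathcal{H}^G \longrightarrow \{\widetilde{v}\in\vert(G\backslash\TTT): G_v\in\mathcal{H}\},\qquad H^G \longmapsto Gv_H.$$
To check that $\Phi$ is well-defined, suppose $H'=gHg^{-1}$ for some $g\in G$. Since $G_{gv_H}=gG_{v_H}g^{-1}=H'$, the uniqueness clause of Corollary \ref{vertstab} forces $v_{H'}=gv_H$, so $Gv_{H'}=Gv_H$. Moreover, $G_{v_H}=H\in\mathcal{H}$, so $\Phi(H^G)$ indeed lies in the target set.

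For injectivity, suppose $\Phi(H_1^G)=\Phi(H_2^G)$. Writing $v_i=v_{H_i}$, this means $Gv_1=Gv_2$, hence $v_2=gv_1$ for some $g\in G$. Then $H_2=G_{v_2}=G_{gv_1}=gG_{v_1}g^{-1}=gH_1g^{-1}$, so $H_1^G=H_2^G$. For surjectivity, given $\widetilde{v}$ with $G_v\in\mathcal{H}$, the class $(G_v)^G$ lies in $\mathcal{H}^G$, and by uniqueness the unique vertex with stabilizer $G_v$ is $v$ itself, so $\Phi((G_v)^G)=Gv=\widetilde{v}$.

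There is no real obstacle here: the content of the lemma is already packaged into Corollary \ref{vertstab}, and the argument is the standard orbit/stabilizer bookkeeping. The only thing to be careful about is to invoke uniqueness (not merely existence) at both the well-definedness step and the injectivity step, since without uniqueness one could not conclude $v_{H'}=gv_H$ nor pass from a common orbit back to a conjugacy of stabilizers.
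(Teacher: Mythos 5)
Your proof is correct and is exactly the argument the paper intends: its one-line proof (``Follows from Corollary \ref{vertstab} and the above'') is precisely the orbit/stabilizer bookkeeping you spell out, with the uniqueness clause of Corollary \ref{vertstab} doing the work at the well-definedness and injectivity steps. Nothing to add.
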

\begin{proof} Follows from Corollary \ref{vertstab} and the above.
\end{proof}
	
\noindent It is clear that $Quinn(G)$ acts on $\mathcal{H}^G$. Since $Z(K)$, represented by scalar matrices, acts trivially on $\TTT$, it is also clear that $Quinn(G)$ acts on $G \backslash \TTT$. We now come to the principal result in this section which follows from Lemmas \ref{transGL}, \ref{freeGL} and \ref{bijection}.

\begin{Thm} \label{transandfree}
$Quinn(G)$ acts freely and transitively on 
\begin{itemize}
\item[(i)] the conjugacy classes of subgroups of $G$ which are isomorphic to $GL_2(\FF_q)$,
\item[(ii)] the vertices of $G\backslash \TTT$ whose stabilizers are isomorphic to $GL_2(\FF_q)$ .
\end{itemize}
\end{Thm}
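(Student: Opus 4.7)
The plan is to reduce Theorem \ref{transandfree} directly to the three preceding lemmas \ref{transGL}, \ref{freeGL}, and \ref{bijection}, without any new calculation.

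For part (i), I would first observe that $GL_2(\FF_q)$ itself lies in $\mathcal{H}$, being the stabilizer $G_{v_s}$ of the standard vertex, so its $G$-conjugacy class is a legitimate point of $\mathcal{H}^G$. To prove transitivity of the $Quinn(G)$-action on $\mathcal{H}^G$, I take any $H \in \mathcal{H}$ and invoke Lemma \ref{transGL} to produce a quasi-inner automorphism $\kappa$ with $\kappa(GL_2(\FF_q)) = H$; then $\kappa$ sends the class of $GL_2(\FF_q)$ to $H^G$. Freeness is immediate from Lemma \ref{freeGL}: if some $\kappa_0 \in Quinn(G)$ fixes $H^G$, then $\kappa_0(H) \sim H$ in $G$, forcing $\kappa_0$ to be trivial.

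For part (ii), I would carry everything across via the bijection of Lemma \ref{bijection}, which pairs the conjugacy class $H^G$ with the unique vertex orbit $\widetilde{v} \in \vert(G \backslash \TTT)$ satisfying $G_v = H$ (uniqueness coming from Corollary \ref{vertstab}). The substantive point to check is that this bijection is $Quinn(G)$-equivariant: for $\kappa = \iota_g$ the vertex $v$ is sent to $g(v)$, and $G_{g(v)} = g G_v g^{-1} = \kappa(H)$, whose $G$-conjugacy class is exactly the $\kappa$-image of $H^G$. Once equivariance is in place, the free and transitive action on $\mathcal{H}^G$ transfers verbatim to the corresponding set of vertex orbits.

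I do not foresee a genuine obstacle here; the only item that deserves explicit mention is the equivariance of the bijection, and even that is essentially tautological because both actions of $Quinn(G)$ (conjugation of subgroups, and the M\"obius action on $\TTT$) are induced by the same element $g \in N_{\widehat{G}}(G)$. Thus the proof amounts to a short concatenation of Lemmas \ref{transGL}, \ref{freeGL}, \ref{bijection} and Corollary \ref{vertstab}.
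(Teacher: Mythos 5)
Your proposal is correct and follows essentially the same route as the paper, which likewise deduces the theorem directly from Lemmas \ref{transGL}, \ref{freeGL} and \ref{bijection} (the paper states no further argument). Your explicit check of the $Quinn(G)$-equivariance of the bijection is a worthwhile detail the paper leaves implicit.
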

\noindent A special case of this result is provided by Corollary \ref{exquasi-inner}.
\begin{Cor} \label{Corodd} 
Suppose that $|\Cl(A)|$ is odd. Then \begin{itemize}
\item[(i)] every subgroup $H$ of $G$ isomorphic to $GL_2(\FF_q)$
is actually conjugate in $G$ to $GL_2(\FF_q)$,
\item[(ii)] the only vertex in $G\setminus\TTT$ whose stabilizer is isomorphic to $GL_2(\FF_q)$ is 
$\widetilde{v_s}$, the image of the standard vertex $v_s$.
\end{itemize}
\end{Cor}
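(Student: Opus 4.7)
The plan is to derive both parts directly from Theorem \ref{transandfree} by observing that the hypothesis forces $Quinn(G)$ to be trivial, after which ``freely and transitively'' collapses to ``the set in question is a singleton.''

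First I would invoke Theorem \ref{isoto2tors} (or equivalently Corollary \ref{exquasi-inner}): since $Quinn(G) \cong \Cl(A)_2$ and $|\Cl(A)|$ is odd, the $2$-torsion subgroup $\Cl(A)_2$ is trivial, and therefore $Quinn(G) = 1$.

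For part (i), Theorem \ref{transandfree}(i) asserts that $Quinn(G)$ acts freely and transitively on $\mathcal{H}^G$. With $Quinn(G)$ trivial, transitivity forces $\mathcal{H}^G$ to consist of a single conjugacy class. The subgroup $GL_2(\FF_q) \leq G$ is itself an element of $\mathcal{H}$ (it stabilizes the standard vertex $v_s$, by the remark following Lemma \ref{vertGL}), so every $H \in \mathcal{H}$ must lie in the same $G$-conjugacy class as $GL_2(\FF_q)$, which is the assertion of (i).

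For part (ii), Theorem \ref{transandfree}(ii) together with $Quinn(G) = 1$ shows that the set $\{\widetilde{v} \in \vert(G\backslash\TTT) : G_v \in \mathcal{H}\}$ is a singleton. Since $\widetilde{v_s}$ belongs to this set (as $G_{v_s} = GL_2(\FF_q) \in \mathcal{H}$), it must be the unique such vertex. No real obstacle arises here: the whole content of the corollary is packaged into the transitivity clause of Theorem \ref{transandfree}, and the only thing to verify is that the ``reference'' objects $GL_2(\FF_q)$ and $v_s$ genuinely lie in the relevant sets, which is immediate.
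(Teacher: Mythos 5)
Your proposal is correct and follows exactly the route the paper intends: the corollary is stated as an immediate special case of Theorem \ref{transandfree}, obtained by noting that odd $|\Cl(A)|$ forces $\Cl(A)_2$, and hence $Quinn(G)$, to be trivial, so transitivity makes the relevant orbit a singleton containing $GL_2(\FF_q)$ (resp.\ $\widetilde{v_s}$). Nothing is missing.
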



\section{Action on elliptic points}\label{sect:elliptic}
\noindent Throughout this section we assume that $\delta$ is \it odd. \rm 
\noindent Recall that
$$\Ell(G)=\left\{G\omega: \omega \in E(G) \right\}$$
\noindent denotes the elliptic points of the Drinfeld modular curve $G \backslash \Omega$.
\begin{Def} We define
$$\Ell(G)^{=}=\left\{G\omega: G\omega=G\overline{\omega} \right\}\;\mathrm{and}\;\Ell(G)^{\neq}=\left\{G\omega: G\omega\neq G\overline{\omega} \right\}.$$
\end{Def}
\noindent (In \cite[Section 3]{MSelliptic} $\Ell(G)^=$ is denoted by $\Ell(G)_2$.) \\
\noindent The action of an element of $GL_2(K_{\infty})$ on an element of $\Omega$ will always refer to its action as a M\"obius transformation. We record the following.
\begin{Lem}\label{Ginv} Let $g \in N_{\widehat{G}}(G)$ and $\omega \in E(A)$. Then
\begin{itemize}
\item[(i)] $g(\omega) \in E(A)$,
\item[(ii)] $\overline{g(\omega)}=g(\overline{\omega}).$
\end{itemize}
\end{Lem}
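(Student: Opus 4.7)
For (i), the plan is to lift the fixing relation $M(\omega)=\omega$ coming from the defining property of $\omega \in E(A)$ (with $M \in G$ non-scalar) via conjugation by $g$. Setting $M' := gMg^{-1}$, a one-line computation gives $M'(g(\omega)) = g(M(\omega)) = g(\omega)$. Since $g \in N_{\widehat{G}}(G)$, the matrix $M'$ lies in $G$, and it stays non-scalar because a scalar conjugate forces a scalar pre-image. The only remaining issue is that $g(\omega)$ must lie in $\Omega = C_\infty \setminus K_\infty$: if instead $g(\omega) \in K_\infty \cup \{\infty\}$, then applying $g^{-1} \in GL_2(K) \subset GL_2(K_\infty)$, which preserves $\PP^1(K_\infty)$, would pull this back to $\omega \in K_\infty \cup \{\infty\}$, contradicting $\omega \in E(A) \subset \Omega$.

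For (ii), I would exploit that if $g = \left[\begin{array}{cc} a & b \\ c & d \end{array}\right]$ then the Möbius action is $g(\omega) = (a\omega + b)/(c\omega + d)$, a rational expression whose coefficients $a,b,c,d$ lie in $K$ and are therefore fixed pointwise by the Galois involution $\omega \mapsto \overline{\omega}$ of $\widetilde{K}/K$. Applying this involution termwise yields formally
\[
\overline{g(\omega)} \;=\; \frac{a\overline{\omega} + b}{c\overline{\omega} + d} \;=\; g(\overline{\omega}).
\]
To justify the expression on the right, one has to check that $c\overline{\omega}+d \neq 0$, i.e.\ $\overline{\omega} \notin K_\infty$. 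This follows because $\overline{\omega}$ is itself elliptic: the fixing matrix $M \in G \subset GL_2(A)$ has entries fixed by the involution, so applying the involution to $M(\omega)=\omega$ gives $M(\overline{\omega})=\overline{\omega}$, and elliptic elements lie in $\Omega$ by (i) (and by the general fact that the quadratic defining $\omega$ stays irreducible over $K_\infty$ precisely when its roots lie outside $K_\infty$).

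The only real subtlety — more bookkeeping than a genuine obstacle — is confirming that the Galois involution of $\widetilde{K}/K$, as used in the definition of $\overline{\omega}$ and extended to $C_\infty$, really agrees on $\omega$ with the involution of $K(\omega)/K$ swapping the two roots of the minimal polynomial of $\omega$. Once that identification is made, both parts reduce to routine manipulations: conjugation of matrices by $g \in N_{\widehat{G}}(G)$ for (i), and commutativity of a $K$-linear fractional transformation with a $K$-fixing field automorphism for (ii).
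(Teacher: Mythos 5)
Your proposal is correct and follows exactly the argument the paper treats as immediate: the lemma is stated without proof (``We record the following''), with part (i) resting on the conjugation identity $G^{g(\omega)}=(G^{\omega})^{g}\leq G$ already noted in Section~\ref{sect:quasi-inner}, and part (ii) on the fact that the entries of $g$ lie in $K$ and so commute with the Galois involution of $\widetilde{K}/K$. Your extra checks (that $g(\omega)$ stays in $\Omega$, that denominators do not vanish, and that $K(\omega)=\widetilde{K}$ so the two involutions coincide) are all sound and merely make explicit what the authors leave tacit.
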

\noindent It is clear then that $Quinn(G)$ acts on both $\Ell(G)^=$ and $\Ell(G)^{\neq}$. \\
\noindent In this section our approach is based on \cite[Sections 3,4]{MSelliptic}. We recall some details.
\begin{Def} Let $I$ be an $A$-ideal (resp. $\widetilde{A}$-ideal). Then $[I]$ denotes the image of $I$ in $\Cl(A)$ (resp. $\Cl(\widetilde{A}))$.
\end{Def}
\noindent Fix $\varepsilon\in\FF_{q^2}\setminus\FF_q$. By \cite[Theorem 2.5]{MSelliptic} any elliptic point $\omega$ of $G$ can be written as $\omega=\frac{\varepsilon+s}{t}$ where $s,t\in A$ and $t$ divides 
$(\overline{\varepsilon} +s)(\varepsilon+s)$ in $A$. Now let
$$J_{\omega}=A(\varepsilon+s)+At.$$ 
\noindent It is known \cite[Lemmas 3.1, 3.2]{MSelliptic} that
\begin{itemize}
\item[(i)] $J_{\omega}$ is an $\widetilde{A}$-ideal.
\item[(ii)] $J_{\omega}$ is independent of the choice of $\varepsilon\in\FF_{q^2}\setminus\FF_q$.
\item[(iii)] Let $\omega, \omega' \in E(A)$. Then 
$$ G\omega=G\omega' \Longleftrightarrow [J_{\omega}]=[J_{\omega'}]\;\;\;(\mathrm{in} \; \Cl(\widetilde{A})).$$
\end{itemize}
\noindent Let $\alpha$ be the Galois automorphism of $\widetilde{K}/K$ (which extends that of $\FF_{q^2}/\FF_q$). Let $k \in \widetilde{K}$. Then the \it norm \rm of $k$ is $k\overline{k}$, where $\overline{k}= \alpha(k)$. Now $\alpha$ restricts to $\widetilde{A}$ and so acts on its ideals and hence its ideal class group. For each $\widetilde{A}$-ideal, $J$, the \it norm \rm of $J$, $N(J)=A \cap(J\bar{J})$, which is an $A$-ideal. We now come to the \it norm map \rm
$$\overline{N} : \Cl(\widetilde{A}) \rightarrow \Cl(A),$$
\noindent where $\overline{N}([I])=[(I\bar{I})\cap A]$. Then
$$[I] \in \mathrm{ker}\; \overline{N} \Longleftrightarrow (I\bar{I})\cap A\; \mathrm{ is\; a\; principal}\; A\mathrm{-ideal}.$$
\noindent We restate \cite[Theorem 3.4]{MSelliptic}.
\begin{Thm}\label{norm} The map $\omega \mapsto [J_{\omega}] $ induces a one-one correspondence 
$$\Ell(G) \longleftrightarrow \mathrm{ker} \; \overline{N}.$$
\end{Thm}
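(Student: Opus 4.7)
The plan is to establish the correspondence in three stages: (a) well-definedness of the map on $G$-orbits, (b) verifying that its image lands in $\ker\overline{N}$ and that the map is injective, and (c) constructing a preimage for every class in $\ker\overline{N}$.

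For (a), well-definedness is essentially handed to us: by property (iii) stated just before the theorem (i.e.\ \cite[Lemmas 3.1, 3.2]{MSelliptic}) the class $[J_\omega]\in\Cl(\widetilde A)$ depends only on the $G$-orbit $G\omega$, and the same statement immediately yields injectivity of the induced map $\Ell(G)\to\Cl(\widetilde A)$.

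For (b), I would verify directly that $\overline{N}([J_\omega])$ is trivial. Writing $\omega=(\varepsilon+s)/t$ and $J_\omega=A(\varepsilon+s)+At$, one computes
\begin{equation*}
J_\omega\overline{J_\omega}
=A(\varepsilon+s)(\overline{\varepsilon}+s)+A(\varepsilon+s)t+A(\overline{\varepsilon}+s)t+At^2
=t\cdot\bigl(AN+A(\varepsilon+s)+A(\overline{\varepsilon}+s)+At\bigr),
\end{equation*}
where $N=(\varepsilon+s)(\overline{\varepsilon}+s)/t\in A$ by the divisibility hypothesis on $\omega$. The ideal in the brackets contains the difference $(\varepsilon+s)-(\overline{\varepsilon}+s)=\varepsilon-\overline{\varepsilon}$, which is a nonzero element of $\FF_{q^2}\subset\widetilde A^*$, hence a unit. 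Thus $J_\omega\overline{J_\omega}=t\widetilde A$, and $N(J_\omega)=A\cap t\widetilde A=tA$ is a principal $A$-ideal, so $[J_\omega]\in\ker\overline{N}$.

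For (c) surjectivity, which I expect to be the main obstacle, I would start with an ideal class $[I]\in\ker\overline{N}$ and try to produce an elliptic point mapping to it. By hypothesis $I\overline{I}\cap A=(t)$ for some $t\in A$, and a standard descent argument (using that $I\overline{I}$ is Galois-stable) should upgrade this to $I\overline{I}=t\widetilde A$. I would then exhibit a two-element generating set of $I$ of the particular shape $I=A(\varepsilon+s)+At$ by choosing an element $\beta\in I$ whose norm lies in $tA\setminus t\mm$ for every prime $\mm$ of $A$ dividing $t$ (using the Chinese Remainder Theorem and the fact that $I/tI$ is generated as an $A/tA$-module by the image of any such $\beta$ because $I\overline{I}=t\widetilde A$ forces $\beta$ to complete a basis modulo $t$). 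Writing this $\beta$ in the form $\varepsilon'+s'$ with $s'\in A$ and $\varepsilon'\in\FF_{q^2}\setminus\FF_q$, and then rescaling to replace $\varepsilon'$ by our fixed $\varepsilon$, the divisibility $t\mid(\varepsilon+s)(\overline{\varepsilon}+s)$ comes for free from $\beta\overline{\beta}\in tA$; this in turn shows by \cite[Theorem 2.5]{MSelliptic} that $\omega=(\varepsilon+s)/t$ lies in $E(A)$, with $J_\omega=I$ by construction, and the proof is complete.

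The delicate step is the construction of the generator $\beta$; if the direct approach above runs into obstructions at primes where $t$ ramifies or is inert in $\widetilde A/A$, I would fall back on choosing $I$ within its ideal class to be coprime to any prescribed finite set of primes of $\widetilde A$ (possible since $\widetilde A$ is a Dedekind domain), which makes the local analysis at each prime dividing $t$ completely transparent.
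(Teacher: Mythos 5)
The paper does not actually prove this statement: the sentence immediately preceding it reads ``We restate \cite[Theorem 3.4]{MSelliptic}'', so the result is imported wholesale from the earlier paper and there is no internal argument to compare yours against. Judged on its own, your steps (a) and (b) are correct and complete: well-definedness and injectivity are exactly the backward and forward directions of property (iii), and your computation $J_\omega\overline{J_\omega}=t\widetilde{A}$ is sound, since the bracketed module is $t^{-1}J_\omega\overline{J_\omega}$, hence an $\widetilde{A}$-ideal, and it contains the unit $\varepsilon-\overline{\varepsilon}$.

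The gap is in (c), and it is genuine. Your choice of $\beta\in I$ with $v_{\pp}(N(\beta))=v_{\pp}(t)$ at every $\pp\mid t$ does give $I=\widetilde{A}\beta+\widetilde{A}t$ as $\widetilde{A}$-ideals, but what is required is the presentation $I=A(\varepsilon+s)+At$ as an $A$-module, with the $\varepsilon$-coefficient of the generator a \emph{unit constant}; your parenthetical claim that $I/tI$ is cyclic over $A/tA$ is false ($I/tI\cong\widetilde{A}/t\widetilde{A}$ is free of rank $2$ over $A/tA$), and the two notions of generation genuinely differ. Concretely, let $\pp$ be a prime of $A$ of order $2$ in $\Cl(A)$ with $\pp^2=(t)$ and take $I=\pp\widetilde{A}$: then $[I]\in\ker\overline{N}$, yet $I=\pp\oplus\pp\varepsilon$ contains no element with unit $\varepsilon$-coefficient at all, and $I\cap A=\pp\neq(t)$ whereas any ideal of the form $A(\varepsilon+s)+At$ meets $A$ exactly in $(t)$. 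So for this representative no admissible $\beta$ exists, and one must change the representative of the class; your fallback (coprimality to a prescribed finite set of primes) does not repair this, since the obstruction sits at all inert primes and at split primes where $I$ is imprimitive. The missing ingredient is essentially the Steinitz-class identity $\wedge^2_A I\cong N(I)$ (valid because $\widetilde{A}=A\oplus A\varepsilon$ is $A$-free): it shows that $[I]\in\ker\overline{N}$ forces $I$ to be a free rank-$2$ $A$-module, hence, after rescaling by an element of $\widetilde{K}^*$, of the form $A\omega+A$ with $\omega\in\widetilde{K}\setminus K$, which is then an elliptic point with $[J_\omega]=[I]$. Either that, or an explicit good choice of representative (e.g.\ a split prime of $\widetilde{A}$ in the given class), has to be supplied before (c) closes.
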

\noindent For each $\omega$ it is known that
\begin{itemize}
\item[(i)] $\overline{J_{\omega}}=J_{\overline{\omega}}$,
\item[(ii)] $J_{\omega}J_{\overline{\omega}}$ is a principal $A$-ideal.
\end{itemize}
\noindent It follows that 
$$\mathrm{ker} \; \overline{N}=\left\{ [J_{\omega}]: [J_{\overline{\omega}}]=[J_{\omega}]^{-1} \right\}.$$

\noindent We recall from Theorem \ref{isoto2tors} that $Quinn(G)$ can be identified with $\Cl(A)_2$. From this and  Theorem \ref{norm} we are able to study the action of $Quinn(G)$ on $\Ell(G)$. For this purpose we require two further lemmas.
\begin{Lem}\label{canonical} Let $\iota: \Cl(A) \rightarrow \Cl(\widetilde{A})$ be the canonical map, where
$\iota([I])=[I\widetilde{A}],\;(I\unlhd A).$ Then
\begin{itemize}
\item[(i)] $\iota$ is injective.
\item[(ii)] $\left\{ [I] \in \Cl(\widetilde{A}); [I]=[\overline{I}] \right\}= \iota (\Cl(A))$. \end{itemize}
\end{Lem}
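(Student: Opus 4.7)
The plan is to prove both parts by a standard Galois descent argument combined with two applications of Hilbert 90, using that $\widetilde{K}/K$ is a separable quadratic extension with Galois group $\langle\alpha\rangle$ and that $\widetilde{A}$ is a free $A$-module of rank $2$ (an $A$-basis being $\{1,\varepsilon\}$ for any $\varepsilon\in\FF_{q^2}\setminus\FF_q$). In particular, $\widetilde{A}\cap K=A$ and $\widetilde{A}^*=\FF_{q^2}^*$, $A^*=\FF_q^*$.

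For (i), suppose $[I]\in\ker\iota$, so $I\widetilde{A}=\gamma\widetilde{A}$ for some $\gamma\in\widetilde{K}^*$. Since $I\subseteq A$ is fixed elementwise by $\alpha$, applying $\alpha$ gives $\gamma\widetilde{A}=\overline{\gamma}\widetilde{A}$, hence $u:=\gamma/\overline{\gamma}\in\widetilde{A}^*=\FF_{q^2}^*$ is an element of norm $1$ over $\FF_q$. By Hilbert~90 for $\FF_{q^2}/\FF_q$, $u=v/\overline{v}$ for some $v\in\FF_{q^2}^*$. Then $\beta:=\gamma/v\in\widetilde{K}$ satisfies $\overline{\beta}=\beta$, so $\beta\in K$, and $I\widetilde{A}=\beta\widetilde{A}$. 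Intersecting with $K$ and using $\widetilde{A}=A\oplus A\varepsilon$ gives $I\widetilde{A}\cap K=I$ and $\beta\widetilde{A}\cap K=\beta A$, hence $I=\beta A$ is principal.

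For (ii), the inclusion $\iota(\Cl(A))\subseteq\{[I]:[I]=[\overline{I}]\}$ is immediate since $\overline{J\widetilde{A}}=J\widetilde{A}$ whenever $J\unlhd A$. For the reverse inclusion, suppose $[I]=[\overline{I}]$, i.e.\ $I=\gamma\overline{I}$ with $\gamma\in\widetilde{K}^*$. Applying $\alpha$ and combining yields $\gamma\overline{\gamma}=1$, so by Hilbert~90 for $\widetilde{K}/K$ we can write $\gamma=\delta/\overline{\delta}$ with $\delta\in\widetilde{K}^*$. Then $I':=\delta^{-1}I$ satisfies $\overline{I'}=I'$ as subsets of $\widetilde{K}$, and $[I]=[I']$ in $\Cl(\widetilde{A})$. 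It therefore suffices to show that any $\alpha$-stable fractional $\widetilde{A}$-ideal $I'$ is of the form $J\widetilde{A}$ for $J:=I'\cap K$.

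To finish, write each $x\in I'$ uniquely as $x=a+b\varepsilon$ with $a,b\in K$, using the basis $\{1,\varepsilon\}$. Then $\overline{x}=a+b\overline{\varepsilon}\in I'$, so $x-\overline{x}=b(\varepsilon-\overline{\varepsilon})\in I'$; since $\varepsilon-\overline{\varepsilon}\in\FF_{q^2}^*\subseteq\widetilde{A}^*$, this gives $b\in I'\cap K=J$, and hence $a=x-b\varepsilon\in I'\cap K=J$ as well. Thus $I'\subseteq J+J\varepsilon=J\widetilde{A}$, and the reverse containment is clear, so $I'=J\widetilde{A}=\iota([J])$. The main point to handle with care is characteristic $2$, where $\varepsilon-\overline{\varepsilon}$ is still a nonzero element of $\FF_q^*$ (the trace of $\varepsilon$ differs from $2\varepsilon$), so the argument goes through uniformly; this is the only place where one must check that no essential use of the identity $x+\overline{x}=2a$ is being made.
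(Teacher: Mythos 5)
Your proof is correct in substance but follows a genuinely different route from the paper's. The paper disposes of both parts in two lines: it quotes the analogous statements for the canonical map $\Cl^0(K)\to\Cl^0(\widetilde{K})$ from Rosen (Corollary to Proposition 11.10) and transfers them to $\Cl(A)\to\Cl(\widetilde{A})$ via the exact sequence $0\to\Cl^0(K)\to\Cl(A)\to\ZZ/\delta\ZZ\to 0$, using that $\delta$ is odd so that $\infty$ is inert in $\widetilde{K}$. You instead give a self-contained Galois-descent argument: Hilbert 90 for $\FF_{q^2}/\FF_q$ in part (i), Hilbert 90 for $\widetilde{K}/K$ plus the explicit decomposition $\widetilde{A}=A\oplus A\varepsilon$ in part (ii). What your approach buys is independence from the divisor-class-group machinery and from the reference; what it costs is that you must justify the two ring-theoretic inputs you list as ``in particular'': the freeness $\widetilde{A}=A\oplus A\varepsilon$ (true for constant field extensions, and provable by the same $x-\overline{x}$ trick you use at the end) and, crucially, $\widetilde{A}^*=\FF_{q^2}^*$. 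The latter does \emph{not} follow from freeness; it holds precisely because $\delta$ is odd, so there is only one place of $\widetilde{K}$ above $\infty$. (For $\delta$ even, $\widetilde{A}^*$ is infinite and your part (i) breaks down; this is consistent with the paper, since Section 4 assumes $\delta$ odd throughout, but you should say so explicitly.)

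One step needs patching. In part (ii) you assert that applying $\alpha$ to $I=\gamma\overline{I}$ ``yields $\gamma\overline{\gamma}=1$''. It only yields $\gamma\overline{\gamma}\,\overline{I}=\overline{I}$, hence $\gamma\overline{\gamma}\in\widetilde{A}^*\cap K^*=\FF_q^*$, not $=1$. This is easily repaired: the norm $\FF_{q^2}^*\to\FF_q^*$ is surjective, so choose $d\in\FF_{q^2}^*$ with $d\overline{d}=\gamma\overline{\gamma}$ and replace $\gamma$ by $\gamma/d$ (this does not change $\gamma\overline{I}$, as $d\in\widetilde{A}^*$); the new $\gamma$ has norm $1$ and Hilbert 90 applies. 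With that correction, and noting that $\varepsilon-\overline{\varepsilon}$ only needs to lie in $\FF_{q^2}^*$ (which it does in every characteristic, since $\varepsilon\notin\FF_q$), your argument is complete.
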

\begin{proof} The analagous statements are known to hold for the canonical map from $  \Cl^0(K) \to \Cl^0(\widetilde{K})$. See \cite[Corollary to Proposition 11.10]{Rosen}. The results follow from the exact sequence in Section 2, since $\delta$ is odd and the infinite place is inert in $\widetilde{K}$.
\end{proof}

\begin{Lem}\label{2torsion} With the above notation, the $2$-torsion in $\Cl(A)$,
$$\Cl(A)_2 \cong \iota(\Cl(A)_2)= (\mathrm{ker}\;\overline{N})_2,$$
the $2$-torsion in $\mathrm{ker}\;\overline{N}$.
\end{Lem}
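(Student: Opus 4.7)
The plan is to prove the lemma in two stages. The isomorphism $\Cl(A)_2 \cong \iota(\Cl(A)_2)$ is immediate from Lemma \ref{canonical}(i), since an injective homomorphism restricts to an isomorphism on any subgroup. So the real content is the equality $\iota(\Cl(A)_2) = (\ker \overline{N})_2$, which I intend to verify by a standard two-inclusion argument hinging on the observation that the composition $\overline{N} \circ \iota : \Cl(A) \to \Cl(A)$ is the squaring map.

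First I would establish that identity. For an $A$-ideal $I$, Galois fixes $I$ pointwise, so $\overline{I\widetilde{A}} = \overline{I}\cdot\widetilde{A} = I\widetilde{A}$, and therefore $\iota([I])\cdot\overline{\iota([I])} = [I^2 \widetilde{A}]$. Intersecting with $A$ and using the standard fact that $J\widetilde{A} \cap A = J$ for every $A$-ideal $J$ (valid because $\widetilde{A}/A$ is a finite separable extension of Dedekind domains in which no prime of $A$ dies), we get $\overline{N}(\iota([I])) = [I^2]$. From this squaring interpretation, if $[I] \in \Cl(A)_2$ then $\iota([I])$ has order dividing $2$ and lies in $\ker\overline{N}$, giving the inclusion $\iota(\Cl(A)_2) \subseteq (\ker \overline{N})_2$.

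For the reverse inclusion, take $[J] \in (\ker \overline{N})_2$. Since $[J] \in \ker\overline{N}$, the $A$-ideal $J\overline{J} \cap A$ is principal, which (together with the fact recorded just before Theorem \ref{norm} that $J\overline{J}$ equals its own contraction when we view $J$ as arising from elliptic data, or more directly from $[J]\cdot[\overline{J}] = 1$ in $\Cl(\widetilde{A})$) yields $[\overline{J}] = [J]^{-1}$. Combining with $[J]^2 = 1$ gives $[\overline{J}] = [J]$. Lemma \ref{canonical}(ii) then provides $[I] \in \Cl(A)$ with $\iota([I]) = [J]$. Applying $\overline{N}$ and the squaring identity, $[I]^2 = \overline{N}(\iota([I])) = \overline{N}([J]) = 1$, so $[I] \in \Cl(A)_2$ and $[J] \in \iota(\Cl(A)_2)$.

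The main obstacle is the lemma-free verification that $\overline{N} \circ \iota$ is squaring, and in particular the contraction identity $I^2\widetilde{A} \cap A = I^2$; everything else is formal manipulation of the two homomorphisms together with Lemma \ref{canonical}. I would be careful to invoke the oddness of $\delta$ only where Lemma \ref{canonical} already uses it (to ensure $\iota$ is injective and has the right image), since the computational step $\overline{I\widetilde{A}} = I\widetilde{A}$ does not depend on that hypothesis.
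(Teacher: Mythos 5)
Your proposal is correct and follows essentially the same route as the paper: the paper's proof likewise shows $\overline{N}(\iota([I]))=\iota([I])\,\overline{\iota([I])}=\iota([I]^2)=1$ for the forward inclusion (i.e.\ that $\overline{N}\circ\iota$ is squaring, using that Galois fixes extended ideals), and for the converse deduces $[\overline{J}]=[J]^{-1}=[J]$ from $[J]^2=1$ and $[J][\overline{J}]=\overline{N}([J])=1$, then applies Lemma \ref{canonical}(ii). Your version merely makes explicit the contraction identity $I^2\widetilde{A}\cap A=I^2$ that the paper leaves implicit.
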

\begin{proof} Let $[I] \in \Cl(A)_2$. Then $\iota([I])$ has order $2$ in $\Cl(\widetilde{A})$ by Lemma \ref{canonical}. Now
$$ \overline{N}(\iota([I]))=\iota([I])\overline{\iota([I])}=(\iota([I]^2)=1,$$
by Lemma \ref{canonical} (ii). Hence $\iota([I]) \in \mathrm{ker}\; \overline{N}$.
Conversely let $[J] \in Cl(\widetilde{A})$ have order $2$ and lie in $\mathrm{ker} \; \overline{N}$. 
Then $[J]^2=1$ and $[J][\overline{J}] = \overline{N}([J])=1$. Hence $[J]=[\overline{J}]$ and so $[J] \in\iota( \Cl(A)_2)$ again by Lemma \ref{canonical} (ii).
\end{proof}
\noindent Any element of $N_{\widehat{G}}(G)$ can be represented by a matrix
$$ M= \left[\begin{array}{cc}a&b\\c&d
\end{array}\right] \in \widehat{G}.$$
\noindent By multiplying $M$ by a suitable scalar matrix we may assume that $a,b,c,d \in A$. As before let 
$$\qq(M):=(a)+(b)+(c)+(d).$$
Then
\begin{itemize}
\item[(i)] $\qq(M)^2=(\varDelta).$
\item[(ii)] $(a)+(b)=(a)+(c)=(d)+(b)=(d)+(c)=\qq(M).$
\end{itemize}
\noindent See Theorem \ref{Misquasi} and \cite[Remarks $2$]{Cremona}. In this way $\qq$ induces an isomorphism from $Quinn(G)$ onto $\Cl(A)_2$ and so $\iota\circ \qq$ provides an embedding of $Quinn(G)$ into $\Cl(\widetilde{A})$.
\noindent As before each $\omega \in E(G)$ can be represented as $\omega=\frac{\varepsilon+s}{t}$ where $s,t\in A$ and $t$ divides $(\varepsilon^q +s)(\varepsilon+s)$ in $A$. The element $M$ acts as a M\"obius transformation on $\omega$ by multiplying the column vector ${\varepsilon+s\choose t}$ on the left by the matrix $M$. It follows that $J_{M(\omega)}$ is the $\widetilde{A}$-ideal generated by $a(\varepsilon+s)+bt$ and $c(\varepsilon+s)+dt$. Our next result, the most important in this section, shows that the action of $Quinn(G)$ on $\Ell(G)$ is equivalent to group multiplication in $\mathrm{ker}\;\overline{N}$.
 
\begin{Thm} \label{multiplicationelli} With the above notation,

$$[J_{M(\omega)}]=[\iota(\qq(M))J_{\omega}]=[\iota(\qq(M))][J_{\omega}]\;\;\; (\mathrm{in} \; \ker\; \overline{N}).$$
\end{Thm}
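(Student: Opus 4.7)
The plan is to show that $J_{M(\omega)}$ and $\iota(\qq(M))J_\omega$ coincide as $\widetilde{A}$-ideals, from which the equality of classes in $\ker\overline{N}$ is immediate. By the discussion preceding the theorem, setting $\nu_1 := a(\varepsilon+s)+bt$ and $\nu_2 := c(\varepsilon+s)+dt$ gives $J_{M(\omega)} = \widetilde{A}\nu_1 + \widetilde{A}\nu_2$, while
$$\iota(\qq(M))J_\omega \;=\; \qq(M)\widetilde{A}\cdot\bigl(A(\varepsilon+s)+At\bigr)$$
is the $\widetilde{A}$-ideal generated by the eight products $xy$ with $x\in\{a,b,c,d\}$ and $y\in\{\varepsilon+s,t\}$.

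The crux will be the auxiliary identity
$$\iota(\qq(M))\cdot J_{M(\omega)} \;=\; (\Delta)\widetilde{A}\cdot J_\omega.$$
For the inclusion $\supseteq$, I would exploit the Cramer-style relations
$$d\nu_1 - b\nu_2 \;=\; (ad-bc)(\varepsilon+s) \;=\; \Delta(\varepsilon+s), \qquad a\nu_2 - c\nu_1 \;=\; \Delta t,$$
together with $a,b,c,d\in\qq(M)$ and $\nu_1,\nu_2\in J_{M(\omega)}$. For the inclusion $\subseteq$, I would expand each product $x\nu_i$ (for $x\in\{a,b,c,d\}$ and $i\in\{1,2\}$) as an $A$-linear combination of $\varepsilon+s$ and $t$; the resulting coefficients are of the form $xa, xb, xc, xd$, and by Theorem \ref{Misquasi} they all lie in $\qq(M)^2 = (\Delta)$, so $x\nu_i \in \Delta J_\omega$.

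To finish, I would apply $(\Delta)\widetilde{A} = \qq(M)^2\widetilde{A} = \iota(\qq(M))^2$ to rewrite the auxiliary identity as $\iota(\qq(M))\,J_{M(\omega)} = \iota(\qq(M))^2\,J_\omega$, and then cancel the invertible $\widetilde{A}$-ideal $\iota(\qq(M))$ on both sides (using that $\widetilde{A}$ is a Dedekind domain, so every nonzero fractional ideal is invertible). This gives $J_{M(\omega)} = \iota(\qq(M))J_\omega$ as $\widetilde{A}$-ideals, and passing to classes in $\Cl(\widetilde{A})$ yields the theorem.

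The one essential input is the identity $\qq(M)^2 = (\Delta)$ from Theorem \ref{Misquasi}, which forces all pairwise products of the entries of $M$ to be divisible by $\Delta$ and makes the double inclusion close up; otherwise there is no real obstacle, only routine ideal arithmetic in the Dedekind domain $\widetilde{A}$.
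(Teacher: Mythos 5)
Your argument is correct, but it reaches the conclusion by a genuinely different route from the one in the paper. The paper verifies only the single inclusion $J_{M(\omega)}\subseteq \iota(\qq(M))J_{\omega}$, writes $J_{M(\omega)}=\iota(\qq(M))J_{\omega}I_1$ with $I_1$ an integral ideal of the Dedekind domain $\widetilde{A}$, and then kills $I_1$ by iterating: applying the same factorization to $M(\omega)$ gives $J_{M^2(\omega)}=\iota(\qq(M))^2J_{\omega}I_1I_2=(\varDelta)J_{\omega}I_1I_2$, while Corollary \ref{M^2}(i) (i.e.\ $\varDelta^{-1}M^2\in SL_2(A)$) forces $J_{M^2(\omega)}=(\varDelta)J_{\omega}$ exactly, whence $I_1=I_2=\widetilde{A}$. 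You instead prove the auxiliary identity $\iota(\qq(M))J_{M(\omega)}=(\varDelta)\widetilde{A}J_{\omega}$ by a direct double inclusion --- the Cramer relations $d\nu_1-b\nu_2=\varDelta(\varepsilon+s)$ and $a\nu_2-c\nu_1=\varDelta t$ in one direction, and the divisibility of all pairwise products of the entries by $\varDelta$ (Theorem \ref{Misquasi}) in the other --- and then cancel the invertible ideal $\iota(\qq(M))$. Both proofs rest on the same essential input $\qq(M)^2=(\varDelta)$, and both in fact yield the ideal equality $J_{M(\omega)}=\iota(\qq(M))J_{\omega}$ rather than merely an equality of classes. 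The paper's iteration trick is slightly shorter and recycles Corollary \ref{M^2} wholesale; your version is more explicit, makes the role of the adjugate of $M$ visible, and does not need to invoke the well-definedness of $J$ a second time at the point $M(\omega)$ (nor the fact that $J$ is unchanged under an element of $SL_2(A)$). Either way, the passage to classes in $\ker\overline{N}$ at the end is immediate.
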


\begin{proof} From the above it is clear that $J_{M(\omega)} \leq \qq(M)J_{\omega}$. Since $\widetilde{A}$ is a Dedekind domain, there is an integral ideal $I_1$ of $\widetilde{A}$ such that
$$J_{M(\omega)}=\qq(M)J_{\omega}I_1.$$
\noindent By the same argument there exists an integral ideal $I_2$ of $\widetilde{A}$ with 
$$J_{M^2 (\omega)}=\qq(M)J_{M(\omega)}I_2 =\qq(M)^2 J_{\omega}I_1 I_2 = \varDelta J_{\omega}I_1 I_2.$$
On the other hand, from part (i) of Corollary \ref{M^2} we see that $J_{M^2 (\omega)}=\varDelta J_{\omega}$.
Hence $I_1 = I_2 =\widetilde{A}$ and the result follows.
\end{proof}
\noindent An immediate consequence is the following.
\begin{Cor} \label{freeelli}
$Quinn(G)$ acts freely on $Ell(G)$. More precisely, a quasi-inner automorphism that fixes
an elliptic point in $G\setminus\Omega$ must necessarily be inner.
\end{Cor}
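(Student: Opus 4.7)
The plan is to deduce this corollary directly from Theorem \ref{multiplicationelli} together with the injectivity of the canonical map $\iota$ and the isomorphism $\qq : Quinn(G) \to \Cl(A)_2$.

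First I would unwind what it means for a quasi-inner automorphism $\iota_M$ to fix an elliptic point $G\omega \in \Ell(G)$. By the definition of the action of $Quinn(G)$ on $\Ell(G)$ described earlier, this says $G\cdot M(\omega) = G\omega$. By property (iii) of the correspondence $\omega \mapsto [J_\omega]$ recalled after Definition 4.2, this is equivalent to $[J_{M(\omega)}] = [J_\omega]$ in $\Cl(\widetilde{A})$.

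Next I would invoke Theorem \ref{multiplicationelli}, which gives
\[
[J_{M(\omega)}] = [\iota(\qq(M))]\cdot[J_\omega] \quad (\text{in } \ker\overline{N}).
\]
Combining this with $[J_{M(\omega)}] = [J_\omega]$ and cancelling in the group $\Cl(\widetilde{A})$ yields $[\iota(\qq(M))] = 1$. By the injectivity of $\iota$ (Lemma \ref{canonical}(i)) this forces $[\qq(M)] = 1$ in $\Cl(A)$. Since $\qq$ induces the isomorphism $Quinn(G) \cong \Cl(A)_2$ established after Lemma \ref{2torsion}, the vanishing of $[\qq(M)]$ means exactly that $M \in Z(K).G$, i.e.\ $\iota_M$ is inner. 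This simultaneously proves freeness of the action on $\Ell(G)$ and the stronger assertion in the statement.

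I do not anticipate a real obstacle: everything needed has already been set up. The only subtle point worth double-checking is that the equivalence in the correspondence $\omega\mapsto[J_\omega]$ really is an equivalence (not just one implication), so that $G\cdot M(\omega)=G\omega$ can be translated into the ideal-class equality $[J_{M(\omega)}]=[J_\omega]$; this is guaranteed by item (iii) in the list after Definition 4.2. After that, the argument is a two-line computation in $\ker\overline{N}$.
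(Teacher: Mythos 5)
Your argument is correct and is precisely the reasoning the paper intends: Corollary \ref{freeelli} is stated there as an immediate consequence of Theorem \ref{multiplicationelli}, and your unwinding via $[J_{M(\omega)}]=[\iota(\qq(M))][J_\omega]$, cancellation, injectivity of $\iota$, and the isomorphism $\qq:Quinn(G)\to\Cl(A)_2$ supplies exactly the omitted details. No gaps.
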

\noindent Since $$G\omega=G\overline{\omega} \Longleftrightarrow [J_{\overline{\omega}}]=[J_{\omega}]=
[J_{\omega}]^{-1}$$
\noindent we can identify $\Ell(G)^=$  with $\iota(\Cl(A)_2)\cong Quinn(G))$. Combining Lemma \ref{2torsion} and Corollary \ref{freeelli} we obtain the following result.
\begin{Thm}\label{transonelli} $Quinn(G)$ acts freely and transitively on $\Ell(G)^=$.
\end{Thm}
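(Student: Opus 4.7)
The plan is to construct a $Quinn(G)$-equivariant bijection between $\Ell(G)^=$ and the abelian group $Quinn(G)$ acting on itself by left multiplication; once this identification is established, the conclusion is automatic, since the regular action of a group on itself is free and transitive.

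The first step is to identify $\Ell(G)^=$ with the $2$-torsion subgroup $(\ker\overline{N})_2$. By Theorem \ref{norm}, the map $\omega\mapsto[J_\omega]$ yields a bijection $\Ell(G)\leftrightarrow\ker\overline{N}$. Using $\overline{J_\omega}=J_{\overline{\omega}}$ together with the principality of $J_\omega J_{\overline{\omega}}$ (both recalled just before Theorem \ref{norm}), we obtain $[J_{\overline{\omega}}]=[J_\omega]^{-1}$. Therefore $G\omega=G\overline{\omega}$ is equivalent to $[J_\omega]=[J_\omega]^{-1}$, i.e.\ to $[J_\omega]\in(\ker\overline{N})_2$, so the correspondence restricts to $\Ell(G)^=\leftrightarrow(\ker\overline{N})_2$.

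The second step is to identify $Quinn(G)$ with the same target group. Theorem \ref{isoto2tors} gives an isomorphism $\qq\colon Quinn(G)\to\Cl(A)_2$; Lemma \ref{canonical}(i) shows that $\iota\colon\Cl(A)\hookrightarrow\Cl(\widetilde{A})$ is injective; and Lemma \ref{2torsion} gives $\iota(\Cl(A)_2)=(\ker\overline{N})_2$. Composing, $\iota\circ\qq$ is an isomorphism $Quinn(G)\xrightarrow{\ \sim\ }(\ker\overline{N})_2$. Finally, Theorem \ref{multiplicationelli} states that the action of $[M]\in Quinn(G)$ on $[J_\omega]$ is multiplication by $\iota(\qq(M))$. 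Under the two identifications above this is exactly the regular action of $(\ker\overline{N})_2$ on itself by left multiplication.

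I do not anticipate a serious obstacle: everything reduces to the previously established Theorem \ref{norm}, Lemma \ref{2torsion}, and Theorem \ref{multiplicationelli}. Freeness has already been recorded in Corollary \ref{freeelli}, so only transitivity is new, and it follows directly from the surjectivity of $\iota\circ\qq$ onto $(\ker\overline{N})_2$. The one subtle point worth checking is that $\iota\circ\qq$ really hits all of $(\ker\overline{N})_2$ and not merely a proper subgroup; this is precisely what Lemma \ref{2torsion} guarantees, so the proof is essentially bookkeeping once those identifications are assembled.
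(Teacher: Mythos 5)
Your proposal is correct and follows essentially the same route as the paper: the paper also identifies $\Ell(G)^=$ with $\iota(\Cl(A)_2)=(\ker\overline{N})_2$ via $G\omega=G\overline{\omega}\Leftrightarrow[J_\omega]=[J_\omega]^{-1}$, invokes Lemma \ref{2torsion} for the isomorphism with $Quinn(G)$, and uses Theorem \ref{multiplicationelli} together with Corollary \ref{freeelli} to conclude that the action is the regular one. No gaps.
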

\noindent Theorem \ref{transandfree} (ii), which holds for all $\delta$,
provides an alternative proof of Theorem \ref{transonelli}. Applying the former for the case of odd $\delta$ the latter then follows from the existence of a $Quinn(G)$-invariant one-one correspondence between $\Ell(G)^=$ and 
$\left\{ \widetilde{v} \in \vert(G \backslash \TTT): G_v \cong GL_2(\FF_q) \right\}$. \\ \\
\noindent From the above it is clear that $|\Ell(G)|=n_E|\Ell(G)^=|$, where $$n_E=|\ker\; \overline{N}:\iota(\Cl(A)_2)|.$$ It follows that 
$|\Ell(G)^{\neq}|=(n_E-1)|\Ell(G)^=|$. \\ 

\noindent We recall that the \it building map \rm \cite[p.41]{Gekeler} restricts to a map
$$\lambda: E(G) \rightarrow \vert(\TTT),$$
for which $G^{\omega} \leq G_{\lambda(\omega)}$.
\noindent Let $\kappa$ be a quasi-inner automorphism. Then  by \cite[(iii), p.44]{Gekeler}
$$\lambda(\kappa(\omega))=\kappa(\lambda(\omega)).$$
\noindent Then $\lambda$ induces a map
$$\Ell(G) \mapsto \vert{\TTT}.$$
\noindent By Lemma \ref{Ginv} (ii), Theorem \ref{transandfree} and \cite[Proposition 3.4]{MSelliptic}  this leads to two $Quinn(G)$-invariant one-one correspondences.
$$\Ell(G)^= \longleftrightarrow \left\{\tilde{v} \in \vert(G \backslash \TTT): G_v \cong GL_2(\FF_q) \right\},$$
$$\mathcal{G}= \left\{ \{G\omega,G\overline{\omega}\}:G\omega\neq G\overline{\omega} \right\} \longleftrightarrow \mathcal{V}=\left\{\tilde{v} \in \vert(G \backslash \TTT): G_v \cong \FF_{q^2}^* \right\}. $$ \\ 
\noindent  Note that $|\mathcal{G}|=\frac{1}{2}|\Ell(G)^{\neq}|$.
\begin{Lem}\label{order4} Let $G\omega \in \Ell(G)^{\neq}$ and $\kappa$ be a quasi-inner automorphism represented by $M\in N_{\widehat{G}}(G)$. Then 
$\kappa(G\omega)=G\overline{\omega}$ if and only if $[J_{\omega}]$ has order $4$ in $\mathrm{ker}\;\overline{N}$ and $\iota(\qq(M))]=[J_{\omega}]^2$.
\end{Lem}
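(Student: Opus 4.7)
The plan is to translate the hypothesis $\kappa(G\omega)=G\overline{\omega}$ into an equation in $\ker\overline{N}$ via the bijection of Theorem \ref{norm}, and then to read off the order of $[J_\omega]$ from the fact that $\iota(\qq(M))$ is a $2$-torsion element.

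First, by Theorem \ref{norm}, the condition $GM(\omega)=G\overline{\omega}$ is equivalent to $[J_{M(\omega)}]=[J_{\overline{\omega}}]$. Since $J_\omega J_{\overline{\omega}}$ is a principal $A$-ideal (and hence principal as an $\widetilde{A}$-ideal), we have $[J_{\overline{\omega}}]=[J_\omega]^{-1}$. Applying Theorem \ref{multiplicationelli}, $[J_{M(\omega)}]=[\iota(\qq(M))][J_\omega]$, so the condition becomes
\[
[\iota(\qq(M))]\;=\;[J_\omega]^{-2}.
\]

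Next I would invoke Lemma \ref{2torsion}: the element $\iota(\qq(M))$ lies in $\iota(\Cl(A)_2)=(\ker\overline{N})_2$, so it has order dividing $2$. Consequently $[J_\omega]^{-4}=1$, i.e.\ the order of $[J_\omega]$ divides $4$. Since $G\omega\in\Ell(G)^{\neq}$ means $[J_\omega]\neq [J_\omega]^{-1}$, the order of $[J_\omega]$ is not $1$ or $2$, so it equals exactly $4$. In that case $[J_\omega]^{-2}=[J_\omega]^{2}$, yielding $\iota(\qq(M))=[J_\omega]^2$.

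Conversely, if $[J_\omega]$ has order $4$ and $\iota(\qq(M))=[J_\omega]^2$, then by Theorem \ref{multiplicationelli},
\[
[J_{M(\omega)}]\;=\;[J_\omega]^2\cdot[J_\omega]\;=\;[J_\omega]^3\;=\;[J_\omega]^{-1}\;=\;[J_{\overline{\omega}}],
\]
so $\kappa(G\omega)=GM(\omega)=G\overline{\omega}$ by Theorem \ref{norm}. No step presents a real obstacle; the only point that needs care is deducing the exact order $4$ from the combination of the $2$-torsion constraint on $\iota(\qq(M))$ and the assumption $G\omega\neq G\overline{\omega}$.
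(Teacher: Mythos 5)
Your proposal is correct and follows essentially the same route as the paper: translate $\kappa(G\omega)=G\overline{\omega}$ via Theorem \ref{norm} and Theorem \ref{multiplicationelli} into $[\iota(\qq(M))]=[J_{\omega}]^{-2}$, then use the $2$-torsion of $\iota(\qq(M))$ together with $[J_{\omega}]^2\neq 1$ (from $G\omega\in\Ell(G)^{\neq}$) to pin the order down to $4$. Your write-up actually makes explicit two steps the paper leaves implicit (the $2$-torsion argument forcing $n=4$, and the converse computation), so there is nothing to correct.
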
 
\begin{proof} Let $n>2$ be the order of $[J_{\omega}]$ in $\mathrm{ker}\; \overline{N}$. If $\kappa(G\omega)=G\overline{\omega}$ then by Theorem \ref{multiplicationelli}
$$[\iota(\qq(M)][J_{\omega}]=[J_{\omega}]^{n-1}.$$
Hence $[\iota(\qq(M)]=[J_{\omega}]^{n-2}=[J_{\omega}]^{-2}$ and so $n=4$. The converse is straightforward.
\end{proof}
\noindent The following is an immediate consequence.
\begin{Lem}\label{orbit} Let $\widetilde{v} \in \mathcal{V}$ and let $\{G\omega,G{\overline{\omega}}\}$ be the corresponding elliptic element of $\widetilde{v}$. Then the length of the orbit of $\widetilde{v}$ under the action of $Quinn(G)$ is $\frac{1}{2}|Quinn(G)|$ if $[J_{\omega}]$ has order $4$ in $\mathrm{ker}\;\overline{N}$ and $|Quinn(G)|$ otherwise.
\end{Lem}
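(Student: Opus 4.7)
The plan is to transfer the question from $\mathcal{V}$ to $\mathcal{G}$ via the $Quinn(G)$-invariant bijection $\widetilde{v}\leftrightarrow\{G\omega,G\overline{\omega}\}$ established just before the statement, and then to compute the stabilizer of the pair $\{G\omega,G\overline{\omega}\}$ in $Quinn(G)$. Since the orbit length equals $|Quinn(G)|$ divided by the order of this stabilizer, it suffices to show that the stabilizer has order $2$ when $[J_\omega]$ has order $4$ in $\ker\overline{N}$, and order $1$ otherwise.

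First I would observe that by Corollary \ref{freeelli}, $Quinn(G)$ acts freely on $\Ell(G)$, so no non-trivial $\kappa\in Quinn(G)$ can fix either $G\omega$ or $G\overline{\omega}$ individually. Hence the stabilizer of the unordered pair is either trivial or of order $2$, the latter occurring precisely when some non-trivial $\kappa$ swaps $G\omega$ and $G\overline{\omega}$. (Note that since $Quinn(G)\cong\Cl(A)_2$ is $2$-torsion, if such $\kappa$ sends $G\omega$ to $G\overline{\omega}$, then $\kappa^2=1$ forces it to send $G\overline{\omega}$ back to $G\omega$.)

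Next I would invoke Lemma \ref{order4}: a non-trivial $\kappa=\iota_M$ satisfies $\kappa(G\omega)=G\overline{\omega}$ if and only if $[J_\omega]$ has order $4$ in $\ker\overline{N}$ and $\iota(\qq(M))=[J_\omega]^2$. So if $[J_\omega]$ does not have order $4$, no swap is possible and the orbit length is $|Quinn(G)|$. Conversely, suppose $[J_\omega]$ has order $4$. Then $[J_\omega]^2$ is an element of order $2$ in $\ker\overline{N}$, hence lies in $(\ker\overline{N})_2=\iota(\Cl(A)_2)$ by Lemma \ref{2torsion}. Under the isomorphism $\iota\circ\qq:Quinn(G)\xrightarrow{\cong}\iota(\Cl(A)_2)$ from Theorem \ref{isoto2tors} and Lemma \ref{canonical}, this element corresponds to a (necessarily non-trivial) quasi-inner automorphism $\kappa$ with $\iota(\qq(M))=[J_\omega]^2$, which by Lemma \ref{order4} swaps $G\omega$ and $G\overline{\omega}$. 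Thus the stabilizer has order $2$ and the orbit length is $\tfrac{1}{2}|Quinn(G)|$.

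The main obstacle, and the point where I would be most careful, is ensuring in the order-$4$ case that $[J_\omega]^2$ truly lies in the image of $\iota\circ\qq$, so that an actual matrix $M\in N_{\widehat{G}}(G)$ realising the swap really exists; this is exactly what Lemma \ref{2torsion} supplies, and it is also where the hypothesis that $\delta$ is odd enters (through Lemma \ref{canonical}). Everything else is essentially bookkeeping with the orbit-stabilizer theorem combined with the $Quinn(G)$-invariant correspondences already set up in the section.
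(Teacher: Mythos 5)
Your proof is correct and follows exactly the route the paper intends: the paper states this lemma as an immediate consequence of Lemma \ref{order4}, and your argument simply makes explicit the orbit--stabilizer bookkeeping, the freeness from Corollary \ref{freeelli}, and the existence (via Lemma \ref{2torsion}) of a quasi-inner automorphism realising $[J_\omega]^2$ in the order-$4$ case. Nothing is missing; the care you take over the surjectivity of $\iota\circ\qq$ onto $(\ker\overline{N})_2$ is exactly the point the paper leaves implicit.
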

\begin{Prop} \label{2,3,odd}
Suppose that $|\Ell(G)^=|<|\Ell(G)|$. Then
\begin{itemize} 
	\item[(a)] $Quinn(G)$ acts transitively on $\Ell(G)^{\neq}$ if and only if $n_E=2$.
	\item[(b)] $Quinn(G)$ acts transitively on $\mathcal{V}$ if and only if $n_E\in\{2,3\}$.
           \item[(c)] $Quinn(G)$ acts freely on $\mathcal{V}$ if and only if $n_E$ is odd.
           \item[(d)] $Quinn(G)$ acts freely and transitively on $\mathcal{V}$ if and only if $n_E=3$.
\end{itemize} 
\end{Prop}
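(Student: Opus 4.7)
The plan is to transport everything to the abelian group $\ker\overline{N}$ via the bijection $\Ell(G)\leftrightarrow\ker\overline{N}$, $G\omega\mapsto[J_\omega]$, of Theorem \ref{norm}. Under Theorem \ref{multiplicationelli} the $Quinn(G)$-action becomes translation by the subgroup $S:=\iota(\Cl(A)_2)$, which Lemma \ref{2torsion} identifies with the full $2$-torsion $(\ker\overline{N})_2$; the conjugation $\omega\mapsto\overline\omega$ corresponds to inversion in $\ker\overline{N}$. Set $Q:=\ker\overline{N}/S$, a group of order $n_E$. Then the $Quinn(G)$-orbits on $\Ell(G)$ are exactly the cosets of $S$: the trivial coset is $\Ell(G)^=$ by Theorem \ref{transonelli}, and the remaining $n_E-1$ cosets partition $\Ell(G)^{\neq}$. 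This immediately yields (a): transitivity on $\Ell(G)^{\neq}$ is equivalent to $n_E-1=1$.

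For (b)--(d) I would analyse the induced action on $\mathcal{V}\leftrightarrow\mathcal{G}$, whose elements are unordered pairs $\{y,y^{-1}\}$ with $y\notin S$. The stabilizer of such a pair in $Quinn(G)$ is non-trivial precisely when some $s\in S$ satisfies $sy=y^{-1}$, i.e.\ $y^{-2}\in S$. Since $S=(\ker\overline{N})_2$ and $y\notin S$, this is equivalent to $y$ having order exactly $4$ in $\ker\overline{N}$, exactly matching Lemma \ref{order4} and Lemma \ref{orbit}. Passing to $Q$, each non-trivial coset $[y]$ either satisfies $[y]\neq[y]^{-1}$, in which case the pair of distinct cosets $\{yS,y^{-1}S\}$ merges into a single $\mathcal{V}$-orbit of size $|Quinn(G)|$; or $[y]$ is an element of order $2$ in $Q$ (equivalently $y$ has order $4$ in $\ker\overline{N}$), in which case $yS=y^{-1}S$ is a single coset producing one $\mathcal{V}$-orbit of size $|Quinn(G)|/2$. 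Let $a$ be the number of non-trivial cosets of the first type and $b$ the number of elements of order $2$ in $Q$; then $a$ is even, $a+b=n_E-1$, and $\mathcal{V}$ has exactly $a/2+b$ orbits under $Quinn(G)$.

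The remaining items reduce to elementary arithmetic on $(a,b)$. Transitivity in (b) requires $a/2+b=1$, which forces $(a,b)=(2,0)$ (giving $n_E=3$) or $(a,b)=(0,1)$ (giving $n_E=2$); for each converse a direct look at the corresponding $Q$ of order $2$ or $3$ confirms transitivity. Freeness in (c) is $b=0$, i.e.\ $Q$ has no element of order $2$, which by Cauchy's theorem is equivalent to $|Q|=n_E$ being odd. Finally (d) is the intersection of (b) and (c), forcing $n_E=3$. The principal technical point is the clean characterization ``$y^2\in S\Leftrightarrow y$ has order $4$'' for $y\notin S$, which hinges on Lemma \ref{2torsion} pinning $S$ down as the entire $(\ker\overline{N})_2$; without this identification the case analysis on $\mathcal{V}$-orbit sizes would not line up with the visible $2$-torsion of $Q$.
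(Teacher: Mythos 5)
Your proof is correct and follows essentially the same route as the paper: identifying the $Quinn(G)$-orbits on $\Ell(G)$ with the cosets of $\iota(\Cl(A)_2)=(\ker\overline{N})_2$ in $\ker\overline{N}$, using the order-$4$ criterion of Lemma \ref{order4} to detect non-free points of $\mathcal{V}$, and counting orbits. Your quotient group $Q$ and the bookkeeping with $(a,b)$ are just a slightly more systematic packaging of the paper's orbit count; the substance is identical.
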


\begin{proof} 
(a) Since $Quinn(G)$ acts freely on $\Ell(G)^{\neq}$  the action is transitive if and only if 
$|Quinn(G)|=|\Ell(G)^=|= |\Ell(G)^{\neq}|$ that is if $n_E=2$. \\
(b) If $Quinn(G)$ acts transitively on $\mathcal{V}$ then $|\mathcal{G}| \leq |\Ell(G)^=|$ and so $n_E \in \{2,3\}$. When $n_E=2$ (a) applies. When $n_E=3$ the two $Quinn(G)$-orbits represented by $G\omega$ and $G\overline{\omega}$ are identified in $\mathcal{G}$. \\
(c) By Lemma \ref{order4} the action of $Quinn(G)$ on $\mathcal{G}$ is \it not \rm free if and only if there exists $[J_{\omega}]$ of order $4$ and such an element exists if and only if $n_E$ is even.\\
(d) follows from (b) and (c).
\end{proof}
\begin{Rk} Suppose that $g(K)=g>0$. The $2$-torsion rank of an abelian variety of dimension $g$ is bounded 
by $2g$. Applying this to $\Cl^0(\widetilde{K})$ or $\Cl(\widetilde{A})$ (and using the fact that $\delta$ is odd) it follows that 
$$|\Ell(G)^=| \leq 2^{2g}.$$
See \cite[Chapter 11]{Rosen}. On the other hand by the Riemann Hypothesis for function fields 
\cite[Theorems 5.1.15(e), 5.2.1]{Stich}
$$|\Ell(G)|=L_{K}(-1) \geq (\sqrt{q}-1)^{2g}.$$
If $n_E=2$ then
$$2^{2g+1} \geq (\sqrt{q}-1)^{2g}.$$
(a)  \it If $q\geq 16$ (and $g>0$), then $Quinn(G)$ cannot act transitively on $\Ell(G)^{\neq}$. \\ 
\rm Another consequence follows using an identical argument. \\
(b) \it If $q\geq 23$ (and $g >0$), then $Quinn(G)$ cannot act transitively on $\mathcal{V}$. 	
\end{Rk} 
\begin{Rk} It is known \cite[Corollary 2.12, Theorem 5.1]{MSstabilizer} that
a vertex $\widetilde{v}$ of $G \backslash \TTT$ is \bf isolated \rm if and only if $\delta=1$ and $G_v \cong GL_2(\FF_q)$ or $\FF_{q^2}^*$. Hence when $\delta=1$ therefore  Theorem \ref{transonelli}, Proposition \ref{2,3,odd} and Remarks 4.13 can be interpreted as statements about the action of $Quinn(G)$ on the isolated vertices of $G \backslash \TTT$.
\end{Rk}
\section{Action on cyclic subgroups}\label{sect:cyclic}
\noindent Our focus of attention in this section are the subgroups of $G$ which are cyclic of order $q^2-1$. As distinct from Section 3 some of the results require $\delta$ to be \it odd. \rm
\begin{Def} A finite subgroup $S$ of $G$ is \bf maximally finite \rm if every subgroup of $G$ which properly contains it is infinite.
\end{Def}

\begin{Lem}\label{contnotmaxfinite} Let $C$ be a cyclic subgroup of $G$ of order $q^2-1$ which is not maximally finite. Then there exists $H \in \mathcal{H}$ which contains $C$. Moreover $H$ is unique if $\delta$ is odd.
\end{Lem}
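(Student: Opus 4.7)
I would split the statement into existence of $H$, which needs no assumption on $\delta$, and uniqueness under the extra hypothesis that $\delta$ is odd.

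For existence, I will exploit the failure of maximal finiteness. By hypothesis there is a finite subgroup $F$ of $G$ with $C\lneqq F$, and Lemma \ref{vertfinite}(ii) places $F$ (hence $C$) inside some vertex stabilizer $G_{v_0}$. Thus $G_{v_0}$ is a finite subgroup of $G$ that strictly contains the cyclic subgroup $C$ of order $q^2-1$. Lemma \ref{cyclicsubgroup} then forces $G_{v_0}\cong GL_2(\FF_q)$ or $G_{v_0}\cong\FF_{q^2}^*$, and the latter is ruled out because its order is only $q^2-1$. So $H:=G_{v_0}$ is the required element of $\mathcal{H}$.

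For uniqueness when $\delta$ is odd, Corollary \ref{vertstab} gives a unique vertex $v=v(H)$ with $G_v=H$ for every $H\in\mathcal{H}$. Assume for contradiction that $H_1=G_{v_1}$ and $H_2=G_{v_2}$ both contain $C$ but $v_1\neq v_2$. Since $\TTT$ is a tree, $C$ fixes every edge of the geodesic from $v_1$ to $v_2$; let $e$ be the first such edge. Then $C\leq G_e\leq G_{v_1}\cong GL_2(\FF_q)$. The remark following Corollary \ref{vertstab} identifies $G_e$ with the stabilizer in $GL_2(\FF_q)$ of some point $p\in\PP^1(\FF_{q^\delta})$ under the M\"obius action (cf.\ Lemma \ref{stanedge}). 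Now either $p\in\PP^1(\FF_q)$, in which case $G_e$ is a Borel subgroup of $GL_2(\FF_q)$ whose order $q(q-1)^2$ is not a multiple of $q^2-1$; or $p$ has degree $d'>1$ over $\FF_q$, in which case $G_e$ is cyclic of order $q^{d'}-1$, and the containment $C\leq G_e$ forces $(q^2-1)\mid(q^{d'}-1)$, and hence $2\mid d'$. Since $d'\mid\delta$ and $\delta$ is odd, the second case is impossible. Both cases contradict $C\leq G_e$, so $v_1=v_2$ and $H_1=H_2$.

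The only genuine obstacle is the uniqueness step: I have to exclude $C$ from every edge stabilizer at a $GL_2(\FF_q)$-vertex. The $\PP^1(\FF_{q^\delta})$-parametrization of such edges is the right tool, because it converts the question into a short calculation about stabilizers of Galois points under the standard M\"obius action, and oddness of $\delta$ exactly eliminates the only configuration (a quadratic point $p$) that would otherwise allow a cyclic group of order $q^2-1$ to sit inside an edge stabilizer.
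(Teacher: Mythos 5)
Your argument is correct, and its skeleton coincides with the paper's: existence is obtained exactly as in the paper (non-maximal finiteness gives a strictly larger finite subgroup, Lemma \ref{vertfinite}(ii) puts it in a vertex stabilizer, and Lemma \ref{cyclicsubgroup} plus an order count forces that stabilizer into $\mathcal{H}$), and uniqueness for odd $\delta$ is in both cases reduced to the fact that $C$ would have to fix every edge of the geodesic joining $v_1$ and $v_2$. Where you genuinely diverge is at the final step: the paper disposes of this configuration by citing \cite[Corollary 2.16]{MSstabilizer} as a black box, whereas you reprove the needed special case from scratch using the $\PP^1(\FF_{q^{\delta}})$-parametrization of the edges at a $GL_2(\FF_q)$-vertex (Lemma \ref{stanedge} and the remark after Corollary \ref{vertstab}). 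This makes the proof self-contained and isolates exactly where oddness of $\delta$ enters (no divisor $d'=2$ of $\delta$, hence no quadratic point of $\PP^1(\FF_{q^{\delta}})$ whose stabilizer is a non-split torus of order $q^2-1$), at the cost of a slightly longer argument. One inaccuracy to fix: for a point $p$ of degree $d'\geq 3$ over $\FF_q$ the stabilizer in $GL_2(\FF_q)$ is not cyclic of order $q^{d'}-1$; a non-scalar M\"obius transformation defined over $\FF_q$ fixing $p$ would fix all $d'\geq 3$ Galois conjugates of $p$, which is impossible, so the stabilizer is just the scalars, of order $q-1$. Your conclusion survives (indeed even more easily, since $q-1<q^2-1$), and your divisibility argument $(q^2-1)\mid(q^{d'}-1)\Leftrightarrow 2\mid d'$ is valid as an upper-bound argument, but the description of $G_e$ should be corrected.
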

\begin{proof} By Lemma \ref{vertfinite} (ii) there exists $G_v$ which properly contains $C$. Hence $G_v \in \mathcal{H}$ by Lemma \ref{cyclicsubgroup}. \\
\noindent Suppose now that $\delta$ is odd. If $H$ is not unique then
$$ C \leq G_{v_1} \cap G_{v_2},$$
where $v_1 \neq v_2$. It follows that $C$ fixes the geodesic in $\TTT$ joining $v_1$ and $v_2$, including all its edges. This contradicts \cite[Corollary 2.16]{MSstabilizer}.  
\end{proof}

\begin{Lem}\label{cyclicconjugate} Let $C,C_0$ be cyclic subgroups of order $q^2-1$ contained in some $H \in \mathcal{H}$. Then $C,C_0$ are conjugate in $H$.
\end{Lem}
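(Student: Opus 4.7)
The plan is to transport the problem to $GL_2(\FF_q)$ via a fixed isomorphism $H \xrightarrow{\sim} GL_2(\FF_q)$ and then apply classical facts about non-split tori. First I would argue that any cyclic subgroup $C \leq GL_2(\FF_q)$ of order $q^2 - 1$ must be a non-split torus, namely $C = F^{*}$ for some subfield $F \subseteq M_2(\FF_q)$ with $F \cong \FF_{q^2}$. Indeed, by the standard classification of conjugacy classes in $GL_2(\FF_q)$, a generator $c$ of $C$ is conjugate either to a diagonal matrix (forcing $\mathrm{ord}(c) \mid q-1$), or to a non-diagonal upper-triangular matrix with equal diagonal entries (forcing $\mathrm{ord}(c) \mid p(q-1)$), or else it has irreducible characteristic polynomial over $\FF_q$. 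Since $p(q-1) < q^2 - 1$ for every prime power $q$ (as $q+1 > p$), only the last case is compatible with $\mathrm{ord}(c) = q^2 - 1$. Hence $F := \FF_q[c]$ is a subfield of $M_2(\FF_q)$ isomorphic to $\FF_{q^2}$, and since $F^{*}$ is cyclic of order $q^2-1$ and contains $\langle c\rangle = C$, we have $C = F^{*}$. The same applies to $C_0 = F_0^{*}$.

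The key step is then to show that any two subfields $F,F_0 \subseteq M_2(\FF_q)$, each isomorphic to $\FF_{q^2}$, are conjugate under $GL_2(\FF_q)$. I plan to invoke the Skolem--Noether theorem: since $M_2(\FF_q)$ is a central simple $\FF_q$-algebra, any $\FF_q$-algebra isomorphism $F \to F_0$ (which exists because both are isomorphic to $\FF_{q^2}$) extends to an inner automorphism of $M_2(\FF_q)$, i.e.\ conjugation by some unit $h \in GL_2(\FF_q)$. Then $hCh^{-1} = hF^{*}h^{-1} = F_0^{*} = C_0$, and transferring $h$ back to $H$ via the chosen isomorphism gives the desired element of $H$ conjugating $C$ onto $C_0$.

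I do not anticipate a serious obstacle. The only point that deserves care is the reduction to the non-split case, which is a purely internal fact about conjugacy classes in $GL_2$ over a finite field. The appeal to Skolem--Noether is convenient but optional: an elementary alternative is to note that any two matrices in $M_2(\FF_q)$ with the same irreducible characteristic polynomial are similar (rational canonical form), so once one picks generators $c,c_0$ of $C,C_0$ with equal characteristic polynomial (adjusting $c_0$ by a Frobenius power inside $C_0$ if necessary), an explicit similarity matrix already conjugates $F$ to $F_0$ and hence $C$ to $C_0$.
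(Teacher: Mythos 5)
Your proof is correct, but it takes a genuinely different (and more self-contained) route than the paper's. Both arguments first reduce to $H=GL_2(\FF_q)$: the paper does this via Lemma \ref{transGL}, conjugating by an element of $N_{\widehat{G}}(G)$, whereas you use an arbitrary abstract isomorphism $H\cong GL_2(\FF_q)$ --- which suffices, since conjugacy of subgroups and element orders are isomorphism-invariant, so your reduction is actually cheaper. Inside $GL_2(\FF_q)$ the paper quotes \cite{MSstabilizer} to write $C=F^{\mu}$, the stabilizer of some $\mu\in\FF_{q^2}\setminus\FF_q$ under the M\"obius action, observes that $\mu_0=\alpha\mu+\beta$ with $\alpha\neq 0$, and conjugates by the explicit matrix $\left[\begin{smallmatrix}\alpha&\beta\\0&1\end{smallmatrix}\right]$; you instead identify $C$ as a non-split torus $F^{*}$ with $F=\FF_q[c]\cong\FF_{q^2}$ by ruling out the split and non-semisimple conjugacy types on order grounds (both $q-1$ and $p(q-1)$ are less than $q^2-1$), and then conjugate the two embedded copies of $\FF_{q^2}$ by Skolem--Noether or by rational canonical form. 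The two descriptions of $C$ coincide (the M\"obius stabilizer of $\mu$ is exactly $\FF_q[M_\mu]^{*}$ for the companion matrix $M_\mu$ of the minimal polynomial of $\mu$), so the difference is mainly one of packaging: the paper's sketch is shorter because it leans on prior results from \cite{MSstabilizer}, while yours uses only standard facts about $GL_2$ of a finite field. One small imprecision in your elementary variant: to produce a generator of $C_0$ with the same characteristic polynomial as $c$ you should take a root of that irreducible polynomial inside $F_0$ (it automatically has order $q^2-1$ and hence generates $C_0$); this element is some power $c_0^k$ with $\gcd(k,q^2-1)=1$ of your chosen generator, not necessarily a Frobenius power. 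This does not affect the validity of the argument.
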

\begin{proof} By Lemma \ref{transGL} we may assume that $H=GL_2(\FF_q)$. This then becomes a well-known result. In the absence of a suitable reference we sketch a proof which lies within the context of this paper. \\
By the proof of \cite[Theorem 2.6]{MSstabilizer} (based on \cite[Lemma 1.4]{MSstabilizer}) it follows that
$$C=F^{\mu}=\left\{ g \in GL_2(\FF_q) : g(\mu)=\mu \right\},$$
for some $\mu \in \FF_{q^2} \backslash \FF_q$. Let $C_0=F^{\mu_0}$.\\
Now $\mu_0=\alpha \mu+\beta$ for some $\alpha,\beta \in \FF_q$, where $\alpha \neq 0$. Then
$C_0= g_0Cg_0^{-1}$, where

$$ g_0= \left[\begin{array}{cc}\alpha&\beta\\0&1\end{array}\right].$$
\end{proof}
\begin{Def} Let
$$\mathcal{C}=\left\{C\leq G:C, \mathrm{cyclic \;of\;order} \; q^2-1 \right\},$$
$$\mathcal{C}_{mf} = \left\{ C \in \mathcal{C}: C, \mathrm{maximally \;finite} \right\}.$$
 $$ \mathcal{C}_{nm}= \mathcal{C} \backslash \mathcal{C}_{mf}.$$
\end{Def}
\noindent Clearly every automorphism of $G$ acts on both $\mathcal{C}_{mf}$ and $\mathcal{C}_{nm}$.

\begin{Prop}\label{transcyclic}	The quasi-inner automorphisms act transitively on all cyclic subgroups of $G$ of order $q^2-1$ that are not maximally finite.
\end{Prop}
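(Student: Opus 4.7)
The plan is to route everything through the standard copy $GL_2(\FF_q) \leq G$ by using the machinery already established in Sections \ref{sect:quasi-inner} and \ref{sect:stabilizers}, and then use Lemma \ref{cyclicconjugate} to rotate one cyclic subgroup onto another inside $GL_2(\FF_q)$.

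First, given any $C \in \mathcal{C}_{nm}$, Lemma \ref{contnotmaxfinite} provides some $H \in \mathcal{H}$ with $C \leq H$. By Lemma \ref{transGL} there is a quasi-inner automorphism $\kappa = \iota_g$ of $G$ with $\kappa(GL_2(\FF_q)) = H$, and hence $\kappa^{-1}(C)$ is a cyclic subgroup of order $q^2-1$ sitting inside $GL_2(\FF_q)$. So every element of $\mathcal{C}_{nm}$ lies in the $Quinn(G)$-orbit of some cyclic subgroup of order $q^2-1$ contained in $GL_2(\FF_q)$.

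Next, given two such subgroups $C_1, C_2 \in \mathcal{C}_{nm}$ with chosen $H_1, H_2 \in \mathcal{H}$ and quasi-inner automorphisms $\kappa_1, \kappa_2$ as above, I would apply Lemma \ref{cyclicconjugate} to the two cyclic subgroups $\kappa_1^{-1}(C_1)$ and $\kappa_2^{-1}(C_2)$ of $GL_2(\FF_q)$: there exists $h \in GL_2(\FF_q) \leq G$ with $h\,\kappa_1^{-1}(C_1)\,h^{-1} = \kappa_2^{-1}(C_2)$. Conjugation by $h$ is an inner automorphism of $G$, hence a (trivial) quasi-inner automorphism.

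Finally, I would note that $Quinn(G)$, although defined as a quotient, arises from a group $N_{\widehat{G}}(G)$ of quasi-inner automorphism representatives that is closed under composition and inversion. Writing $\kappa_i = \iota_{g_i}$, the composition
\[
\kappa_2 \circ \iota_h \circ \kappa_1^{-1} \;=\; \iota_{g_2 h g_1^{-1}}
\]
is again quasi-inner because $g_2 h g_1^{-1} \in N_{\widehat{G}}(G)$, and by construction it sends $C_1$ to $C_2$. This proves transitivity. I do not anticipate a real obstacle: all non-trivial ingredients (embedding into an $\mathcal{H}$-element, reduction to $GL_2(\FF_q)$ via quasi-inner conjugation, internal conjugacy in $GL_2(\FF_q)$) are already in place, and the only point worth spelling out carefully is that the composition stays within the group of quasi-inner automorphisms, which is immediate from the definition.
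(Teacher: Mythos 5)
Your proposal is correct and follows essentially the same route as the paper: reduce to $GL_2(\FF_q)$ via Lemma \ref{contnotmaxfinite} and Lemma \ref{transGL}, then finish with Lemma \ref{cyclicconjugate}. The only difference is that you spell out the composition of quasi-inner automorphisms explicitly, which the paper leaves implicit.
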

\begin{proof} Let $C \in \mathcal{C}_{nm}$. Then by Lemmas \ref{transGL} and \ref{contnotmaxfinite} there
exists $g_0 \in N_{\widehat{G}}(G)$ such that   
$$ C^{g_0} \in GL_2(\FF_q).$$
The rest follows from Lemma \ref{cyclicconjugate}.
\end{proof}

\noindent The next result follows from Proposition \ref{transcyclic} and Theorem \ref{transandfree}. 
\begin{Prop} \label{freetranscyclic} 
	If $\delta$ is odd, $Quinn(G)$ acts freely and transitively on the conjugacy classes (in $G$) of cyclic subgroups of $G$ of order $q^2-1$ that are not maximally finite.
	\end{Prop}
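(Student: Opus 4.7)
The plan is to derive both properties directly from results already established. For transitivity, Proposition \ref{transcyclic} asserts that the group of quasi-inner automorphisms acts transitively on $\mathcal{C}_{nm}$ itself, so the induced action on the set of $G$-conjugacy classes $(\mathcal{C}_{nm})^G$ is transitive a fortiori; this step uses no extra hypothesis and no delicate argument.

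For freeness, the key idea is to transfer the problem from cyclic subgroups to their ambient copies of $GL_2(\FF_q)$, where Theorem \ref{transandfree}(i) already gives freeness. Suppose $\iota_g$ is a quasi-inner automorphism that fixes some conjugacy class $C^G$ with $C \in \mathcal{C}_{nm}$, so that $\iota_g(C) = hCh^{-1}$ for some $h \in G$. By Lemma \ref{contnotmaxfinite} there exists $H \in \mathcal{H}$ with $C \le H$, and, crucially, because $\delta$ is odd this $H$ is \emph{unique}. Applying $\iota_g$ (respectively, conjugation by $h$) yields $\iota_g(C) \le \iota_g(H)$ and $\iota_g(C) = hCh^{-1} \le hHh^{-1}$, with $\iota_g(H)$ and $hHh^{-1}$ both lying in $\mathcal{H}$. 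Since $\iota_g(C)$ is cyclic of order $q^2-1$ and properly contained in $\iota_g(H)$ it is itself a member of $\mathcal{C}_{nm}$, so the uniqueness clause of Lemma \ref{contnotmaxfinite} forces $\iota_g(H) = hHh^{-1}$. Thus $\iota_g$ fixes the class $H^G \in \mathcal{H}^G$, and Theorem \ref{transandfree}(i) then implies that $\iota_g$ is inner, which is exactly the freeness of the action.

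The main obstacle, and the reason the hypothesis ``$\delta$ odd'' appears, is the uniqueness statement in Lemma \ref{contnotmaxfinite}: without it, the equality $\iota_g(C) = hCh^{-1}$ would tell us only that $\iota_g(H)$ and $hHh^{-1}$ both contain the same cyclic subgroup, and the reduction to Theorem \ref{transandfree}(i) would fail. Everything else is essentially bookkeeping using the correspondence between elements of $Quinn(G)$ and inner-automorphism classes of quasi-inner automorphisms from Definition \ref{def:quasi-inner}.
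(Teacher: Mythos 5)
Your proposal is correct and follows essentially the same route as the paper, which simply cites Proposition \ref{transcyclic} for transitivity and Theorem \ref{transandfree} for freeness; your write-up supplies the details the paper leaves implicit, in particular the use of the uniqueness clause of Lemma \ref{contnotmaxfinite} (the step that genuinely requires $\delta$ odd, as the paper's Example \ref{exg=0delta=2} confirms) to pass from a fixed class in $(\mathcal{C}_{nm})^G$ to a fixed class in $\mathcal{H}^G$.
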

\noindent The restrictions on $\delta$ in Lemma \ref{contnotmaxfinite} and Proposition \ref{freetranscyclic} are necessary.
\begin{Exm} \label{exg=0delta=2} Consider the case where $g(K)=0, \;\delta=2$. This case is studied in detail in \cite[Section 3]{MSJLMS}. By the exact sequence in Section 2 it is known that here
$$ \Cl(A)=\Cl(A)_2 \cong Quinn(G) \cong \ZZ/2\ZZ.$$
\noindent There exists a vertex $v_0$ adjacent to the standard vertex $v_s$ and $g_0 \in N_{\widehat{G}}(G) \backslash G$ such that 
$$G_{v_0}=GL_2(\FF_q)^{g_0}\; \mathrm{and}\; G_{v_s} \cap G_{v_0} \in \mathcal{C}_{nm}.$$
\noindent Hence the restriction on $\delta$ in part of Lemma \ref{contnotmaxfinite} is necessary. \\ \\
\noindent It is known \cite[Theorem 3.3]{MSJLMS} that in this case
$$G= GL_2(\FF_q) \star_{\quad{C}} GL_2(\FF_q)^{g_0},$$
where $C(=GL_2(\FF_q) \cap GL_2(\FF_q)^{g_0}) \in \mathcal{C}_{nm}$. It follows by Lemma \ref{cyclicconjugate} that
there exists $g \in GL_2(\FF_q)$ for which
 $$ C^g=C^{g_0}.$$
 \noindent In this case therefore $Quinn(G)$, which is non-trivial, fixes $C^G$. The restriction on $\delta$ in 
Proposition \ref{freetranscyclic} is therefore necessary.
\end{Exm}
\noindent We conclude this section with some remarks about $\mathcal{C}_{mf}$. 
\begin{Lem}\label{maxfiniteodd} Suppose that $\delta$ is odd. Then
$$C\in \mathcal{C}_{mf} \Longleftrightarrow C=G_v \cong \FF_{q^2}^*.$$
\end{Lem}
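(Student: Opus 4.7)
The statement splits into two implications, both of which are essentially one-step consequences of the vertex stabilizer classification in Lemma \ref{cyclicsubgroup} together with the fact that no cyclic subgroup of order $q^2-1$ can fix an edge of $\TTT$ when $\delta$ is odd. My plan is to handle the two directions separately.

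For the forward direction, assume $C \in \mathcal{C}_{mf}$. I would apply Lemma \ref{vertfinite} (ii) to place $C$ inside some vertex stabilizer $G_v$. Since $G_v$ then contains a cyclic subgroup of order $q^2-1$, Lemma \ref{cyclicsubgroup} forces $G_v \cong GL_2(\FF_q)$ or $G_v \cong \FF_{q^2}^*$. Because $G_v$ is finite and $C$ is maximally finite in $G$, the inclusion $C\leq G_v$ forces $C=G_v$. A simple order comparison ($|GL_2(\FF_q)|=(q^2-1)(q^2-q)>q^2-1$) then rules out the $GL_2(\FF_q)$ possibility, leaving $G_v \cong \FF_{q^2}^*$.

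For the reverse direction, suppose $C=G_v \cong \FF_{q^2}^*$. To verify maximal finiteness, I would take an arbitrary finite subgroup $H$ of $G$ with $C \leq H$ and apply Lemma \ref{vertfinite} (ii) again to obtain $H \leq G_{v_0}$ for some $v_0 \in \vert(\TTT)$. Then $C$ lies in both $G_v$ and $G_{v_0}$; if $v_0 \neq v$, the cyclic group $C$ of order $q^2-1$ would fix every edge along the geodesic joining $v$ and $v_0$. But when $\delta$ is odd, \cite[Corollary 2.16]{MSstabilizer} (the same edge-stabilizer result already invoked in the proof of Lemma \ref{contnotmaxfinite}) forbids a cyclic subgroup of order $q^2-1$ from stabilizing any edge. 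Hence $v_0=v$, so $H \leq G_v = C$, which gives $H=C$ and confirms that $C$ is maximally finite.

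The main obstacle, if any, is purely bookkeeping: one must be careful that the order-$(q^2-1)$ hypothesis is exploited in both directions (to apply Lemma \ref{cyclicsubgroup} on one side and to invoke the edge-stabilizer obstruction on the other), and that the hypothesis that $\delta$ is odd is used \emph{only} in the reverse direction through \cite[Corollary 2.16]{MSstabilizer}. Once these ingredients are lined up, the argument is essentially a two-line deduction from the machinery already developed in Sections \ref{sect:quasi-inner} and \ref{sect:stabilizers}.
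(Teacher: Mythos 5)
Your proof is correct and follows essentially the same route as the paper's: Lemma \ref{vertfinite}(ii) places any finite overgroup of $C$ in a vertex stabilizer, maximal finiteness forces equality, and the reverse direction uses the geodesic-fixing argument against \cite[Corollary 2.16]{MSstabilizer}, exactly as in the paper's proof of Lemma \ref{contnotmaxfinite}. Your observation that $\delta$ odd enters only through the edge-stabilizer obstruction in the reverse direction is also consistent with the paper, which supplies Example \ref{exg=0delta=2} as the counterexample for even $\delta$.
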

\begin{proof} Suppose that $C=G_v \cong \FF_{q^2}^*$ and that $C \in \mathcal{C}_{nm}$. Then by Lemmas \ref{vertfinite} and \ref{vertGL} it follows that $C \leq G_v \cap G_{v_0}$ for some $v_0 \neq v$, which contradicts 
\cite[Corollary 2.16]{MSstabilizer}. The rest follows from Lemma \ref{vertfinite}.
\end{proof}
\noindent When $\delta$ is odd there is therefore a one-one correspondence
$$(\mathcal{C}_{mf})^G \longleftrightarrow \mathcal{V}.$$
\noindent For the case where $\delta$ is odd this shows that the results in Proposition \ref{2,3,odd} apply to the action of $Quinn(G)$ on $(\mathcal{C}_{mf})^G$.

\begin{Rk} \label{maxcycliceven} 
As a M\"obius transformation every member of $G$ fixes an element of $C_{\infty}$. Suppose now that $\delta$ is even and that $C$ is a cyclic subgroup of order $q^2 -1$ (maximally finite or not). Then from the proof of \cite[Proposition 2.3]{MSelliptic} it follows that $C$ fixes $\mu \in K.\FF_{q^2}\backslash K$. In this case however $\mu \in K_{\infty}$ as $\delta$ is even. So $\mu$, which is not in $\Omega$ and not in $K$, can neither be an inner point nor a cusp of the Drinfeld modular curve $G \backslash \Omega$. We refer to $\mu$ as \bf pseudo-elliptic. \rm \\
\noindent On the other hand suppose that $\delta$ is odd. Let $g$ be any element of infinite order in $G$ and let $g$ fix $\lambda$. Then $\lambda \in K_{\infty} \backslash K$. \\
 
\end{Rk}

\section{Action on cusps}\label{sect:cusps}
\noindent As distinct from Section $4$ the results here hold \it for all $\delta$. \rm Any element of $\widehat{G}$ acts on $\PP^1(K)=K \cup \{\infty\}$ as a M\"obius transformation. In this way $Quinn(G)$ acts on $G \backslash \PP^1(K)=\Cusp(G)$. Every element of $\Cusp(G)$ can be represented in the form $(a : b)$, where $a,b \in A$. Since $A$ is a Dedekind ring this gives rise to a one-one correspondence
$$\Cusp(G) \longleftrightarrow \Cl(A).$$ Hence the action of $Quinn(G)$ on $\Cusp(G)$ translates to an action of $\Cl(A)_2$ on $\Cl(A)$. The principal result in this section is similar to but simpler than Theorem \ref{multiplicationelli}. It translates this action into multiplication in the group $\Cl(A)$. We sketch a proof. \\
\noindent We can represent any cusp, $c$, by an element $(x:y) \in \PP^1(K)$, where $x,y \in A$. Let
$$J_c= xA+yA,$$
\noindent and let $[J_c]$ be its image in $\Cl(A)$. \\
\noindent Now let $\kappa$ be a non-trivial element of $Quinn(G)$. Then as before by Theorem \ref{Misquasi}  $\kappa$ can be represented by a matrix 
$$ M= \left[\begin{array}{cc}a&b\\c&d
\end{array}\right] \in \widehat{G},$$
\noindent where we may assume that $a,b,c,d \in A$.  Let $\qq(M)$ be the $A$-ideal generated by
$a,b,c,d$.\\
\noindent The action of $\kappa$ on $c$ is given by the action of $M$ multiplying the column vector ${x \choose y}$ on the left by $M$. In this way
$$ J_{\kappa(c)}=J_{M(c)}=(ax+by)A+(cx+dy)A.$$

\begin{Thm} \label{multiplicationcusp} Under the identification of $\Cusp(G)$ with $\Cl(A)$ and $Quinn(G)$ with $\Cl(A)_2$ the action of $Quinn(G)$ on the cusps translates into multiplication in the group $\Cl(A)$. More precisely

$$[J_{\kappa(c)}]=[\qq(M)J_c]=[\qq(M)][J_c]\;\;\;(in \;\Cl(A)).$$ 
\end{Thm}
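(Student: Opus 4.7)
The plan is to mirror the proof of Theorem \ref{multiplicationelli}, which is structurally almost identical but easier, because the ideals involved live in $A$ rather than $\widetilde{A}$ so there is no norm map to track.

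First I would observe the basic containment. Each generator of $J_{M(c)}$, namely $ax+by$ and $cx+dy$, is an $A$-linear combination of products of elements of $\qq(M)=(a)+(b)+(c)+(d)$ and elements of $J_c=xA+yA$. Hence $J_{M(c)}\subseteq \qq(M)J_c$. Since $A$ is a Dedekind domain and all the ideals involved are nonzero fractional ideals, there exists an integral $A$-ideal $I_1$ with $J_{M(c)}=\qq(M)J_c I_1$.

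Next I would iterate. Applying $M$ once more, using the representative $M\binom{x}{y}$ for $M(c)$, the same reasoning gives an integral $A$-ideal $I_2$ such that $J_{M^2(c)}=\qq(M)J_{M(c)}I_2=\qq(M)^2 J_c I_1 I_2=(\varDelta)J_c I_1 I_2$, where the last equality uses Theorem \ref{Misquasi}. On the other hand Corollary \ref{M^2}(i) writes $M^2=\varDelta N$ with $N\in SL_2(A)\subseteq G$, so $M^2\binom{x}{y}=\varDelta\, N\binom{x}{y}$, giving $J_{M^2(c)}=(\varDelta)J_{N(c)}$. Because both $N$ and $N^{-1}$ have entries in $A$, the generators of $J_c$ and $J_{N(c)}$ are $A$-linear combinations of each other, forcing $J_{N(c)}=J_c$. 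Hence $J_{M^2(c)}=(\varDelta)J_c$.

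Comparing the two expressions for $J_{M^2(c)}$ yields $(\varDelta)J_c=(\varDelta)J_c I_1 I_2$, so $I_1 I_2=A$, and since both are integral ideals in a Dedekind domain, $I_1=I_2=A$. Therefore $J_{M(c)}=\qq(M)J_c$ as ideals, from which $[J_{\kappa(c)}]=[\qq(M)][J_c]$ follows by passing to classes in $\Cl(A)$. There is no real obstacle here: everything is formal once the containment in the first step is noted and Corollary \ref{M^2}(i) is invoked. The only point requiring a brief verification is the equality $J_{N(c)}=J_c$ for $N\in SL_2(A)$, which however is immediate from $N,N^{-1}\in M_2(A)$.
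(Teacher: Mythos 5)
Your proof is correct and takes essentially the same route as the paper's: the containment $J_{M(c)}\subseteq\qq(M)J_c$, the Dedekind factorization $J_{M(c)}=\qq(M)J_cI_1$ with integral $I_1,I_2$, and the comparison against $J_{M^2(c)}=(\varDelta)J_c$ obtained from Corollary \ref{M^2}(i) to force $I_1=I_2=A$. The only difference is that you spell out the verification that $J_{N(c)}=J_c$ for $N\in SL_2(A)$, a step the paper leaves implicit.
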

\begin{proof} Since $A$ is a Dedekind domain there exists an $A$-ideal $I_1$ such that
$$J_{M(c)}=\qq(M)J_{c}I_1.$$
\noindent By Corollary \ref{M^2} (i) there exists an $A$-ideal $I_2$ with 
$$\Delta J_{c}=J_{M^2 (c)}=\qq(M)J_{M(c)}I_2 =\qq(M)^2 J_{c}I_1 I_2 = \Delta J_{c}I_1 I_2,$$
where $\Delta=\det(M)$.
Hence $I_1 = I_2 =A$ and the result follows.
\end{proof}

As in the previous section we have the following immediate consequence.

\begin{Cor} \label{actiononcusps}
If a non-trivial quasi-inner automorphism $\kappa$ fixes any cusp, then $\kappa$ reduces to an inner automorphism.
In particular, $Quinn(G)$ acts freely on $\Cusp(G)$.
\end{Cor}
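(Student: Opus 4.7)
The plan is to feed the fixed-cusp hypothesis directly into the multiplication formula of Theorem \ref{multiplicationcusp} and then cancel in the group $\Cl(A)$. Suppose $\kappa \in Quinn(G)$ is represented by $M \in N_{\widehat{G}}(G)$ (with entries cleared into $A$), and suppose $\kappa$ fixes a cusp $c$, i.e.\ $G(M(c))=Gc$. Under the bijection $\Cusp(G)\leftrightarrow\Cl(A)$ this becomes $[J_{M(c)}]=[J_c]$ in $\Cl(A)$.

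Now I would apply Theorem \ref{multiplicationcusp}, which rewrites the left-hand side as $[\qq(M)][J_c]$. Since $\Cl(A)$ is a group, we may cancel $[J_c]$ to conclude $[\qq(M)]=1$, i.e.\ $\qq(M)$ is a principal $A$-ideal. By Theorem \ref{isoto2tors} the isomorphism $N_{\widehat{G}}(G)/Z(K).G \cong \Cl(A)_2$ is induced by $M \mapsto \qq(M)$, so triviality of $[\qq(M)]$ in $\Cl(A)_2$ is equivalent to $M \in Z(K).G$. Thus $\kappa = \iota_M$ acts on $G$ as an inner automorphism, proving the first assertion.

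The ``in particular'' statement is then immediate: if a non-trivial element of $Quinn(G)$ had any fixed point on $\Cusp(G)$, the previous paragraph would force it into $Z(K).G$, contradicting non-triviality. Hence the only element of $Quinn(G)$ with a fixed cusp is the identity, which is precisely freeness of the action on $\Cusp(G)$.

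There is really no obstacle to speak of: the work has all been done in Theorem \ref{multiplicationcusp} and Theorem \ref{isoto2tors}. The only thing to be mildly careful about is that the cancellation takes place in the full group $\Cl(A)$ (not just in the $2$-torsion), which is why the cusp case is cleaner than the analogous argument for elliptic points --- there we needed the refinement in Lemma \ref{order4} because $\mathrm{ker}\,\overline{N}$ can contain elements of order $4$, but here the cancellation is unconditional.
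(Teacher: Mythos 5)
Your proof is correct and is exactly the argument the paper intends: the paper states this corollary as an immediate consequence of Theorem \ref{multiplicationcusp}, and the implicit reasoning is precisely your cancellation of $[J_c]$ in $\Cl(A)$ followed by the identification of $Quinn(G)$ with $\Cl(A)_2$ via $\qq$ from Theorem \ref{isoto2tors}. (Your closing aside slightly misattributes the role of Lemma \ref{order4} --- the elliptic analogue, Corollary \ref{freeelli}, also follows by the same unconditional cancellation --- but this does not affect the proof.)
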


\begin{Rk} \label{transoncusps}
\noindent  \rm $Quinn(G)$ acts transitively on $\Cusp(G)$ if and only if $\Cl(A)_2 =\Cl(A)$. \\
From the exact sequence in Section $2$  a necessary condition for this is $\delta\in\{1,2\}$. If $g(K)=0$, this condition is also sufficient,  as then $\mathrm{Cl}(A)\cong\ZZ/\delta\ZZ$.

\noindent But if $g(K)=g>0$, the action cannot be transitive for $q>9$ by an argument very similar to that used in  Remark 4.11. The inequality  
$$\frac{|\mathrm{Cl}^0(K)|}{|\mathrm{Cl}^0(K)_2|}\geq\frac{(\sqrt{q}-1)^{2g}}{2^{2g}}$$
shows that for fixed $q>9$ the number of orbits of $Quinn(G)$ on $\Cusp(G)$ tends to $\infty$ with $g(K)$. \\
\end{Rk}
\noindent The cusp $\infty$($={1\choose 0}$) corresponds to the principal $A$-ideals. Its orbit under $Quinn(G)$ corresponds to the $2$-torsion in $\Cl(A)$ and in the sense of Theorem \ref{multiplicationcusp} the action of $Quinn(G)$ on it translates into $\Cl(A)_2$ acting on itself by multiplication. \\
\noindent For every cusp $c$ represented by the ideal class $[J_c]$ in $\Cl(A)$ there corresponds its (group) inverse $[J_c]^{-1}$ in $\Cl(A)$. We can partition $\Cl(A)$ thus
$$Quinn(G) \leftrightarrow \Cl(A)_2 = \{[J_c]:[J_c]=[J_c]^{-1}\},$$
$$\Cl(A) \backslash \Cl(A)_2 = \{[J_c]:[J_c]\neq[J_c]^{-1}\}.$$

\noindent Our next result follows from Theorem \ref{multiplicationcusp}  analogous to the way Lemma \ref{order4} follows from Theorem \ref{multiplicationelli}.
\begin{Lem}\label{order4cusp} A quasi-inner automorphism $\kappa$, represented by $ M \in N_{\widehat{G}}(G)$, maps the cusp $c$ corresponding to $[J_c]$ in
$\Cl(A) \backslash \Cl(A)_2$, to the cusp corresponding to $[J_c]^{-1}$
if and only if $[J_c]$ has order $4$ and $[J_c]^2=\qq(M)$.
\end{Lem}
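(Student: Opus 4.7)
The plan is to apply Theorem \ref{multiplicationcusp} directly and translate the hypothesis on $\kappa(c)$ into a group-theoretic condition in $\Cl(A)$, mimicking the proof of Lemma \ref{order4} but working in $\Cl(A)$ rather than $\ker\overline{N}$.

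First I would note that the cusp corresponding to $[J_c]^{-1}$ is precisely the one whose ideal class is $[J_c]^{-1}$, so the condition ``$\kappa(c)$ maps to the cusp corresponding to $[J_c]^{-1}$'' is, by Theorem \ref{multiplicationcusp}, equivalent to the equation
\[
[\qq(M)][J_c] \;=\; [J_c]^{-1} \qquad \text{in } \Cl(A),
\]
i.e.\ $[\qq(M)] = [J_c]^{-2}$. Next I would use that $[\qq(M)] \in \Cl(A)_2$: this follows from Theorem \ref{Misquasi}, since $\qq(M)^2 = (\Delta)$ is principal, so $[\qq(M)]^2 = 1$.

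For the forward direction, from $[\qq(M)] = [J_c]^{-2}$ and $[\qq(M)]^2 = 1$ I get $[J_c]^{4}=1$, so the order of $[J_c]$ divides $4$. The hypothesis $[J_c] \in \Cl(A)\setminus\Cl(A)_2$ rules out orders $1$ and $2$, forcing the order to be exactly $4$. Then $[J_c]^{-2}=[J_c]^2$, and hence $[\qq(M)] = [J_c]^2$, i.e.\ $\qq(M)$ and $J_c^2$ represent the same ideal class, which is what the statement $[J_c]^2 = \qq(M)$ in the lemma means (as an identity in $\Cl(A)$).

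For the converse direction, assume $[J_c]$ has order $4$ and $[\qq(M)] = [J_c]^2$. Then
\[
[\qq(M)][J_c] \;=\; [J_c]^2[J_c] \;=\; [J_c]^3 \;=\; [J_c]^{-1},
\]
so by Theorem \ref{multiplicationcusp} the cusp $\kappa(c)$ corresponds to $[J_c]^{-1}$, as required. The argument is a clean group-theoretic manipulation; the only point needing care is the distinction between the ideal $\qq(M)$ and its class $[\qq(M)]$, but no genuine obstacle arises since the translation provided by Theorem \ref{multiplicationcusp} is already in place.
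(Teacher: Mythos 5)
Your proof is correct and follows essentially the same route the paper intends: the paper gives no separate argument for this lemma but declares it to follow from Theorem \ref{multiplicationcusp} exactly as Lemma \ref{order4} follows from Theorem \ref{multiplicationelli}, and your translation of the condition into $[\qq(M)]=[J_c]^{-2}$, combined with $[\qq(M)]^2=1$ and the exclusion of orders $1$ and $2$, is precisely that argument transplanted from $\ker\overline{N}$ to $\Cl(A)$. Your explicit remark that $[J_c]^2=\qq(M)$ must be read as an equality of ideal classes is a sensible clarification of the paper's slightly abusive notation.
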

\noindent In the next section we will use the results in Sections $5$ and $6$, together with 
Theorem \ref{transandfree} (ii), to examine in detail the action of $Quinn(G)$ on $G \backslash \TTT$. 
\section{Action on the quotient graph}\label{sect:graph}

\noindent The model used by Serre for $\TTT$ \cite[Chapter II, Section 1.1]{Serre} is based on two-dimensional so called \it lattice classes \rm. Since every quasi-inner automorphism, $\iota_g$,  can be represented by a matrix in $\widehat{G}$ it acts on $\TTT$ and hence $Quinn(G)$ acts on $G\backslash \TTT$.

\noindent In this section we investigate the action of a quasi-inner automorphism on the quotient graph $H \backslash \TTT$, where $H$ is a finite index subgroup of $G$. In the process we extend a  result  of Serre \cite[Exercise 2(e), p.117]{Serre} which motivated our interest in this question. We begin with a detailed account of Serre's classical  description of $G \backslash \TTT$. Serre's original proof  \cite[Theorem 9, p.106]{Serre} is based on the theory of vector bundles. For a more detailed version which refers explicitly to matrices see \cite{MasonTAMS}. In addition we use the results the previous sections to shed new light on the structure of $G \backslash \TTT$.\\

\begin{Def} A \bf ray \rm $\mathcal{R}$ in a graph $\mathcal{G}$ is an infinite half-line, without backtracking. In accordance with Serre's terminology \cite[p.104]{Serre} we call $\mathcal{R}$ \bf cuspidal \rm if all its non-terminal vertices have valency $2$ (in $\mathcal{G}$). 
\end{Def}

\noindent  Let $\{g_1, \cdots ,g_s\} \subseteq \widehat{G}$, where $s \geq 1$, be a complete system of representatives for $\Cl(A)_2( \cong N_{\widehat{G}}(G)/G.Z(K))$. Let $c_i=g_i(\infty)\;(1 \leq i \leq s)$. We will assume that $c_1=\infty$.  If $\Cl(A)=\Cl(A)_2$, then $\{c_1, \cdots , c_s\}$ is a complete system of representatives for $\Cl(A)$. If $\Cl(A) \neq \Cl(A)_2$ we can find further elements $h_1, \cdots ,h_t \in \widehat{G}$, where $t \geq 1$ so that
$$\mathcal{S}=\{ c_1, \cdots, c_s,\;d_1, \cdots,d_t\}$$ is a complete set of representatives for $\Cl(A)$, where $d_j=h_j(\infty)\;\;(1 \leq j \leq t)$. \\ \\
\begin{Thm}\label{Serre}  There exists a
complete system of representatives $\mathcal{C}\;(\subseteq \PP^1(K))$ for $\Cusp(G)$(equivalently, $\Cl(A))$ of the above type
 such that
$$ G \backslash \TTT=X \cup\left(\bigcup_{1 \leq i \leq s}\mathcal{R}(c_i)\right)\left(\bigcup_{1 \leq j \leq t} \mathcal{R}(d_j)\right),$$
\noindent  where
\begin{itemize}
\item[(i)] $X$ is finite,
\item[(ii)] each $ \mathcal{R}(c_i),\; \mathcal{R}(d_j)$ is a cuspidal ray (in $G\backslash \TTT$),
whose only interesection with $X$ consists of a single vertex,
\item[(iii)] the $|\Cl(A)|$ cuspidal rays are pairwise disjoint.
\end{itemize}
\noindent Moreover if $\mathcal{R}(e)$ is any of these cuspidal rays then it has a lift, $\overline{\mathcal{R}(e)}$, to $\TTT$
with the following properties. Let $\vert(\overline{\mathcal{R}(c)})=\{v_1, v_2 \cdots \}$. Then
\begin{itemize}
\item[(i)] $G_{v_i} \leq G_{v_{i+1}},\; (i \geq 1),$
\item[(ii)] $$\bigcup_{i \geq 1}G_{v_i}=G(c),$$
where $G(c)$ is the stabilizer (in $G$) of the cusp $c$.
\end{itemize}
\end{Thm}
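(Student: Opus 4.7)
The plan is to split the argument into two logically independent parts: the structural decomposition of $G \backslash \TTT$ (which is essentially Serre's classical result), and the refined choice of cusp representatives using $N_{\widehat{G}}(G)$ (which is the new content in this statement). For the first part, I would invoke Serre \cite[Theorem 9, p.106]{Serre} or, for a matrix-theoretic presentation, \cite{MasonTAMS}, to obtain the decomposition $G\backslash \TTT = X \cup \bigcup \mathcal{R}$ into a finite ``core'' and $|\Cl(A)|$ pairwise disjoint cuspidal rays.

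For the refinement concerning cusp representatives, I would proceed as follows. By Theorem \ref{isoto2tors}, the normalizer quotient $N_{\widehat{G}}(G)/Z(K).G$ is isomorphic to $\Cl(A)_2$. Pick $g_1=1,g_2,\ldots,g_s \in N_{\widehat{G}}(G)$ representing the cosets, and set $c_i = g_i(\infty)$. Since by Corollary \ref{actiononcusps} the group $Quinn(G)$ acts freely on $\Cusp(G)$, the $c_i$ represent distinct cusps, in fact precisely the $\Cl(A)_2$-orbit of the cusp $\infty$. Under the identification $\Cusp(G) \leftrightarrow \Cl(A)$ this orbit corresponds to $\Cl(A)_2$ itself. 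If $\Cl(A)=\Cl(A)_2$ we are done; otherwise, we choose any further $h_1,\ldots,h_t \in \widehat{G}$ so that $h_j(\infty)$ are representatives of the remaining ideal classes, giving the required complete system $\mathcal{S}$.

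For the internal structure of each lifted ray $\overline{\mathcal{R}(e)}$, I would use the fact that the stabilizer of the cusp $e = g(\infty)$ is $G(e) = gB(K)g^{-1} \cap G$, where $B$ denotes upper-triangular matrices. Choose the lift to be the $g$-translate of a standard apartment ray going ``towards $\infty$''. Along this ray, the stabilizer of the $i$-th vertex consists of those elements of $G(e)$ whose off-diagonal entry (in the $g$-conjugated Borel) lies in an $A$-lattice of increasing rank/degree; this yields the ascending chain $G_{v_i} \leq G_{v_{i+1}}$, and the union exhausts $G(e)$ because any element of $G(e)$ has a fixed degree bound on its off-diagonal entry.

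The main obstacle is the finiteness of $X$. This is where Serre's theorem genuinely uses reduction theory: one must bound the set of $G$-orbits on $\vert(\TTT)$ not lying on any cuspidal ray. The cleanest argument is via the bijection between vertices of $\TTT$ and rank-$2$ projective $\OOO_X$-modules (or equivalently rank-$2$ vector bundles on the affine curve $\mathrm{Spec}(A)$), where the Riemann--Roch theorem on the smooth projective model bounds the degrees of the Harder--Narasimhan filtration: indecomposable bundles contribute to the finite core, while the reducible extensions unfold as the cuspidal rays. Rather than reproduce this machinery, I would cite \cite[Theorem 9, p.106]{Serre} and \cite{MasonTAMS} for the quantitative statement, and limit the new argument to the extraction of the refined representatives $c_1,\ldots,c_s$ from the normalizer, which is the genuinely new ingredient.
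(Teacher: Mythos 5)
Your proposal is correct and takes essentially the same route as the paper: the paper gives no independent proof of this theorem, but presents it as a restatement of Serre's classical description (citing \cite[Theorem 9, p.106]{Serre} and \cite{MasonTAMS} for the decomposition into a finite core and cuspidal rays, including the ascending stabilizer chain along each lifted ray), with the only new ingredient being the choice of the representatives $c_i=g_i(\infty)$ from coset representatives of $N_{\widehat{G}}(G)/Z(K).G$, justified exactly as you do via Theorem \ref{multiplicationcusp} and the free action of $Quinn(G)$ on $\Cusp(G)$. Your additional sketches (Borel stabilizers along the ray, vector-bundle finiteness of $X$) are consistent with the cited sources and do not change the logical structure.
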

\noindent \rm  For each $j$ let $\widetilde{d_j}$ be the element of $\{d_1, \cdots,d_t\}$ corresponding to $h_j^{-1}(\infty)$. We may relabel the latter set as $\{d_1,\widetilde{d_1},\cdots,d_{t'},\widetilde{d_{t'}}\}$, where $t'=t/2$. We can use the results in Section 3 to elaborate on the structure of the above cuspidal rays.
We recall that $$ \mathcal{H}=\{ H \leq G: H \cong GL_2(\FF_q) \}.$$
\begin{Cor}\label{Serrecor} For the above set of $|\Cl(A)|$ cuspidal rays\begin{itemize}
\item[(i)] $$\mathcal{R}_1=\{\mathcal{R}(c_1), \cdots ,\mathcal{R}(c_s)\} \leftrightarrow \{\widetilde{v} \in \vert(G \backslash \TTT): G_v \in \mathcal{H}\}\leftrightarrow \Cl(A)_2.$$
\item[(ii)] $$\mathcal{R}_2=\left\{ \mathcal{R}(d_j),\mathcal{R}(\tilde{d_j}): 1\leq j \leq t' \right\}\leftrightarrow \Cl(A) \backslash\Cl(A)_2.$$
\end{itemize}
\end{Cor}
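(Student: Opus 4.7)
The plan is to decompose (i) into its two displayed bijections and then deduce (ii) by counting. The second correspondence in (i), $\{\tilde v\in\vert(G\backslash\TTT):G_v\in\mathcal{H}\}\leftrightarrow\Cl(A)_2$, is just a restatement of Theorem \ref{transandfree}(ii) combined with the isomorphism $Quinn(G)\cong\Cl(A)_2$ from Theorem \ref{isoto2tors}.

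For the first correspondence in (i), I would identify on each ray $\mathcal{R}(c_i)$ a distinguished $\mathcal{H}$-stabilized vertex via the $Quinn(G)$-action. The standard vertex $v_s$, with $G_{v_s}=GL_2(\FF_q)\in\mathcal{H}$, sits naturally at the base of $\mathcal{R}(c_1)=\mathcal{R}(\infty)$, as can be read off Serre's explicit lift of this cuspidal ray. For $i\ge 2$, the quasi-inner automorphism $\iota_{g_i}$ is an automorphism of $G\backslash\TTT$ sending $\infty$ to $c_i$, hence carries $\mathcal{R}(\infty)$ onto $\mathcal{R}(c_i)$ and $\widetilde{v_s}$ to a vertex of $\mathcal{R}(c_i)$ with stabilizer $\iota_{g_i}(GL_2(\FF_q))\in\mathcal{H}$. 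This gives the map $\mathcal{R}(c_i)\mapsto\iota_{g_i}(\widetilde{v_s})$. Its injectivity follows from the pairwise disjointness of the cuspidal rays (Theorem \ref{Serre}(iii)); its surjectivity is forced by a cardinality count, since by Theorem \ref{transandfree}(ii) the target set has exactly $|Quinn(G)|=|\Cl(A)_2|=s=|\mathcal{R}_1|$ elements.

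Part (ii) is then immediate. Theorem \ref{Serre} sets up a bijection between the full set of $|\Cl(A)|$ cuspidal rays and $\Cl(A)$, which by (i) identifies $\mathcal{R}_1$ with $\Cl(A)_2$; the remaining $|\Cl(A)|-|\Cl(A)_2|$ rays in $\mathcal{R}_2$ must therefore correspond to $\Cl(A)\setminus\Cl(A)_2$. The main obstacle I anticipate is the initial identification: in Serre's description of $\overline{\mathcal{R}(\infty)}$ one has to check that the vertex whose stabilizer is the full $GL_2(\FF_q)$ (and not merely the upper-triangular part $GL_2(\FF_q)\cap G(\infty)$) is precisely the one attached to the ray at its base. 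Once this is pinned down, the freeness and transitivity in Theorem \ref{transandfree}(ii) make the rest of the proof automatic, and the cardinality count rules out any other cuspidal ray carrying an $\mathcal{H}$-stabilized vertex.
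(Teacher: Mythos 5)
Your overall architecture (get the vertex set in bijection with $\Cl(A)_2$ from Theorem \ref{transandfree}(ii), then attach one ray to each vertex and finish by counting) is reasonable, but the step you yourself flag as ``the main obstacle'' is precisely where the content of the corollary lies, and your proposal does not supply it. The paper's proof closes this gap with an algebraic argument you do not have: given $\widetilde{v}$ with $G_v\in\mathcal{H}$, Lemmas \ref{transGL} and \ref{freeGL} give $H=gg_i\,GL_2(\FF_q)\,(gg_i)^{-1}$ for a \emph{unique} $i$ and some $g\in G$; then every unipotent $u\in H$ fixes the rational point $gg_ih(\infty)$ for some $h\in GL_2(\FF_q)$, so $u\in G(c)$ if and only if $c\in Gc_i$. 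Since by Theorem \ref{Serre} the vertex stabilizers along the lift of $\mathcal{R}(c_i)$ exhaust exactly $G(c_i)$, this pins the vertex to the ray $\mathcal{R}(c_i)$ and to no other. Your geometric substitute --- ``$v_s$ sits at the base of $\overline{\mathcal{R}(\infty)}$'' --- is left unverified, and it is genuinely delicate: in general the $\mathcal{H}$-vertices are \emph{not} vertices of the cuspidal rays at all (their stabilizers are not contained in any $G(c)$, and for $\delta=1$ they are isolated vertices, appearing as spikes adjacent to the rays, cf.\ the Remark following the corollary and the examples in Section 8).

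Two further consequences of this. First, because the vertices need not lie on the rays, your injectivity argument ``from the pairwise disjointness of the cuspidal rays'' does not apply; the correct source of injectivity is the freeness in Lemma \ref{freeGL} (if $\iota_{g_i}(\widetilde{v_s})=\iota_{g_j}(\widetilde{v_s})$ then $\iota_{g_j}^{-1}\iota_{g_i}$ fixes a vertex with stabilizer in $\mathcal{H}$, hence is inner, hence $i=j$). Second, your claim that $\iota_{g_i}$ ``carries $\mathcal{R}(\infty)$ onto $\mathcal{R}(c_i)$'' is stronger than what the graph automorphism gives: it carries the \emph{end} of $\mathcal{R}(\infty)$ to the end of $\mathcal{R}(c_i)$, but a priori only a tail of the one ray onto a tail of the other (the paper is explicit about this in Section \ref{sect:graph}). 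None of these defects is fatal to your strategy, but repairing them essentially forces you back to the unipotent-element argument, so you should regard the proposal as incomplete rather than as an alternative proof. Your reduction of (ii) to (i) by counting against Theorem \ref{Serre} is fine and matches the paper.
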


\begin{proof} Let $\widetilde{v} \in \vert(G \backslash \TTT)$, where $G_v \in \mathcal{H}$, and let $H \in \mathcal{H}$ be any representative of its stabilizer. Then, for some {\it unique} $i$,
$$H=gg_i(GL_2(\FF_q))(gg_i)^{-1},$$
\noindent where $g \in G$, by Lemmas \ref{transGL} and \ref{freeGL}. Now let $u$ be any unipotent element of $H$. Then $u$ fixes $gg_ih(\infty)$, for some $h \in GL_2(\FF_q)$. It follows that
$$u \in G(c) \Longleftrightarrow c=g'c_i,$$
\noindent where $g' \in G$. The rest follows from Corollary \ref{vertstab} together with Theorem \ref{transandfree}.
\end{proof}
\begin{Rk} Let $\widetilde{v} \in \vert(G \backslash \TTT)$, where $G_v \in\mathcal{H}$. Then it is shown in Corollary \ref{Serrecor} that $\widetilde{v}$ is adjacent in $G \backslash \TTT$
to a vertex whose stabilizer (up to conjugacy in $G$) is contained in $G(c_i)$, for some {\it unique} $i$. In this way $\widetilde{v}$ can be thought of as {\it closer} in $G \backslash \TTT$ to $\mathcal{R}(c_i)$ than to any other cuspidal ray. For the case $\delta=1$ (and only for this case) $\widetilde{v}$ is {\it isolated} in $G \backslash \TTT$ by \cite[ Theorem 5.1]{MSstabilizer}. As in Takahashi's example [Ta] such a $\widetilde{v}$ then appears as a ``spike" next to its associated cuspidal ray.

\end{Rk}
\noindent  \rm  For each subgroup $H$ of $G$ we recall that the elements of $H \backslash \TTT$ are
$$\vert(H \backslash \TTT)=\left\{Hv: v \in \vert(\TTT)\right\} \;\;\mathrm{and} \;\;\edge(H\backslash \TTT)=\left\{He: e \in \edge(\TTT)\right\}.$$ 

\begin{Def} Let $H,H^*$ be isomorphic subgroups of $G$. An isomorphism of graphs
$$\phi: H \backslash \TTT \rightarrow H^* \backslash \TTT,$$
\noindent is said to be \bf stabilizer invariant \rm if the following condition holds. 
\\ \\
\noindent For any $ w \in \vert(\TTT) \cup \edge(\TTT)$ let
$$\phi(Hw)=H^*w^*,$$
\noindent (where $w^* \in \vert(\TTT)$ if and only if $w \in \vert(\TTT)$). Then, for all $u \in H_w$ and $u^* \in H^*w^*$
$$H_u \cong H_{u^*}^*.$$ 
\end{Def}

\noindent As we shall see it is easy to find examples of isomorphisms of quotient graphs which are not stabilizer invariant.
\begin{Thm}\label{quasiquot} Let $\kappa=\iota_g$, where $g \in N_{\widehat{G}}(G)$ and let $H$ be a subgroup of $G$. Then the map
$$\overline{\kappa}_H: H \backslash \TTT \rightarrow \kappa(H) \backslash \TTT,$$
\noindent defined by
$$ \overline{\kappa}_H(Hw)= H'w',$$
\noindent where $H'=H^g=gHg^{-1}$, $w'=g(w)$ and $w \in \vert(\TTT) \cup \edge(\TTT)$, defines a stabilizer invariant  isomorphism of the quotient graphs
$$ \kappa(H)\backslash \TTT \cong H \backslash \TTT.$$ 
\end{Thm}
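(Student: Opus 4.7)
The plan is to show the map $\overline{\kappa}_H$ is well-defined, is a graph isomorphism, and preserves the full nested stabilizer structure, all of which follow from the basic equivariance: if $g \in N_{\widehat{G}}(G) \subseteq GL_2(K_{\infty})$ acts on $\TTT$ as a graph automorphism, then the actions of $H$ and of $H' = gHg^{-1}$ on $\TTT$ are intertwined by $g$ in the sense that $g \circ h = (ghg^{-1}) \circ g$ on $\vert(\TTT) \cup \edge(\TTT)$.

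First I would verify that $\overline{\kappa}_H$ is well-defined. If $Hw_1 = Hw_2$, pick $h \in H$ with $w_2 = h(w_1)$; then setting $h' := ghg^{-1} \in H'$ we have $g(w_2) = g h(w_1) = h' g(w_1)$, so $H'g(w_1) = H'g(w_2)$. Next, since $g \in GL_2(K_{\infty})$ acts as an automorphism of $\TTT$, it preserves the set of vertices, the set of edges, and the incidence relation; this immediately promotes $\overline{\kappa}_H$ to a graph homomorphism from $H \backslash \TTT$ to $H' \backslash \TTT$. The inverse is obtained by applying the same construction to $\iota_{g^{-1}}$: note that $g^{-1} \in N_{\widehat{G}}(G)$ and $g^{-1} H' g = H$, so the map $H'w' \mapsto H g^{-1}(w')$ is well-defined and is a two-sided inverse for $\overline{\kappa}_H$. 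Hence $\overline{\kappa}_H$ is an isomorphism of graphs.

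For the stabilizer invariance, the key observation is that the stabilizer in $H$ of $w$ equals $H \cap G_w$, and similarly for $H'$. Conjugation by $g$ gives
\[
H'_{w'} = H' \cap G_{g(w)} = gHg^{-1} \cap g G_w g^{-1} = g(H \cap G_w)g^{-1} = g H_w g^{-1},
\]
so the isomorphism of graphs is accompanied by a canonical isomorphism $H_w \xrightarrow{\sim} H'_{w'}$, $u \mapsto gug^{-1}$, of the vertex/edge stabilizers. To obtain the full stabilizer invariance as in the definition, I would observe that this isomorphism respects the subgroup lattice: for any $u \in H_w$, writing $u^* = gug^{-1} \in H'_{w'}$, the centralizer-type data (and in fact the entire $H$-conjugacy class of $u$ within $H_w$) is carried to the corresponding data in $H'_{w'}$ by the same conjugation, so the stabilizers $H_u$ (of the elements fixed by $u$ in $\TTT$) and $H'_{u^*}$ match up under $g$ in precisely the same way.

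I do not expect any genuine obstacle; the only point that requires a small amount of care is to keep straight that $g$ need not lie in $G$, so one must work in the ambient group $GL_2(K_{\infty})$ to justify that $g$ acts on $\TTT$, and then use the normalizer condition $gHg^{-1} \subseteq G$ to ensure that the image group $H'$ really is a subgroup of $G$ so that $H' \backslash \TTT$ makes sense.
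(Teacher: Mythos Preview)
Your argument is correct and is essentially the same approach as the paper's, only more explicit: the paper's proof simply notes one well-definedness point and then declares ``the rest is obvious (since $g$ acts on $\TTT$).''

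Two small remarks. First, the paper highlights a \emph{different} well-definedness issue than the one you check: since the map is denoted $\overline{\kappa}_H$ rather than $\overline{g}_H$, one should verify it depends only on $\kappa$ and not on the particular $g$ with $\kappa=\iota_g$. If $\kappa=\iota_{g_1}$ as well, then $gg_1^{-1}\in Z(K)$, and scalar matrices act trivially on $\TTT$, so $g(w)=g_1(w)$ for every $w$. You verify orbit-independence in $Hw$ but omit this point; the paper does the reverse. Second, your last paragraph on stabilizer invariance wanders a bit (the phrase ``$H_u$ (of the elements fixed by $u$ in $\TTT$)'' is not what the definition intends). The clean statement is exactly your displayed computation $H'_{w'}=gH_wg^{-1}$: this already gives $H_u\cong H'_{u^*}$ for any $u\in Hw$ and $u^*\in H'w'$, because stabilizers of points in the same orbit are conjugate inside the acting group. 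Nothing further is needed.
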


\begin{proof} Note that $\overline{\kappa}_H$ is well defined since if $\kappa(x)=g_1xg_1^{-1}$, where $g_1 \in N_{\widehat{G}}(G)$ then $gg_1^{-1} \in Z(K)$ and $Z_{\infty}$, the set of scalar matrrices in $GL_2(K_{\infty})$, stabilizes every $w$. The rest is obvious (since $g$ acts on $\TTT$). 
\end{proof}
\noindent  \rm Let $H$ be any finite index subgroup of $G$ and let $M$ be the largest normal subgroup of $G$ contained in $H$. Then $N =M \cap M^g$ is the largest (finite index) subgroup of $G$, contained in $H$, which is normalized by $G,\;Z(K)$ and $g$. (See Section $2$.) 
\begin{Cor}\label{quasiquotcor} Suppose that $\kappa$ is non-trivial (i.e. $g \notin G.Z(K)$). Let $N$ be a finite index normal subgroup of $G$  normalized by $\kappa$. Then the map
$$\overline{\kappa}_N: N \backslash \TTT \rightarrow N \backslash\TTT,$$
\noindent defined as above, is a non-trivial stabilizer invariant automorphism whose order $n$ is even. Moreover, if $Z \leq N$, then $n=2m$, where $m$ divides $|G:N|$. 
\end{Cor}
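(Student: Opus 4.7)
The plan is to combine Theorem \ref{quasiquot} with the faithfulness of the $Quinn(G)$-action on $G\backslash\TTT$ granted by Theorem \ref{auto2int}.

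First, since $\kappa$ is assumed to normalize $N$, we have $\kappa(N)=gNg^{-1}=N$, so Theorem \ref{quasiquot} immediately supplies a stabilizer-invariant automorphism $\overline{\kappa}_N$ of $N\backslash\TTT$. For non-triviality, observe that the natural projection $\pi\colon N\backslash\TTT\to G\backslash\TTT$ is surjective and satisfies $\pi\circ\overline{\kappa}_N=\kappa\circ\pi$ (both sides send $Nw$ to $Gg(w)$). If $\overline{\kappa}_N$ were the identity, then $\kappa$ would fix every vertex and edge of $G\backslash\TTT$; this contradicts Theorem \ref{auto2int}, which guarantees that a non-trivial element of $Quinn(G)$ acts with order exactly $2$ on $G\backslash\TTT$.

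The next step is to pin down the square. By Corollary \ref{M^2}(i), writing $\Delta=\det(g)$, the element $h:=\Delta^{-1}g^2$ already lies in $G$. Since scalar matrices act trivially on $\TTT$, the square $\overline{\kappa}_N^2$ coincides with the automorphism of $N\backslash\TTT$ induced by $h\in G$. Under the hypothesis $Z\leq N$, both $N$ and the scalars $Z$ are contained in the kernel of the $G$-action on $N\backslash\TTT$, so this action factors through the finite quotient $G/N$. Consequently the order $m$ of $\overline{\kappa}_N^2$ divides $|G:N|$.

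Finally, to show $n$ is even (that is, $n=2m$ rather than $n=m$), I would argue by contradiction. If $n$ were odd, then because $n+1$ is even we could write $\overline{\kappa}_N=(\overline{\kappa}_N^2)^{(n+1)/2}$, and so $\overline{\kappa}_N$ itself would equal the automorphism induced by $h':=h^{(n+1)/2}\in G$. Equivalently, $b:=(h')^{-1}g\in N_{\widehat{G}}(G)$ would act trivially on $N\backslash\TTT$, and therefore (through $\pi$) trivially on $G\backslash\TTT$. By Theorem \ref{auto2int} this forces $b\in Z(K).G$, whence $g\in Z(K).G$, contradicting the non-triviality of $\kappa$. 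Hence $n$ is even and $n=2m$ with $m\mid|G:N|$.

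The main obstacle is this final parity step. Stabilizer invariance and the divisibility $m\mid|G:N|$ follow almost directly from Theorem \ref{quasiquot} and Corollary \ref{M^2}, but ruling out $n=m$ requires reading Theorem \ref{auto2int} as a faithfulness statement for the $Quinn(G)$-action on $G\backslash\TTT$ and then carefully lifting triviality from the $N$-quotient down to the $G$-quotient through the surjective equivariant projection $\pi$.
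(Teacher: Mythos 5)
There is a genuine gap: your argument is circular at its crucial step. Both your proof of non-triviality and your parity argument invoke Theorem \ref{auto2int}, but in the paper's logical development that theorem \emph{is} a consequence of the corollary you are asked to prove: it is restated in the body as Corollary \ref{Serreeven}, which the paper explicitly derives as ``a special case of Corollary \ref{quasiquotcor}'' (namely $N=G$, combined with Corollary \ref{exquasi-inner}). So the faithfulness of the $Quinn(G)$-action on $G\backslash\TTT$ is exactly what must be established here, not something you may assume. The paper supplies the missing ingredient directly: it picks a vertex $v_0\in\vert(\TTT)$ with non-central $G_{v_0}\leq G(\infty)$, assumes $\overline{\kappa}_G$ fixes $Gv_0$, deduces that some $g'=gg_0$ with $g_0\in G$ is upper triangular, and then uses Theorem \ref{Misquasi} (the relation $a^2A=c^2A=acA$ among the diagonal entries, as in the proof of Theorem \ref{isoto2tors}) to force $a,c\in\FF_q$ and hence $g\in G.Z(K)$, a contradiction. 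Without some such independent argument your proof of non-triviality does not stand.

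The rest of your proposal is sound and close to the paper's. Stabilizer invariance is indeed immediate from Theorem \ref{quasiquot}; your observation that $\overline{\kappa}_N^2$ is induced by $h=\Delta^{-1}g^2\in G$ (Corollary \ref{M^2}(i)) and that the $G$-action on $N\backslash\TTT$ factors through $G/N$ correctly gives $m\mid|G:N|$ (in fact you do not even need $Z\leq N$ for this, whereas the paper routes it through $|G.Z(K):N.Z(K)|=|G:N|$). Your parity trick is also essentially the paper's ($1=an+2b$ when $n$ is odd), and it can be repaired without Theorem \ref{auto2int}: if $n$ were odd, then $\overline{\kappa}_N=(\overline{\kappa}_N^{2})^{(n+1)/2}$ is induced by an element of $G$, hence projects under $\pi$ to the identity on $G\backslash\TTT$, contradicting the non-triviality of $\overline{\kappa}_G$ that you will have already established by the direct matrix argument. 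Restructured this way, the proof goes through; as written, it does not.
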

\begin{proof} To prove that $\overline{\kappa}_N$ is non-trivial it suffices to prove that $\overline{\kappa}_G$ is not the identity map. There exists $v_0 \in \vert(\TTT)$ for which (non-central) $G_{v_0} \leq G(\infty)$ \cite[Lemma 3.2]{MasonTAMS}. Suppose to the contrary that
$\overline{\kappa}_G$ fixes $Gv_0$. Then there exists $g_0 \in G$ such that $g' =gg_0 \in G(\infty)$ which implies that
$$ g'=\left[\begin{array}{cc}a&b\\0&c\end {array}\right].$$
\noindent We may assume that $a,b,c \in A$. By Theorem \ref{Misquasi}, together with an argument used in the proof of Theorem \ref{isoto2tors}, it follows that 
$$a^2A=c^2A=acA.$$
Hence $a,c \in \FF_q$. Thus $g' \in G$ and so $ g \in G.Z(K)$. \\
For the second part $n$ is the smallest $n(>0)$ such that $g^n \in N.Z(K)$. Now $g^2 \in G.Z(K)$ by Corollary \ref{M^2}(i). If $n$ is odd then $g \in G.Z(K)$. Hence $n=2m$ is even. In addition when $Z \leq N$  $m$ divides $|G.Z(K):N.Z(K)|=|G:N|$.
\end{proof}

\noindent A special case of Corollary \ref{quasiquotcor}, combined with Corollary \ref{exquasi-inner}, is the following. 
\begin{Cor}\label{Serreeven} Suppose that $|\Cl(A)|=|\Cusp(G)|$ is even. Then there exists a 
stabilizer invariant automorphism of $G \backslash \TTT$ of order $2$. 
\end{Cor}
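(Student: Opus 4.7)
The plan is to obtain this as a direct specialization of Corollary \ref{quasiquotcor}, taking $N=G$ itself.

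First I would invoke Corollary \ref{exquasi-inner}: since $|\Cl(A)|$ is even, the group $G$ admits a non-trivial quasi-inner automorphism $\kappa=\iota_g$ for some $g\in N_{\widehat{G}}(G)\setminus G.Z(K)$. This produces the raw material, a non-trivial element of $Quinn(G)$, on which everything else will be built.

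Next I would choose $N=G$ in Corollary \ref{quasiquotcor}. This choice is legitimate because $G$ is obviously a finite index normal subgroup of itself, it is normalized by $\kappa$ (since $g$ lies in $N_{\widehat{G}}(G)$, conjugation by $g$ preserves $G$), and it contains $Z=Z(K)\cap G$. All the hypotheses of Corollary \ref{quasiquotcor} are therefore in force with this choice of $N$.

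Finally I would read off the conclusion: the induced map $\overline{\kappa}_G:G\backslash\TTT\to G\backslash\TTT$ is a non-trivial stabilizer invariant automorphism whose order $n$ is of the form $n=2m$, where $m$ divides $|G:N|=|G:G|=1$. Hence $m=1$ and $n=2$, yielding an automorphism of $G\backslash\TTT$ of order exactly $2$ with the desired stabilizer invariance.

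There is no real obstacle: the statement is, by design, the cleanest consequence of the machinery built up in the preceding corollary, and the only thing to verify is the pigeonhole-style observation that the divisibility $m\mid 1$ pins the order down to $2$ rather than allowing it to be any larger even integer.
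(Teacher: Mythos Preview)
Your proposal is correct and is precisely the paper's own argument: the statement is obtained as the special case $N=G$ of Corollary~\ref{quasiquotcor}, after invoking Corollary~\ref{exquasi-inner} to produce a non-trivial quasi-inner automorphism from the parity hypothesis on $|\Cl(A)|$. The only computation is exactly the one you make, namely $m\mid |G:G|=1$ forces the order to be $2$.
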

\noindent Serre \cite[Exercise 2(e), p.117]{Serre} states this result for the case $g(K)=0$ (i.e. $K=\FF_q(t)$) and $\delta$ even. The restriction here is necessary. For the case $g(K)=0,\;\delta=1$, in which case $A=\FF_q[t]$ and $|\Cl(A)|=1$, it is known by Nagao's Theorem \cite[Corollary, p.87]{Serre} that $G\backslash \TTT$ is a cuspidal ray whose terminal vertex is isolated. Here then the only (graph) automorphism is trivial. \\ 
\noindent Corollary \ref{Serreeven} shows that $Quinn(G)$ acts non-trivially on $G \backslash \TTT$. This extends to an action on its cuspidal rays which we now describe. We use the notation of Theorem \ref{Serre}.
\begin{Def} Let $\mathcal{R}_1,\mathcal{R}_2$ be rays in a graph $\mathcal{G}$. We write
$$\mathcal{R}_1 \sim \mathcal{R}_2$$
if and only if $|\mathcal{R}_i \backslash \mathcal{R}_1 \cap \mathcal{R}_2| < \infty\;(i=1,2)$.
This a well-known equivalence relation. The equivalence class containing the ray $\mathcal{R}$ is called the \bf end (\rm of $\mathcal{G}$)  determined by $\mathcal{R}$.
In the notation of Theorem \ref{Serre} we denote by $\mathcal{E}(e)$
the end (in $G \backslash \TTT$) determined by $\mathcal{R}(e)$, where
$e=c_i,d_j$.
\end{Def}
\noindent Now let $\kappa=\iota_g$, where $g \in N_{\widehat{G}}(G
\backslash G.Z(K)$, be a non-trivial quasi-inner automorphism and let $\widehat{\kappa}$ be the corresponding (non-trivial) element of $Quinn(G)$. Now fix $e \in \mathcal{S}$. Let $e^*=\kappa(e)$. Then by Corollary \ref{actiononcusps} $e \neq e^*$ and we may assume that $e^* \in \mathcal{S}$.
\noindent As in Theorem \ref{Serre}
$$\vert(\mathcal{R}(e))=\{\widetilde{v}_1,\widetilde{v}_2, \cdots \}\;\mathrm{and}\;\vert(\mathcal{R}(e^*))=\{\tilde{v}_1^*,\tilde{v}_2^*, \cdots \}.$$
\noindent Recall that $$\bigcup_{i \geq 1}G_{v_i}=G(e),$$
\noindent and that $G_{v_i} \leq G_{v_{i+1}}\;(i \geq 1)$. In addition it is known \cite[Theorem 2.1 (a)]{MSstabilizer} that there exists a normal subgroup $N_i$ of $G_{v_i}$ such that
$$G_{v_i}/N_i \cong \FF_q^* \times\FF_q^*,$$
\noindent where $N_i \cong V_i^+$, the additive group of a $\FF_q$-vector space of dimension $n_i$. It is also known that $n_i < n_{i+1}$. Corresponding results hold for $\mathcal{R}(e^*)$.
\noindent Now let
 $$m_X=\max \{|G_v|: v \in \vert(X)\}.$$ 
 \noindent (Recall that $X$ is \it finite.\rm ). Now choose any $m > m_X$.
By the definition of graph automorphism $\overline{\kappa_G}$ (determined by the non-trivial element $\widehat{\kappa}$ of $Quinn(G)$, together with Theorem \ref{Serre}, there exists $n > m_X$ such that
$$\overline{\kappa}_G:\tilde{v}_{m+i}\mapsto\tilde{v}_{n+i}^*\;,$$
\noindent for all $i\geq 0$. This gives rise to a map
$$\widehat{\kappa}: \mathcal{E}(e) \mapsto \mathcal{E}(e^*),$$
which in turn defines a $Quinn(G)$-action on the ends defined by the cuspidal rays in $G \backslash \TTT$. (Theorem \ref{Serre}.) Since this action coincides precisely with the action of $Quinn(G)$ on $\Cusp(G)$ the following result is an immediate consequence of Theorem \ref{transandfree}(ii), Corollary \ref{actiononcusps} and Lemma \ref{order4cusp}.
\begin{Cor}\label{cuspidalaction} With the notation of Theorem \ref{Serre},
\begin{itemize}
\item[(i)] $Quinn(G)$ acts (simultaneously) freely and transitively on
$$\{\mathcal{E}(c_1), \cdots ,\mathcal{E}(c_s)\}\;\mathrm{and}\;\{\widetilde{v} \in \vert(G \backslash \TTT): G_v \cong GL_2(\FF_q)\},$$
\item[(ii)] $Quinn(G)$ acts freely on
$$\left\{ \mathcal{E}(d_j),\mathcal{E}(\tilde{d_j}): 1\leq j \leq t' \right\},$$
\item[(iii)] $Quinn(G)$ acts on
$$\left\{ \{\mathcal{E}(d_j),\mathcal{E}(\tilde{d_j})\}: 1\leq j \leq t' \right\}.$$
\item[(iv)] Under the action of $Quinn(G)$  some $\mathcal{E}(d_j)$ is mapped to $\mathcal{E}(\tilde{d_j})$ if and only if $d_j$ has order $4$ in $\Cl(A)$.
\end{itemize}
\end{Cor}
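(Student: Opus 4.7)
My plan is to reduce all four parts to the identification of the $Quinn(G)$-action on ends with the $Quinn(G)$-action on cusps, which via $\Cusp(G)\cong\Cl(A)$ and Theorem \ref{multiplicationcusp} is simply multiplication of $\Cl(A)$ by the subgroup $\Cl(A)_2\cong Quinn(G)$. Theorem \ref{Serre} and Corollary \ref{Serrecor} supply the bijection between the $|\Cl(A)|$ cuspidal rays of $G\backslash\TTT$ and $\Cl(A)$, matching $\{\mathcal{R}(c_1),\ldots,\mathcal{R}(c_s)\}$ with $\Cl(A)_2$ and $\{\mathcal{R}(d_j),\mathcal{R}(\tilde{d_j})\}$ with $\Cl(A)\backslash\Cl(A)_2$. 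Passing from each cuspidal ray to the end it determines transfers the automorphism $\overline{\kappa}_G$ into the multiplication-by-$\qq(M)$ action on $\Cl(A)$.

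With this dictionary in hand, each item becomes a short group-theoretic verification. For (i) I would note that $\Cl(A)_2$ acts on itself by left multiplication freely and transitively; for the ``simultaneously'' clause I would combine Corollary \ref{Serrecor}(i) with Theorem \ref{transandfree}(ii) to observe that the induced action on $\{\widetilde{v}:G_v\cong GL_2(\FF_q)\}$ is the same translation action under the established bijection, so freeness and transitivity there are already known. Part (ii) is Corollary \ref{actiononcusps} restricted to the orbits lying in $\Cl(A)\backslash\Cl(A)_2$. For (iii) I would verify well-definedness by noting that $[J_{\tilde{d_j}}]=[J_{d_j}]^{-1}$ by construction, and that for any $a\in\Cl(A)_2$ one has $(a[J])^{-1}=a[J]^{-1}$, so multiplication by $a$ permutes the unordered inverse pairs $\{[J],[J]^{-1}\}$. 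For (iv) I would invoke Lemma \ref{order4cusp} directly: $\kappa=\iota_M$ sends $d_j$ to $\tilde{d_j}$ iff $[J_{d_j}]$ has order $4$ in $\Cl(A)$ and $[J_{d_j}]^2=\qq(M)$; since $M\mapsto\qq(M)$ surjects onto $\Cl(A)_2$ by Theorem \ref{isoto2tors} and $[J_{d_j}]^2\in\Cl(A)_2$ whenever $[J_{d_j}]$ has order $4$, the existence of such a $\kappa$ is equivalent to $d_j$ having order $4$ in $\Cl(A)$.

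The only step requiring care -- and the main conceptual obstacle -- is confirming that the end-action $\widehat{\kappa}\colon\mathcal{E}(e)\mapsto\mathcal{E}(\kappa(e))$, defined through the asymptotic behaviour of $\overline{\kappa}_G$ on cuspidal rays, genuinely agrees with the action of $\kappa$ on cusps. This is forced by Theorem \ref{Serre}: along a lift of $\mathcal{R}(c)$ to $\TTT$ the vertex stabilizers ascend to $G(c)$, so $\overline{\kappa}_G$ must map $\mathcal{R}(c)$ to the unique cuspidal ray along which the stabilizers ascend to $\kappa(G(c))=G(\kappa(c))$, namely $\mathcal{R}(\kappa(c))$. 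Once this compatibility is recorded, the four items follow with no further computation.
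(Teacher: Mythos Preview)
Your proposal is correct and follows essentially the same route as the paper: the paper also establishes (in the paragraph immediately preceding the corollary) that the induced action of $Quinn(G)$ on the ends coincides with its action on $\Cusp(G)$, and then declares the result an immediate consequence of Theorem~\ref{transandfree}(ii), Corollary~\ref{actiononcusps}, and Lemma~\ref{order4cusp}. Your verification of the end--cusp compatibility via the ascending chain of stabilizers $\bigcup G_{v_i}=G(c)$ is exactly the mechanism the paper uses (it phrases it through the bound $m_X$ to locate the image ray outside $X$, but the identification of \emph{which} cuspidal ray relies on the same stabilizer argument).
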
 
\noindent We recall from Proposition \ref{2,3,odd} that when $\delta$ is odd $Quinn(G)$ also acts on $\{\widetilde{v} \in \vert(G \backslash \TTT): G_v \cong \FF_{q^2}^*\}$.

\noindent Our final result in this section concerns the action of $N_{\widehat{G}}(G)$ on $\TTT$. it is known \cite[Corollary, p.75]{Serre}  that $G$ acts \it without inversion \rm (on the edges) of $\TTT$. \begin{Prop}\label{deltaodd} Suppose that $\delta$ is odd. Then every $\iota_g$ acts without inversion on $\TTT$ and hence on every quotient graph $H \backslash \TTT$.
\end{Prop}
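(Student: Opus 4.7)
The plan is to reduce the claim to the standard bipartite criterion for the Bruhat--Tits tree: an element $h\in GL_2(K_\infty)$ acts on $\TTT$ without inversion if and only if $v_\infty(\det h)$ is even. Recall that vertices of $\TTT$ are similarity classes of $\OOO_\infty$-lattices in $K_\infty^2$, and for such a class $[L]$ the parity of $v_\infty(\det L)$ (computed in any basis of $L$) is an invariant of the class, since changing basis multiplies $\det L$ by a unit and rescaling $L$ by $\lambda\in K_\infty^*$ multiplies it by $\lambda^2$. Adjacent classes have opposite parities, so $\TTT$ is bipartite by parity and $h$ preserves the bipartition---hence cannot swap the endpoints of any edge---exactly when $v_\infty(\det h)$ is even.

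The main step is to verify this parity condition via Theorem~\ref{Misquasi}. First I would represent $\iota_g$ by a matrix $M\in N_{\widehat{G}}(G)$ with entries in $A$, which can be arranged by multiplying $g$ by a suitable scalar in $K^*$; such a rescaling only multiplies $\det M$ by a square, so it does not change the parity of $v_\infty(\det M)$. Theorem~\ref{Misquasi} then gives the fractional $A$-ideal identity $\qq(M)^2=(\varDelta)$ with $\varDelta=\det M$. Using the elementary identity $\deg((\alpha))=-\delta\,v_\infty(\alpha)$ for $\alpha\in K^*$---which follows from the fact that the full principal divisor of $\alpha$ on $K$ has degree $0$ and that the place $\infty$ has degree $\delta$---and taking ideal degrees on both sides, I obtain
$$2\deg\qq(M)=-\delta\,v_\infty(\varDelta).$$
Since $\delta$ is odd, the right-hand side is even only when $v_\infty(\varDelta)$ is even; thus $v_\infty(\det g)$ is even, and by the criterion above $g$ preserves the bipartition of $\TTT$ and acts without inversion.

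The extension to any quotient $H\backslash\TTT$ is then automatic: $H\leq G$ itself acts on $\TTT$ without inversion \cite[Corollary, p.75]{Serre}, and the map on the quotient induced by $g$ (via Theorem~\ref{quasiquot}) cannot invert an edge, because any such inversion would lift to an inversion by $g$ of some edge of $\TTT$, contradicting what was just shown. I do not foresee a substantive obstacle here; the only delicate point is the degree--valuation bookkeeping identity $\deg((\alpha))=-\delta\,v_\infty(\alpha)$, which is precisely where the odd-$\delta$ hypothesis enters.
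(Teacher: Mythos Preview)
Your proposal is correct and follows essentially the same route as the paper: represent $\iota_g$ by a matrix $M$ with entries in $A$, invoke Theorem~\ref{Misquasi} to see that $(\varDelta)=\qq(M)^2$ is a square $A$-ideal, and deduce that $v_\infty(\varDelta)$ is even when $\delta$ is odd, then appeal to the parity criterion from \cite[Corollary, p.75]{Serre}. The only cosmetic difference is that the paper phrases the parity step via the product formula (each finite $v(\varDelta)$ is even, hence $\delta\,v_\infty(\varDelta)$ is even), whereas you package the same computation as the ideal-degree identity $2\deg\qq(M)=-\delta\,v_\infty(\varDelta)$.
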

\begin{proof} As in Theorem \ref{Misquasi} we can represent $\iota_g$ with a matrix $M$ in $\widehat{G}$
and we can assume that all its entries lie in $A$. Let $\Delta= \det(M)$. Then the $A$-ideal generated by $\Delta$ is the \it square \rm of an ideal in $A$, again by Theorem \ref{Misquasi}. It follows that, for all places $v \neq v_{\infty}$, $v(\Delta)$ is even. \\
By the product formula then $\delta v_{\infty}(\Delta)$ and hence $v_{\infty}(\Delta)$ is even. The result follows from \cite[Corollary, p.75]{Serre}.
\end{proof}

\begin{Exm} To conclude this section we consider the case where $g(K)=0$ and $\delta(K)=2$. We recall that there exists a quadratic polynomial $\pi \in \FF_q[t]$, irreducible over $\FF_q$, such that

$$ A=\left\{ \frac{f}{\pi^m}: f \in \FF_q[t],\;m \geq 0, \deg f \leq 2m \right\}.$$
In this case it is known that $\Cl(A)_2=\Cl(A) \cong Quinn(G) \cong \ZZ/2\ZZ$.
It is well-known that $G \backslash \TTT$ is a doubly infinite line, without backtracking. See \cite[ 2.4.2 (a), p.113]{Serre} and, for a more detailed description, \cite[Section 3]{MSJLMS}. It is known that $G\backslash \TTT$ lifts to a doubly infinite line $\mathcal{D}$ in $\TTT$ which we now describe in detail. For some $g_0 \in N_{\widehat{G}}(G) \backslash G.Z(K)$, $\vert(\mathcal{D})= \{v_0,v_0^*,\;v_1,v_1^*,\cdots\}$, where 
\begin{itemize}
\item[(i)] $v_i^*=g_0(v_i)\;(i \geq 0)$,
\item[(ii)] $G_{v_i^*}=(G_{v_i})^{g_0}\; (i\geq 0))$,
\item[(iii)] $G_{v_0}=GL_2(\FF_q)$,
\item[(iv)] for each $i \geq 1$$$ G_{v_i}=\left\{\left[\begin{array}{lll} \alpha&c\pi^{-i}\\[10pt]
0 & \beta\end{array}\right]:\alpha,\beta \in \FF_q^*,\deg c \leq 2i\right\}.$$ 
\end{itemize}
Then $\mathcal{D}$ maps onto (and is isomorphic to) $G\backslash \TTT$ which has the following structure. 
\begin{center}
	\setlength{\unitlength}{1pt} \thicklines
	\begin{picture}(340,50)(-150,-30)
	\put(-100,0){\circle{7}} \put(-50,0){\circle{7}}
	\put(0,0){\circle{7}} \put(50,0){\circle{7}}
	\put(100,0){\circle{7}} \put(-97,0){\line(1,0){44}}
	\put(-47,0){\line(1,0){44}} \put(3,0){\line(1,0){44}}
	\put(53,0){\line(1,0){44}} \put(-115,0){\line(1,0){12}}
	\put(103,0){\line(1,0){12}}
	\put(-120,0){\circle{.5}} \put(-125,0){\circle{.5}}
	\put(-130,0){\circle{.5}} \put(-100,-12){\makebox(0,0){$\overline{v_{2}}$}}
	\put(-50,-12){\makebox(0,0){$\overline{v_{1}}$}}
	\put(0,-12){\makebox(0,0){$\overline{v_0}$}}
	\put(50,-12){\makebox(0,0){$\overline{v_0^*}$}}
	\put(100,-12){\makebox(0,0){$\overline{v_1^*}$}}
	\put(103,0){\line(1,0){44}}
	\put(150,-12){\makebox(0,0){$\overline{v_2^*}$}}
	\put(150,0){\circle{7}} \put(153,0){\line(1,0){12}}
	\put(150,-12){\makebox(0,0){.}}
	\put(170,0){\circle{.5}}\put(175,0){\circle{.5}} \put(180,0){\circle{.5}}
	\end{picture}
\end{center}	
\noindent The action of the (essentially only) non-trivial quasi-inner automorphism of $G \backslash\TTT$ (represented by $g_0$) is given by 
\begin{center}$\overline{v_i} \leftrightarrow \overline{v_i^*}\;(i \geq 0).$
\end{center}

\noindent We note two features of $\mathcal{D}$ which are of interest relevant to this section. \\ 
(i) From the structure of $\mathcal{D}$ it is clear that the non-trivial quasi-inner automorphism determined by $g_0$ \it inverts \rm the edge joining $v_0$ and $v_0^*$, which shows that the restriction on $\delta$ in Proposition \ref{deltaodd} is necessary. \\
(ii) For this case there is only one stabilizer invariant involution. However the \it graph \rm $G \backslash \TTT$ has many automorphisms. Infinitely many examples include translations (which have infinite order) and reflections in any vertex (which are involutions). \\
\end{Exm}

\section{Two instructive examples}\label{sect:examples}
\noindent  We conclude with two examples which demonstrate how our results apply to the structure of the quotient graph $G \backslash \TTT$. Both are  \it elliptic \rm function fields $K/ \FF_q $. We record some of their basic properties. 
\begin{Def} A function field $K/ \FF_q $ is \bf elliptic \rm \cite[p.217]{Stich} if
 $g(K)=1$ and $K$ has a place $\infty$ of degree $1$.
\end{Def}
\begin{Thm} Suppose that $K/\FF_q$ is elliptic. Then
\begin{itemize}
\item[(i)] $$K=\FF_q(x,y),$$ where $x,y$ satisfy a (smooth) Weierstass equation $F(x,y)=0$ with 
$$F(x,y)=y^2+a_1xy+a_3y -x^3-a_2x^2-a_4x-a_6 \in \FF_q[x,y].$$ 
\item[(ii)] $\Cl^0(K)(\cong \Cl(A))$ is isomorphic to
$E(\FF_q)$, the group of \it $\FF_q$-rational points, $\{(\alpha,\beta) \in \FF_q \times \FF_q: F(\alpha,\beta)=0 \} \cup \{(\infty,\infty)\}$. Here the group operation is point addition $\oplus$ according to the chord-tangent law. 
\end{itemize}
\end{Thm}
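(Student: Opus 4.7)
The plan is to treat the two parts separately using the standard Riemann--Roch machinery; this is essentially the classical derivation of the Weierstrass model of an elliptic curve, transposed into the function-field setting.

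For part (i), I would exploit the place $\infty$ of degree $1$ to build Riemann--Roch spaces $L(n\infty)$. Since $g=1$ and $n\geq 2g-1$ for $n\geq 1$, Riemann--Roch yields $\dim_{\FF_q}L(n\infty)=n$ for every $n\geq 1$. Choose $x\in L(2\infty)\setminus\FF_q$ and $y\in L(3\infty)\setminus(\FF_q+\FF_q x)$; then $x$ has a pole of order exactly $2$ at $\infty$ and $y$ of order exactly $3$. The seven functions $1,x,y,x^2,xy,x^3,y^2$ all lie in the $6$-dimensional space $L(6\infty)$ and so admit a non-trivial linear relation. Only $x^3$ and $y^2$ have pole of order exactly $6$ at $\infty$, which forces both their coefficients to be non-zero, and their leading pole terms to cancel. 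After rescaling $x,y$ one normalizes these coefficients to $-1$ and $1$, yielding a Weierstrass equation $F(x,y)=0$. Finally $[K:\FF_q(x)]=2$ (the degree of the pole divisor of $x$) and $y\notin\FF_q(x)$ force $K=\FF_q(x,y)$, while smoothness of the affine model follows from $g(K)=1$, since a singular plane cubic has geometric genus $0$.

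For part (ii), I would set up the Abel--Jacobi map
$$\Phi\colon E(\FF_q)\longrightarrow\Cl^0(K),\qquad P\mapsto[P-\infty],$$
using the fact that degree-$1$ places of $K$ correspond bijectively to $\FF_q$-rational points of the Weierstrass model, with the place $\infty$ corresponding to the point at infinity. Injectivity follows from Riemann--Roch: $[P-\infty]=[Q-\infty]$ with $P\neq Q$ would produce a non-constant function in $L(Q)$, contradicting $\dim L(Q)=1$. Surjectivity: for any $[D]\in\Cl^0(K)$, Riemann--Roch gives $\dim L(D+\infty)=1$, so $[D+\infty]$ contains a unique effective divisor, necessarily a single degree-$1$ place $P$, and $[D]=[P-\infty]$.

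The step requiring the most care is identifying the chord--tangent law on $E(\FF_q)$ with the group law transported from $\Cl^0(K)$ via $\Phi$. For this I would observe that any linear form $\ell(x,y)=\alpha x+\beta y+\gamma$ belongs to $L(3\infty)$, and if it cuts the curve in three affine points $P,Q,R$ (counted with multiplicity) then its divisor in $K$ is $(\ell)=P+Q+R-3\infty$, whence
$$[P-\infty]+[Q-\infty]+[R-\infty]=0\quad\text{in }\Cl^0(K).$$
Similarly, the vertical line through $P$ and its reflection $P^{-}$ gives $[P-\infty]+[P^{-}-\infty]=0$. Combined with the geometric definition $P\oplus Q:=\text{(reflection of the third intersection point)}$, these identities say exactly that $\Phi$ is a group homomorphism, and hence a group isomorphism by the bijection already established. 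The delicate part is the uniform treatment of the tangent, flex, and ``point at infinity'' cases, but each follows from the same principle applied to $\ell$ of the appropriate pole order, with multiplicities correctly tracked.
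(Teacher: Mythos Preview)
Your argument is correct and is precisely the classical Riemann--Roch derivation of the Weierstrass model together with the Abel--Jacobi identification of $\Cl^0(K)$ with $E(\FF_q)$. The paper itself does not give an independent proof: it simply cites Stichtenoth \cite[Propositions 6.1.2, 6.1.6, 6.1.7]{Stich}, and what you have written is essentially the content of those propositions. So your approach and the paper's (by reference) coincide; you have merely unpacked what the paper leaves to the literature.
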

\begin{proof} For (i) see \cite[Proposition 6.1.2]{Stich}. For (ii) see \cite[Propositions 6.1.6, 6.1.7]{Stich}.
\end{proof}
\noindent Here a rational point $(a,b) \in E(\FF_q)$ corresponds to the ideal class of $A(x-a)+A(y-b)$.
\noindent We also require some ``elliptic" properties of $\widetilde{K}=K.\FF_{q^2}$ (which is a \it constant field extension \rm of $K$).

\begin{Cor} Suppose that $K/\FF_q$ is elliptic. Then $\widetilde{K}/\FF_{q^2}$ is also elliptic and defined by the same Weierstrass equation.
\end{Cor}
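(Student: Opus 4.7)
The plan is to verify the two defining properties of an elliptic function field for $\widetilde{K}/\FF_{q^2}$, namely that it has genus $1$ and possesses a place of degree $1$, and to check that the same Weierstrass polynomial $F(x,y)$ continues to define it. The construction $\widetilde{K}=K.\FF_{q^2}$ is a constant field extension, so all three claims will follow from standard facts about such extensions, which I would invoke from Stichtenoth, Chapter 3.

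First I would recall that the full constant field of $\widetilde{K}$ is indeed $\FF_{q^2}$ (not something larger), because $\FF_{q^2}/\FF_q$ is separable and $\FF_q$ is algebraically closed in $K$; this is Stichtenoth's Theorem on constant field extensions. For the genus, the same theorem gives $g(\widetilde{K})=g(K)=1$, since the genus is preserved under separable constant field extensions. For the degree-$1$ place, I would apply the decomposition law for a place of $K$ in a constant field extension: a place of degree $d$ in $K$ splits into $\gcd(d,n)$ places of $\widetilde{K}$, each of degree $d/\gcd(d,n)$ over $\FF_{q^{n}}$, when passing from $\FF_q$ to $\FF_{q^n}$. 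Taking $d=1$ and $n=2$ for the place $\infty$, we obtain a unique place $\widetilde{\infty}$ of $\widetilde{K}$ lying over $\infty$, also of degree $1$. (This is already implicitly used in Lemma 4.10, where it is asserted that $\infty$ is inert in $\widetilde{K}$.)

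Finally, since $\widetilde{K} = K.\FF_{q^2} = \FF_q(x,y).\FF_{q^2} = \FF_{q^2}(x,y)$, the same generators $x,y$ satisfying $F(x,y)=0$ also generate $\widetilde{K}$ over $\FF_{q^2}$, and the smoothness of the Weierstrass model is preserved: the discriminant of $F$ lies in $\FF_q^*$ and a fortiori in $\FF_{q^2}^*$, so the curve is still smooth. Hence $\widetilde{K}/\FF_{q^2}$ is elliptic with the same defining equation.

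The only point requiring genuine care will be ensuring that $\FF_{q^2}$ really is the full constant field of $\widetilde{K}$ and that the place $\infty$ does not split, since the decomposition formula takes a slightly different shape when $d$ and $n$ share a common factor; here the choice $d=1$ makes the argument automatic, but the statement relies essentially on $\infty$ having degree $1$ in $K$. The rest is invocation of the standard machinery on constant field extensions of function fields.
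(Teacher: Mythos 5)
Your proof is correct, and it is somewhat more self-contained than the paper's. The paper's entire argument is two sentences: it observes, exactly as you do, that $\widetilde{K}=K.\FF_{q^2}=\FF_{q^2}(x,y)$ with $x,y$ still satisfying $F(x,y)=0$, and then simply cites Stichtenoth, Proposition 6.1.3, which states directly that a constant field extension of an elliptic function field is again elliptic. You instead re-derive that proposition from the general machinery of constant field extensions in Stichtenoth's Chapter 3: preservation of the genus, the fact that $\FF_{q^2}$ is the full constant field of $\widetilde{K}$ (which does require the separability/perfectness point you flag), and the decomposition law giving a unique degree-$1$ place of $\widetilde{K}$ above $\infty$. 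All three steps are correctly invoked, and your remark that this last point is consistent with $\infty$ being inert in $\widetilde{K}/K$ (relative residue degree $2$, but absolute degree $1$ over the new constant field $\FF_{q^2}$) is exactly right and worth making explicit. The trade-off is the usual one: the paper's citation is shorter, while your unpacking makes visible precisely which hypotheses (degree of $\infty$ equal to $1$, finiteness of the constant field) the conclusion depends on.
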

\begin{proof} From the above $\widetilde{K}=\FF_{q^2}(x,y)$, where $F(x,y)=0$. The rest follows from \cite[Proposition 6.1.3]{Stich}.
\end{proof} 
\noindent With our choice of infinite place we have 
$$A=\FF_q[x,y]\;\;\mathrm{and}\;\; \widetilde{A}=\FF_{q^2}[x,y],$$
where $x$ and $y$ satisfy the Weierstrass equation $F(x,y)=0$. In an analagous way $$\mathrm{Cl}(\widetilde{A}) \cong \mathrm{Cl}^0(\widetilde{K}) \cong E(\FF_{q^2}).$$

\noindent We recall that the image of any $\alpha \in \FF_{q^2}$ under the Galois automorphism of $\FF_{q^2}/\FF_q$ is denoted by $\overline{\alpha}$. For each rational point $P=(\alpha,\beta) \in E(\FF_{q^2})$ we put $\overline{P}=(\overline{\alpha},\overline{\beta})$.
\begin{Cor} Suppose that $K/ \FF_q$ is elliptic. Under the identifications of $\Cl^0(\widetilde{K})$ (resp. $\Cl^0(K)$) with $E(\FF_{q^2})$ (resp. $E(\FF_q)$) the norm map $N:\Cl^0(\widetilde{K}) \rightarrow \Cl^0(K)$
translates to a map $N_E: E(\FF_{q^2}) \rightarrow E(\FF_q)$ defined by
$$N_E(P)=P \oplus \overline{P},$$
so that $$P \in \ker N_E \Leftrightarrow \overline{P}=-P.$$
\end{Cor}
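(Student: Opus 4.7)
The plan is to express the norm map at the level of divisors and then translate via the Abel--Jacobi identification $P \leftrightarrow [P-\infty]$. The key classical fact I will use is that for a Galois extension of function fields with group $\Sigma$, the norm on divisor (and hence divisor-class) groups is induced by $D \mapsto \sum_{\sigma\in\Sigma}\sigma(D)$. Here $|\Sigma|=2$ and the non-trivial element $\sigma$ acts on $\widetilde{K}=K\cdot\FF_{q^2}$ as the Frobenius $\alpha\mapsto\alpha^q$ on constants while fixing $x$ and $y$; so it sends the degree-one place of $\widetilde{K}$ cut out by $(x-\alpha,y-\beta)$ to the one cut out by $(x-\bar\alpha,y-\bar\beta)$. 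Under the identification $\Cl^0(\widetilde{K})\leftrightarrow E(\FF_{q^2})$ this is exactly the involution $P\mapsto\bar P$ already introduced for $\widetilde{A}$.

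With this compatibility established I would compute
\[
N\bigl([P-\infty]\bigr) \;=\; [P-\infty]+[\bar P-\infty] \;=\; [P+\bar P-2\infty],
\]
and observe that the right-hand side is $\sigma$-invariant and therefore descends to a well-defined class in $\Cl^0(K)$. The remaining task is to rewrite this class in the canonical form $[Q-\infty]$ with $Q\in E(\FF_q)$. For this I would invoke the chord--tangent construction that defines $\oplus$: the line through $P$ and $\bar P$ (or the tangent at $P$ when $P=\bar P\in E(\FF_q)$) meets $E$ in a third point $R$, producing the principal divisor $P+\bar P+R-3\infty$, whence $[P+\bar P-2\infty]=-[R-\infty]$. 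By the definition of the group law $P\oplus\bar P=-R$, so $-[R-\infty]=[(P\oplus\bar P)-\infty]$ and the first assertion $N_E(P)=P\oplus\bar P$ follows.

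The kernel characterisation is then immediate: $N_E(P)=\infty$, the identity of $E(\FF_q)$, is equivalent to $P\oplus\bar P=\infty$, which by the definition of the elliptic group law says exactly $\bar P=-P$. The only matter requiring attention, and which I would address at the outset, is the compatibility between the ideal-class norm $\overline{N} : \Cl(\widetilde{A})\to\Cl(A)$ used in the earlier sections and the divisor-class norm $N : \Cl^0(\widetilde{K})\to\Cl^0(K)$ featuring in the present statement; but since $\infty$ has degree $1$ and extends to a single degree-$1$ place of $\widetilde{K}$ (as the extension is a constant-field extension of degree $2$), the canonical isomorphisms $\Cl(A)\cong\Cl^0(K)$ and $\Cl(\widetilde{A})\cong\Cl^0(\widetilde{K})$ intertwine the two norms, so no additional technical work is required.
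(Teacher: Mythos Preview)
Your argument is correct. The paper, however, states this corollary without proof: it is treated as an immediate consequence of the standard facts just recalled (the Abel--Jacobi isomorphism $E(\FF_{q^2})\cong\Cl^0(\widetilde{K})$ of \cite[Proposition~6.1.7]{Stich} and the description of the norm for a constant field extension), so there is no ``paper's proof'' to compare against. What you have written is precisely the routine verification the authors leave to the reader: identify the norm with $D\mapsto D+\sigma(D)$ for the Galois involution $\sigma$, observe that $\sigma$ acts as $P\mapsto\overline{P}$ on rational points, and then use the defining property of $\oplus$ (that $[P-\infty]+[Q-\infty]=[(P\oplus Q)-\infty]$) to conclude. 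Your remark on the compatibility of the ideal-class and divisor-class norms is also apt; note that the relevant point is that $\infty$ has degree~$1$ (so $\Cl(A)\cong\Cl^0(K)$ and likewise over $\widetilde{K}$), which you state correctly.
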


\noindent Takahashi \cite{Takahashi} has described in detail the quotient graph for an elliptic function field over \it any \rm field of constants. In all cases $G \backslash \TTT$ is a \it tree. \rm Since $\delta=1$, for the case of a finite field of constants, the isolated vertices of $G \backslash \TTT$ are precisely those whose stabilizer is isomorphic to $GL_2(\FF_q)$ or $\FF_{q^2}^*$ by \cite[Theorem 5.1]{MSstabilizer}. For each cusp $c \in \Cl(A)_2$ the cuspidal ray $\mathcal{R}(c)$ in $G \backslash \TTT$ has attached to its terminal vertex (appearing as a ``spike") an isolated vertex with stabilizer isomorphic to $GL_2(\FF_q)$. The remaining cuspidal rays
consist of $\frac{1}{2}|\Cl(A) \backslash \Cl(A)_2|$ inverse pairs $\{\mathcal{R}(c),\mathcal{R}(c^{-1})\}$ which share a terminal vertex (appearing  in $G \backslash \TTT$ as the ``prongs" of a ``fork").\\ 
\noindent In both our examples $q=7$ in which case the Weierstrass equation can be assumed to take the short form
$$y^2=f(x)=x^3+ax+b,$$
\noindent where $a,b \in \FF_q$ and $f(x)$ has no repeated roots.

\begin{Exm}\label{x3-3x}  $K=\FF_7(x,y),\; A=\FF_7[x,y]$ with $y^2=x^3-3x$  \\ 
\noindent It can be easily shown that 
\begin{center} $ E(\FF_7)=\{(\infty,\infty),(0,0),(2,\pm 3),(3, \pm 2),(6, \pm 3)\}.$
\end{center}
\noindent Since $E$ is in short Weierstrass form the $8$ points are listed as (additive) inverse pairs. In particular $(0,0)$ is the only such $2$-torsion point. It follows that $Quinn(G) \cong \Cl(A)_2 \cong \ZZ/2\ZZ$ and hence that $\Cl(A) \cong \ZZ/8\ZZ$. Let $\kappa$ be a non-trivial quasi-inner automorphism of $G$ representing the non-trivial element of $Quinn(G)$. In $E(\FF_7)$ $\kappa$ is represented by $(0,0)$ and, by Theorem \ref{multiplicationcusp}, its action on $\Cusp(G)$ is determined by its action (via point addition $\oplus$) in $E(\FF_7)$. In a diagram of $G \backslash \TTT$ as described in \cite{Takahashi} we wish to ensure that its involution provided by $\kappa$, Corollary \ref{Serreeven}, is given by the reflection in the vertical axis. We begin by labelling appropriately its $8$ cuspidal rays (corresponding to $E(\FF_7)$). By Corollary \ref{actiononcusps} $\kappa$ acts freely on these. By Corollary \ref{Serrecor} (i) it
is clear that $\kappa$ interchanges the cusps $(\infty,\infty)$ and $(0,0)$. Attached to each of these is a "spike" consisting of an isolated vertex whose stabilizer is isomorphic to $GL_2(\FF_7)$. Since $\kappa$ is a graph automorphism it interchanges these vertices, namely, $g_1$ and $g_2$. By means of the duplication formula 
\cite[p.53]{Silver} it is easily checked that the rational $4$-torsion points are $(2, \pm 3)$. Then $\kappa$ interchanges $(2,3)$ and $(2,-3)$ by Lemma \ref{order4cusp}. For the remaining cusps $\kappa$ interchanges $(3, \pm 2)$ and $(6, \pm 3)$. To make this more precise we use the addition formulae \cite[p.53]{Silver} which show that $(0,0) \oplus (3,2)=(6,3)$. Hence $\kappa$ interchanges $(3,2)$ and $(6,3)$ by Theorem \ref{multiplicationcusp}. \\ 
\noindent There remain the isolated vertices $1$, $4$ and $5$ each of whose stabilizer is isomorphic to $\FF_{49}^*$. We deal with these via their connection with elliptic points. We recall from Theorem \ref{norm}
and the above that there exists a one-one correspondence
$$\Ell(G) \leftrightarrow \ker N_E=\{ (\alpha,\beta) \in E(\FF_{49}): (\overline{\alpha},\overline{\beta})=(\alpha,-\beta)\},$$ 
\noindent since the Weierstrass equation is in short form.
 \noindent Now let $i$ denote one of the $2$ square roots of $-1$ in $\FF_{q^2}^*$. Then
 $$N_E=\{(\rho,\epsilon i) \in E(\FF_{49}): \rho,\epsilon \in \FF_q\}.$$
 \noindent We conclude then that
 $$\Ell(G) \leftrightarrow \{(\infty,\infty), (0,0),(1,\pm3i), (4, \pm 2i), (5, \pm 3i)\}.$$

 \noindent Here $\Ell(G)$ is identified with a subgroup of $E(\FF_{49})$ listed as (additive) inverse pairs. Since there is only one $2$-torsion point $\Ell(G) \cong \ZZ/ 8\ZZ$. (In this case $|\Cl(A)|=|\Ell(G)|$. However this not a general feature. For this particular $K$ its $L$-polynomial is $L_K(t)=1+7t^2$ so that $L_K(1)=L_K(-1)$.)
 \noindent As with $\Cusp(G)$ the \it free \rm action (Corollary \ref{freeelli}) of $Quinn(G)$ on $\Ell(G)$ is represented by the action of $(0,0)$ in $N_E$ (by point addition). \\
 \noindent By identifications in Section $4$ the pairs $(1,\pm 3i),(4, \pm 2i), (5, \pm 3i)$ correspond with the vertices $1$, $4$ and $5$, respectively. By means of the duplication formula it is readily verified that the two points of order $4$ in $\Ell(G)$ are $5 \pm 3i$. By Lemma \ref{order4} it follows that $\kappa$ fixes vertex $5$ and that $\kappa$ interchanges vertices $1,4$. For a more precise version of the latter statement we note that $(0,0)\oplus(1,3i)=(4,2i)$ and so $(0,0)\oplus(1,-3i)=(4,-2i)$. \\
 \noindent It is of interest to use Theorem \ref{Misquasi} to construct a matrix $M$ which represents $\kappa$.
 We begin with the $A$-ideal, $Ax+Ay$ whose square is $Ax$. In determining a possible $M$ we recall from the proof of Theorem \ref{isoto2tors} the observation of Cremona \cite{Cremona} that every row and column of $M$ generates $\qq(M)$. Two possibilities which arise are 
 $$ M= \left[\begin{array}{cc}y&x^2\\x&y\end{array}\right]\;\mathrm{and}\; \left[\begin{array}{cc}y&-x^2\\x&-y\end{array}\right].$$

\noindent The latter is simpler since its square is a scalar matrix.

\newpage
 
{\unitlength0.5cm
\begin{picture}(0,26)
\thicklines
\put(14,6){\circle{1.0}\makebox(-2,0){c}}
\put(14,10){\circle{1.0}\makebox(-2,0){5}}
\put(14,2){\circle{1.0}\makebox(-2,0){2}}
\put(11.5,8.5){\circle{1.0}\makebox(-2,0){$\infty$}}
\put(16.5,8.5){\circle{1.0}\makebox(-2,0){0}}
\put(10,6){\circle{1.0}\makebox(-2,0){3}}
\put(18,6){\circle{1.0}\makebox(-2,0){6}}
\put(9,11){\circle{1.0}\makebox(-2,0){}}
\put(19,11){\circle{1.0}\makebox(-2,0){}}
\put(11.5,3.5){\circle{1.0}\makebox(-2,0){1}}
\put(16.5,3.5){\circle{1.0}\makebox(-2,0){4}}
\put(23,11){\circle{1.0}\makebox(-2,0){$g_1$}}
\put(5,11){\circle{1.0}\makebox(-2,0){$g_2$}}
\put(9,15){\circle{1.0}\makebox(-2,0){}}
\put(19,15){\circle{1.0}\makebox(-2,0){}}
\put(9,19){\circle{1.0}\makebox(-2,0){}}
\put(19,19){\circle{1.0}\makebox(-2,0){}}

\put(21.5,7.2){\circle{1.0}\makebox(-2,0){}}
\put(21.5,4.8){\circle{1.0}\makebox(-2,0){}}
\put(25,8.4){\circle{1.0}\makebox(-2,0){}}
\put(25,3.6){\circle{1.0}\makebox(-2,0){}}

\put(6.5,7.2){\circle{1.0}\makebox(-2,0){}}
\put(6.5,4.8){\circle{1.0}\makebox(-2,0){}}
\put(3,8.4){\circle{1.0}\makebox(-2,0){}}
\put(3,3.6){\circle{1.0}\makebox(-2,0){}}

\put(15.2,-1){\circle{1.0}\makebox(-2,0){}}
\put(12.8,-1){\circle{1.0}\makebox(-2,0){}}
\put(16.4,-4){\circle{1.0}\makebox(-2,0){}}
\put(11.6,-4){\circle{1.0}\makebox(-2,0){}}

\put(14.5,6){\line(1,0){3}}
\put(10.5,6){\line(1,0){3}}
\put(5.5,11){\line(1,0){3}}
\put(19.5,11){\line(1,0){3}}

\put(14,6.5){\line(0,1){3}}
\put(14,2.5){\line(0,1){3}}
\put(9,11.5){\line(0,1){3}}
\put(9,15.5){\line(0,1){3}}
\put(19,11.5){\line(0,1){3}}
\put(19,15.5){\line(0,1){3}}

\put(14.3,6.4){\line(1,1){1.8}}
\put(16.8,8.9){\line(1,1){1.8}}
\put(13.6,6.3){\line(-1,1){1.8}}
\put(11.1,8.8){\line(-1,1){1.8}}
\put(11.8,3.9){\line(1,1){1.8}}
\put(14.3,5.6){\line(1,-1){1.8}}

\put(18.5,6.2){\line(3,1){2.5}}
\put(22,7.4){\line(3,1){2.5}}
\put(18.5,5.8){\line(3,-1){2.5}}
\put(22,4.6){\line(3,-1){2.5}}

\put(3.5,3.7){\line(3,1){2.5}}
\put(7,4.9){\line(3,1){2.5}}
\put(3.5,8.3){\line(3,-1){2.5}}
\put(7,7){\line(3,-1){2.5}}

\put(14.3,1.6){\line(1,-3){0.7}}
\put(15.4,-1.5){\line(1,-3){0.7}}
\put(13,-0.5){\line(1,3){0.7}}
\put(11.9,-3.6){\line(1,3){0.7}}

\put(9,20){\circle{.1}}\put(9,20.5){\circle{.1}}\put(9,21){\circle{.1}}
\put(19,20){\circle{.1}}\put(19,20.5){\circle{.1}}\put(19,21){\circle{.1}}
\put(25.9,8.7){\circle{.1}}\put(26.5,8.9){\circle{.1}}\put(27.1,9.1){\circle{.1}}
\put(25.9,3.2){\circle{.1}}\put(26.5,3){\circle{.1}}\put(27.1,2.8){\circle{.1}}
\put(2.2,8.7){\circle{.1}}\put(1.6,8.9){\circle{.1}}\put(1,9.1){\circle{.1}}
\put(2.2,3.2){\circle{.1}}\put(1.6,3){\circle{.1}}\put(1,2.8){\circle{.1}}
\put(16.7,-5){\circle{.1}}\put(16.9,-5.6){\circle{.1}}\put(17.1,-6.2){\circle{.1}}
\put(11.3,-5){\circle{.1}}\put(11.1,-5.6){\circle{.1}}\put(10.9,-6.2){\circle{.1}}

\put(9,23){\makebox(0,0){$(\infty,\infty)$}}
\put(19,23){\makebox(0,0){$(0,0)$}}
\put(-1,9.7){\makebox(0,0){$(3,2)$}}
\put(-1,2.3){\makebox(0,0){$(3,-2)$}}
\put(29,9.7){\makebox(0,0){$(6,3)$}}
\put(29,2.3){\makebox(0,0){$(6,-3)$}}
\put(10.5,-7.6){\makebox(0,0){$(2,3)$}}
\put(17.8,-7.6){\makebox(0,0){$(2,-3)$}}

\end{picture}}

\end{Exm}
\newpage

\begin{Exm}
 $K=\FF_7(x,y),\; A=\FF_7[x,y]$ with $y^2 =x^3 -x$ \\
 \noindent It is easily verified that $$E(\FF_7)=\{(\infty,\infty),(0,0),(1,0),(6,0),(4, \pm 2),(5,\pm 1)\},$$
 \noindent listed as (additive) inverse pairs. The $2$-torsion points are $(0,0),(1,0),(6,0)$  and so 
$Quinn(G) \cong \Cl(A)_2 \cong \ZZ /2\ZZ \oplus \ZZ/2\ZZ$ and 
\noindent $\Cusp(G) \cong \Cl(A) \cong \ZZ/2\ZZ \oplus \ZZ/ 4\ZZ$. 
\noindent Let the non-trivial quasi-inner automorphisms $\kappa_0,\kappa_1,\kappa_6$ represent $(0,0),(1,0),(6,0)$, respectively, where $\kappa_0=\kappa_1\kappa_6$. In the diagram representing $G \backslash \TTT$
 we label the $8$ cusps with the above rational points in such a way that (i) the action of $\kappa_6$ is the reflection about the vertical axis (ii) the action of $\kappa_1$ is reflection about the horizontal axis and (iii) (consequently) the action of $\kappa_0$ is a rotation of $180$ degrees about the ``central" vertex $c$. \\
 \noindent There are $4$ vertices whose stabilizers are isomorphic to $GL_2(\FF_7)$ which appear as ``spikes" attached to the $4$ cusps given by the $2$-torsion points in $E(\FF_7)$ and so  $\kappa_6$,   $\kappa_1$ and $\kappa_0$ interchange the vertex pairs $\{g_1,g_2\}$, $\{g_1, g_4\}$ and $\{g_1,g_3\}$, respectively.\\
 \noindent In $\Cl(A)$ there are $4$ points of order $4$, namely $(4, \pm 2)$ and $(5, \pm 1)$, and it is easily verified that the square of each is $(1,0)$. By Lemma \ref{order4cusp} it follows that $\kappa_1$ interchanges the cusps $(4,2), (4,-2)$ as well as $(5,1),(5,-1)$. On the other hand $\kappa_6$ interchanges the pairs $(4, \pm 2)$ and $(5, \pm 1)$. In more detail $\kappa_6$ maps $(5,1)$ to $(4,-2)$, since $(6,0)\oplus (5,1)=(4,-2)$. \\
 \noindent There remain two vertices $2$ and $3$ whose stabilizers are cyclic order $q^2-1$. As in the previous example we consider the elliptic function field $\widetilde{K}=K.\FF_{49}=\FF_{49}(x,y):y^2=x^3-x$. As before let $i$ denote one of the square roots of $-1$ in $\FF_{49}$. It can be verified that 
 $$ \Ell(G) \leftrightarrow N_E= \{(\infty,\infty),((0,0),((1,0),(6,0),(2, \pm i),(3,\pm 2i)\},$$
 \noindent listed as additive inverse pairs in $E(\FF_{49})$.
 \noindent As before $|\Cl(A)|=|\Ell(G)|=8$. (Again this is purely coincidental because $L_K(t)=1+7t^2$.) By correspondences discussed in Section 4 the $2$ vertices of interest here correspond to the pairs $(2, \pm i)$ and $(3, \pm 2i)$. It is easily verified that the squares of all $4$ of these points are $(6,0)$. It follows from Lemma \ref{order4} that $\kappa_6$ fixes $2$ and $3$. On the other hand $(1,0)\oplus (2,i)=(3,-2i)$ and so $\kappa_1$ interchanges $2$ and $3$.
 \noindent Finally using Theorem \ref{Misquasi} the following matrices $M_0,\;M_1,\;M_6=M_0M_1$ represent 
 $\kappa_0, \kappa_1, \kappa_6$, respectively,
  $$ M_0= \left[\begin{array}{cc}y&-x^2\\x&-y\end{array}\right]\;\mathrm{and}\; M_1=\left[\begin{array}{cc}y&-(x-1)(x+2)\\x-1&-y\end{array}\right].$$

 \newpage
 
{\unitlength0.5cm
\begin{picture}(0,26)
\thicklines
\put(14,6){\circle{1.0}\makebox(-2,0){c}}
\put(14,10){\circle{1.0}\makebox(-2,0){2}}
\put(14,2){\circle{1.0}\makebox(-2,0){3}}
\put(11.5,8.5){\circle{1.0}\makebox(-2,0){$\infty$}}
\put(16.5,8.5){\circle{1.0}\makebox(-2,0){6}}
\put(10,6){\circle{1.0}\makebox(-2,0){4}}
\put(18,6){\circle{1.0}\makebox(-2,0){5}}
\put(9,11){\circle{1.0}\makebox(-2,0){}}
\put(19,11){\circle{1.0}\makebox(-2,0){}}
\put(11.5,3.5){\circle{1.0}\makebox(-2,0){1}}
\put(9,1){\circle{1.0}\makebox(-2,0){}}
\put(5,1){\circle{1.0}\makebox(-2,0){$g_4$}}
\put(9,-3){\circle{1.0}\makebox(-2,0){}}
\put(9,-7){\circle{1.0}\makebox(-2,0){}}

\put(16.5,3.5){\circle{1.0}\makebox(-2,0){0}}
\put(19,1){\circle{1.0}\makebox(-2,0){}}
\put(23,1){\circle{1.0}\makebox(-2,0){$g_3$}}
\put(19,-3){\circle{1.0}\makebox(-2,0){}}
\put(19,-7){\circle{1.0}\makebox(-2,0){}}

\put(23,11){\circle{1.0}\makebox(-2,0){$g_2$}}
\put(5,11){\circle{1.0}\makebox(-2,0){$g_1$}}
\put(9,15){\circle{1.0}\makebox(-2,0){}}
\put(19,15){\circle{1.0}\makebox(-2,0){}}
\put(9,19){\circle{1.0}\makebox(-2,0){}}
\put(19,19){\circle{1.0}\makebox(-2,0){}}

\put(21.5,7.2){\circle{1.0}\makebox(-2,0){}}
\put(21.5,4.8){\circle{1.0}\makebox(-2,0){}}
\put(25,8.4){\circle{1.0}\makebox(-2,0){}}
\put(25,3.6){\circle{1.0}\makebox(-2,0){}}

\put(6.5,7.2){\circle{1.0}\makebox(-2,0){}}
\put(6.5,4.8){\circle{1.0}\makebox(-2,0){}}
\put(3,8.4){\circle{1.0}\makebox(-2,0){}}
\put(3,3.6){\circle{1.0}\makebox(-2,0){}}

\put(14.5,6){\line(1,0){3}}
\put(10.5,6){\line(1,0){3}}
\put(5.5,11){\line(1,0){3}}
\put(19.5,11){\line(1,0){3}}
\put(5.5,1){\line(1,0){3}}
\put(19.5,1){\line(1,0){3}}

\put(14,6.5){\line(0,1){3}}
\put(14,2.5){\line(0,1){3}}
\put(9,11.5){\line(0,1){3}}
\put(9,15.5){\line(0,1){3}}
\put(19,11.5){\line(0,1){3}}
\put(19,15.5){\line(0,1){3}}
\put(9,-6.5){\line(0,1){3}}
\put(9,-2.5){\line(0,1){3}}
\put(19,-6.5){\line(0,1){3}}
\put(19,-2.5){\line(0,1){3}}

\put(14.3,6.4){\line(1,1){1.8}}
\put(16.8,8.9){\line(1,1){1.8}}
\put(13.6,6.3){\line(-1,1){1.8}}
\put(11.1,8.8){\line(-1,1){1.8}}
\put(11.8,3.9){\line(1,1){1.8}}
\put(9.3,1.4){\line(1,1){1.8}}
\put(14.3,5.6){\line(1,-1){1.8}}
\put(16.8,3.1){\line(1,-1){1.8}}

\put(18.5,6.2){\line(3,1){2.5}}
\put(22,7.4){\line(3,1){2.5}}
\put(18.5,5.8){\line(3,-1){2.5}}
\put(22,4.6){\line(3,-1){2.5}}

\put(3.5,3.7){\line(3,1){2.5}}
\put(7,4.9){\line(3,1){2.5}}
\put(3.5,8.3){\line(3,-1){2.5}}
\put(7,7){\line(3,-1){2.5}}

\put(9,20){\circle{.1}}\put(9,20.5){\circle{.1}}\put(9,21){\circle{.1}}
\put(19,20){\circle{.1}}\put(19,20.5){\circle{.1}}\put(19,21){\circle{.1}}
\put(9,-8){\circle{.1}}\put(9,-8.5){\circle{.1}}\put(9,-9){\circle{.1}}
\put(19,-8){\circle{.1}}\put(19,-8.5){\circle{.1}}\put(19,-9){\circle{.1}}
\put(25.9,8.7){\circle{.1}}\put(26.5,8.9){\circle{.1}}\put(27.1,9.1){\circle{.1}}
\put(25.9,3.2){\circle{.1}}\put(26.5,3){\circle{.1}}\put(27.1,2.8){\circle{.1}}
\put(2.2,8.7){\circle{.1}}\put(1.6,8.9){\circle{.1}}\put(1,9.1){\circle{.1}}
\put(2.2,3.2){\circle{.1}}\put(1.6,3){\circle{.1}}\put(1,2.8){\circle{.1}}

\put(9,23){\makebox(0,0){$(\infty,\infty)$}}
\put(19,23){\makebox(0,0){$(6,0)$}}
\put(9,-11){\makebox(0,0){$(1,0)$}}
\put(19,-11){\makebox(0,0){$(0,0)$}}
\put(-1,9.7){\makebox(0,0){$(4,2)$}}
\put(-1,2.3){\makebox(0,0){$(4,-2)$}}
\put(29,9.7){\makebox(0,0){$(5,-1)$}}
\put(29,2.3){\makebox(0,0){$(5,1)$}}

\end{picture}}
\end{Exm}

\newpage


\begin{thebibliography}{alpha}

\bibitem[Cr]{Cremona} J.~E.~Cremona: {\em On $\mathrm{GL}(n)$ of a Dedekind domain,} 
{ Quart. J. Math. Oxford Ser. $(2)$ {\bf 39} (1988), 423-426}

\bibitem[Dr]{Drinfeld} V.~G.~Drinfeld: {\em Elliptic Modules,} 
{ Math. USSR-Sbornik {\bf 23} (1976), 561-592}

\bibitem[Ge]{Gekeler} E.-U.~Gekeler: {\em Drinfeld Modular Curves,} 
{Springer LNM 1231, Berlin Heidelberg New York, 1986}

\bibitem[KMS]{KMS} R.~K\"ohl, B.~M\"uhlherr and K. ~Struyve: {\em Quotients of trees for arithmetic subgroups of $\mathrm{PGL}_2$ over a rational function field,} {J. Group Theory {\bf 18} (2015), 61-74}

\bibitem[Ma1]{MasonTAMS} A.~W.~Mason: {\em Serre's generalization of Nagao's theorem: an elementary approach,}
{Trans. Amer. Math. Soc. {\bf 353} (2001), 749-767}

\bibitem[Ma2]{MasonJGT} A.~W.~Mason: {\em The generalization of Nagao's theorem to other subrings of the rational function field,} {Comm. Algebra {\bf 31} (2003), 5199-5242}

\bibitem[MS1]{MSJLMS} A.~W.~Mason and A.~Schweizer: {\em The minimum index of a non-congruence subgroup 
of $SL_2$ over an arithmetic domain II: the rank zero cases,}
{J. London Math. Soc. (2) {\bf 71} (2005), 53-68}

\bibitem[MS2]{MSJPAA} A.~W.~Mason and A.~Schweizer: {\em Non-standard automorphisms and non-congruence 
subgroups of $SL_2$ over Dedekind domains contained in function fields,}
{J. Pure Appl. Algebra {\bf 205} (2006), 189-209}

\bibitem[MS3]{MSstabilizer} A.~W.~Mason and A.~Schweizer: {\em The stabilizers in a Drinfeld modular group of the 
vertices of its Bruhat-Tits tree: an elementary approach,}
{Int. J. Algebra Comp. {\bf 23} (2013), 1653-1683}

\bibitem[MS4]{MSelliptic} A.~W.~Mason and A.~Schweizer: {\em Elliptic points of the Drinfeld modular groups,}
{Math. Z. {\bf 279} (2015), 1007-1028}


\bibitem[Ro]{Rosen} M.~Rosen: {\em Number Theory in Function Fields,} 
{Springer GTM 210, New York, 2002}

\bibitem[Se]{Serre} J.-P.~Serre: {\em Trees,}
 {Springer, Berlin Heidelberg New York, 1980}

\bibitem[Si]{Silver} J.~H.~Silverman: {\em The Arithmetic of Elliptic Curves, Second Edition}, {Springer GTM 106, Dordrecht, 2009}


\bibitem[St]{Stich} H.~Stichtenoth: {\em Algebraic Function Fields and Codes, Second Edition}, 
{Springer GTM 254, Berlin Heidelberg, 2009}

\bibitem[Ta]{Takahashi} S.~Takahashi: {\em The fundamental domain of the tree of $GL(2)$ over the function field of an elliptic curve,}
{Duke Math. J. {\bf 72} (1993), 85-97}

\end{thebibliography}
\end{document}